\renewcommand{\email}[2][]{%
  \ifx\emails\@empty\relax\else{\g@addto@macro\emails{,\space}}\fi%
  \@ifnotempty{#1}{\g@addto@macro\emails{\textrm{(#1)}\space}}%
  \g@addto@macro\emails{#2}%
}
\definecolor{blackred}{RGB}{183, 24, 82}
\definecolor{lgreen}{rgb}{0.0, 0.48, 0.0}
\definecolor{lpurple}{rgb}{0.48, 0.0, 0.48}
\definecolor{bblue}{rgb}{0.2, 0.4, 0.8}
\renewcommand{\tocsection}[3]{%
  \indentlabel{\@ifnotempty{#2}{\bfseries\ignorespaces#1 #2\quad}}\bfseries#3}
\renewcommand{\tocsubsection}[3]{%
  \indentlabel{\@ifnotempty{#2}{\ignorespaces#1 #2\quad}}#3}
\renewcommand{\theenumi}{\alph{enumi}}%
\Crefname{ALC@unique}{Line}{Lines}
\def\Autoref#1{%
  \begingroup
  \edef\reserved@a{\cpttrimspaces{#1}}%
  \ifcsndefTF{r@#1}{%
    \xaftercsname{\expandafter\testreftype\@fourthoffive}
      {r@\reserved@a}.\\{#1}%
  }{%
    \ref{#1}%
  }%
  \endgroup
}
\def\testreftype#1.#2\\#3{%
  \ifcsndefTF{#1autorefname}{%
    \def\reserved@a##1##2\@nil{%
      \uppercase{\def\ref@name{##1}}%
      \csn@edef{#1autorefname}{\ref@name##2}%
      \autoref{#3}%
    }%
    \reserved@a#1\@nil
  }{%
    \autoref{#3}%
  }%
}
\newcommand{\set}[1]{\{#1\}}
\newcommand{\seq}[1]{\left(#1\right)}
\newcommand{\idx}[1]{\mbox{\underline{\sf #1}}}
\def\vec{\boldsymbol}
\def\n{\overline}
\let\leq\leqslant
\let\geq\geqslant
\DeclareMathOperator*{\Bern}{\mbox{Bern}}
\DeclareMathOperator*{\Seq}{\mbox{\sc Seq}}
\DeclareMathOperator*{\Set}{\mbox{\sc Set}}
\DeclareMathOperator*{\MSet}{\mbox{\sc MSet}}
\DeclareMathOperator*{\PSet}{\mbox{\sc PSet}}
\DeclareMathOperator*{\Cycle}{\mbox{\sc Cyc}}
\DeclareMathOperator*{\Cov}{\mbox{\rm Cov}}
\DeclareMathOperator*{\Np}{\mbox{NP}}
\DeclareMathOperator*{\Rp}{\mbox{RP}}
\DeclareMathOperator*{\Select}{\mbox{\sc Select}}
\DeclareMathOperator*{\epi}{\mbox{\bf epi}}
\newcommand{\CS}[1]{\mathcal{#1}}
\newcommand \dd[1]  { \,\textrm d{#1}                       }   
\theoremstyle{definition}
\newtheorem{theorem}{Theorem}
\newtheorem{proposition}[theorem]{Proposition}
\newtheorem{lemma}[theorem]{Lemma}
\theoremstyle{definition}
\newtheorem{Definition}[theorem]{Definition}
\newtheorem{Example}[theorem]{Example}
\newtheorem{Remark}[theorem]{Remark}
\definecolor{bblue}{rgb}{0.2, 0.4, 0.8}
\definecolor{bgreen}{rgb}{0.2, 0.6, 0.4}
\definecolor{bred}{rgb}{0.8, 0.4, 0.2}
\definecolor{bviolet}{rgb}{0.7, 0.2, 0.7}
\definecolor{blackred}{rgb}{0.6, 0.3, 0.3}
\definecolor{blackblue}{rgb}{0.3, 0.3, 0.6}
\definecolor{bgrey}{rgb}{0.3, 0.3, 0.3}
\tikzset{
  treenode/.style = {align=center, inner sep=0pt, text centered,
    font=\sffamily},
  arn_nn/.style = {treenode, circle, bblue, draw=bblue, 
    fill=bblue!10,
    minimum width=0.5em, minimum height=0.5em
  },
  arn_n/.style = {treenode, circle, bblue, draw=bblue, 
    text width=1.5em, very thick,
    fill=bblue!10},
  arn_g/.style = {treenode, circle, bgreen, draw=bgreen, 
    text width=1.5em, very thick,
    fill=bblue!10},
  arn_r/.style = {treenode, circle, bviolet, draw=bviolet, 
    text width=1.5em, very thick,
    fill=bviolet!10},
  arn_x/.style = {treenode, circle, black, draw=bgrey, 
    text width=1.5em, very thick,
    fill=bgrey!10},
  triangle/.style = {treenode, bred, draw=bred, fill=bred!20, regular polygon, regular polygon
    sides=3, very thick, text width=1.5em }
}
\newcommandx*{\fromto}[7][3=black,4=black,5=0,6=0,7=.9,usedefault]{
    \path ($(#2) !#7! (#1)$) edge [thick,#4,bend right=#5,
    decoration={markings,
        mark=at position 0.99 with{\arrow[thick,#3, rotate=-#6]{>}},
    }, postaction={decorate}
    ] node {} ($(#1) !#7! (#2)$);
}
\newcommand*{\eg}{e.g.\@\xspace}
\newcommand*{\ie}{i.e.\@\xspace}
\begin{document}

\title{Tuning as convex optimisation: a polynomial tuner for multi-parametric
combinatorial samplers}\thanks{Maciej Bendkowski was partially supported by the
Polish National Science Center grant 2016/21/N/ST6/01032 and a French Government
Scholarship awarded by the French Embassy in Poland. Olivier Bodini and Sergey
Dovgal were supported by the French project ANR project MetACOnc,
ANR-15-CE40-0014. The current paper is an extended version
of~\cite{doi:10.1137/1.9781611975062.9} presented at ANALCO'18.}
\author{Maciej Bendkowski${}^1$}
\address[1]{
Theoretical Computer Science Department,
Faculty of Mathematics and Computer Science,
Jagiellonian University, {\L}ojasiewicza 6,
30-348 Krak\'ow, Poland.
}
\email{maciej.bendkowski@gmail.com}
\author{Olivier Bodini${}^2$}
\address[2]{
Institut Galilée,
Université Paris 13,
99 Avenue Jean Baptiste Clément, 93430
Villetaneuse, France.
}
\email{Olivier.Bodini@univ-paris13.fr}
\author{Sergey Dovgal${}^{2}$}
\email{dovgal.alea@gmail.com}
\date{\today}

\maketitle

\begin{abstract}
  Combinatorial samplers are algorithmic schemes devised for the approximate-
and exact-size generation of large random combinatorial structures, such as
context-free words, various tree-like data structures, maps, tilings,
RNA molecules. They can be
adapted to combinatorial specifications with additional parameters, allowing for
a more flexible control over the output profile of parametrised combinatorial
patterns. One can control, for instance, the number of leaves, profile of node
degrees in trees or the number of certain sub-patterns in generated strings.
However, such a flexible control requires an additional and nontrivial tuning
procedure.

Using techniques of convex optimisation, we present an efficient
tuning algorithm for multi-parametric combinatorial specifications.
Our algorithm works in polynomial time in the system description length, the number of tuning parameters, the number of combinatorial classes in the specification, and the logarithm of the total target size.
We demonstrate the effectiveness of our method on a series of practical
examples, including rational, algebraic, and so-called Pólya specifications. We
show how our method can be adapted to a broad range of less typical
combinatorial constructions, including symmetric polynomials, labelled sets and
cycles with cardinality lower bounds, simple increasing trees or substitutions.
Finally, we discuss some practical aspects of our prototype tuner implementation
and provide its benchmark results.
\end{abstract}

{
  \hypersetup{linkcolor=black}
  \tableofcontents
}

\section{Introduction}
Random generation of combinatorial structures forms a prominent research area of
theoretical computer science. Its wide applications include such topics as
simulation of large physical statistical models~\cite{landau2014guide},
automated software testing~\cite{Palka:2011:TOC:1982595.1982615,cgjlv2000} and
counterexample construction for interactive theorem
provers~\cite{10.1007/978-3-319-22102-1_22}, statistical analysis of queueing
networks~\cite{bouillard2014perfect}, RNA design~\cite{hammer2019fixed}, or
network theory, where one of the major challenges is to devise a realistic model
of random graphs reflecting the properties of real-world
networks~\cite{barabasi2016network}.

Given a formal specification defining a set of combinatorial structures, such as
graphs, proteins, or tree-like data structures, we are interested in designing
an efficient algorithmic sampling scheme, generating such structures according
to some prescribed and rigorously controlled distribution. For instance, being
interested in sampling certain plane trees following a \emph{uniform} outcome
distribution, where plane trees with an equal number of nodes share the same
probability of being constructed, we want to obtain a \emph{combinatorial
sampler} satisfying these input requirements.

There exists a number of different sampling techniques in the literature.
Depending on the considered class of structures, there exist different
\emph{ad-hoc} methods, such as the prominent sampler for plane binary trees due
to Rémy~\cite{Remy85}. If no direct sampling technique is applicable, a common
technique is to use \emph{rejection sampling} in which one generates objects
from a larger, yet simpler class and rejects unwanted samples. Although usually
straightforward to implement, rejection sampling may quickly become infeasible,
especially if the rejection rate grows exponentially fast. Another method, quite
popular in physical modelling, are various Monte Carlo Markov Chain algorithms.
If each of the chain states has an equal number of transitions, then the
stationary distribution is in fact uniform. Let us remark that this technique
and its modifications were successfully applied in sampling random walks and
dimer models~\cite{propp1996exact}.

One of the earliest examples of a universal sampling template is Nijenhuis
and Wilf's recursive method~\cite{NijenhuisWilf1978}, later systematised by
Flajolet, Zimmermann and Van Cutsem~\cite{FLAJOLET19941}.  In this approach, the
input specification, as well as the combinatorial structures it defines, are
recursively decomposed into primitive building blocks. Accordingly, sampling
such objects follows closely the recursive structure of their specification. The
generation scheme is split into two stages -- an initial preprocessing phase,
and the proper sampling itself. During the former, a set of decision
probabilities based on a \emph{target size} $n$ is computed and stored for later
use. The probabilities are chosen so to guarantee a uniform distribution among
structures of size $n$ constructed in the latter phase. Consequently, the
sampling process reduces to a series of random decisions following precomputed
distributions, dictating how to compose the output structure.

Although quite general, the recursive method poses considerable practical
limitations. In both phases the designed algorithm can manipulate integers of
size exponential in the target size $n$, turning its effective bit complexity to
$O(n^{3+o(1)})$, compared to $\Theta(n^2)$ arithmetic operations
required. Denise and Zimmermann reduced later the average-case bit complexity of
the recursive method to $O(n \log n)$ in time and $O(n)$ in space using a
certified floating-point arithmetic optimisation~\cite{DenZimm99}. Regardless,
worst-case space bit complexity remained $O(n^2)$ as well as bit complexity for
specifications defining non-algebraic languages. Remarkably, for rational
languages Bernardi and Giménez~\cite{Bernardi2012} recently linked the
floating-point optimisation of Denise and Zimmermann with a specialised
divide-and-conquer scheme reducing further the worst-case space bit complexity
and the average-case time bit complexity to $O(n)$.

For many years, the exact-size sampling paradigm was the \emph{de facto}
standard in combinatorial generation. In many applications, however, such a
precision is not necessary, and the outcome size may fluctuate around some
target value $n$. Such a relaxed paradigm was made possible with the seminal
paper of Duchon, Flajolet, Louchard and Schaeffer who proposed a universal
sampler construction framework of so-called \emph{Boltzmann
samplers}~\cite{DuFlLoSc}. The key idea of their approach is to relax the
previous exact-size setting, and allow for \emph{approximate-size} samplers,
generating structures within a target size window $[(1-\varepsilon) n,
(1+\varepsilon) n]$ centred around some input size $n$. Like in the recursive
method, Boltzmann samplers closely follow the recursive structure of the input
specification. However now, instead of directly manipulating large integers or
floating-point numbers in order to compute respective decision probabilities,
the preprocessing phase uses numerical \emph{oracles} to evaluate systems of
generating functions corresponding to the specified combinatorial structures.

Throughout the years, a series of important extensions and improvements of
Boltzmann samplers was proposed. Let us mention, for instance, linear
approximate-size (and quadratic exact-size) Boltzmann samplers for planar
graphs~\cite{fusy2005quadratic}, general-purpose samplers for unlabelled
structures~\cite{flajolet2007boltzmann}, efficient samplers for plane
partitions~\cite{bodini2010random} or the cycle pointing operator for P\'{o}lya
structures~\cite{bodirsky2011boltzmann}. The framework of Boltzmann samplers was
moreover generalised onto differential
specifications~\cite{bodini2012boltzmann,bodini2016increasing}. Finally, let us
mention linear exact-size samplers for Catalan and Motzkin trees exploiting the
shape of their holonomic specifications~\cite{bacher2013exact}.

What was left open since the initial work of Duchon, Flajolet, Louchard, and
Schaeffer was the development of (i) efficient, general-purpose Boltzmann
oracles providing effective means of evaluating combinatorial systems within
their disks of convergence, and (ii) an automated tuning procedure controlling
the expected parameter value sizes of generated objects. The former problem was
finally addressed by Pivoteau, Salvy and Soria~\cite{PiSaSo12} who defined a
rapidly converging combinatorial variant of the Newton oracle by lifting the
combinatorial version of Newton's iteration of Bergeron, Labelle and
Leroux~\cite{species} to a new numerical level. In principle, using their Newton
iteration and an appropriate use of binary search, it became possible to
approximate the singularity of a given algebraic combinatorial system with
arbitrarily high precision. However, even if the singularity $\rho$ is estimated
with high precision, say $10^{-10}$, its approximation quality does not
correspond to an equally accurate approximation of the generating function
values at $\rho$, often not better than $10^{-2}$.  Precise evaluation at $z$
close to $\rho$ requires an extremely accurate precision of $z$. Fortunately, it
is possible to trade-off the evaluation precision for an additional rejection
phase using the idea of analytic samplers~\cite{BodLumRolin} retaining the
uniformity even with rough evaluation estimates.

Nonetheless, frequently in practical applications such as for instance software
testing, uniform distribution of outcome structures might not be the most
effective choice~\cite{7107466}. In fact, it can be argued that most software
bugs are minuscule, neglected corner cases, which will not be caught using
large, typical instances of random data,
see~\cite{Palka:2011:TOC:1982595.1982615,Runciman:2008:SLS:1411286.1411292}. In
such cases, additional control over the internal structure of generated objects
is required, cf.~\cite{palka2012}. Non-uniform generation schemes are also
required in genomics~\cite{denise2000random}. Patterns observed in real genomic
sequences are tested against randomness and therefore sequences with given
nucleotide frequencies need to be sampled. Random generation becomes more
involved when the properties do not relate to simple motifs, but, for example,
relates to secondary protein structure~\cite{hammer2019fixed} or to evolution
histories~\cite{chauve2019counting}.

In~\cite{denise2000random} a multi-parametric random generation framework for
context-free languages was suggested, while the question of efficient numerical
tuning of the required generating function arguments was left open.
In~\cite{BodPonty} Bodini and Ponty proposed a multidimensional Boltzmann
sampler model, developing a tuning algorithm meant for the random generation of
words from context-free languages with a given target letter frequency vector.
This was a major improvement over~\cite{denise2000random}, however, their
algorithm converges only in an \emph{a priori} unknown vicinity of the target
tuning variable vector. In practice, it is therefore possible to control no
more than a few tuning parameters at the same time.

In the present paper we propose a novel polynomial-time tuning algorithm based
on convex optimisation techniques, overcoming the previous convergence
difficulties. We demonstrate the effectiveness of our approach with several
examples of rational, algebraic, Pólya structures, and labelled structures.
Remarkably, with our new method, we are able to easily handle large
combinatorial systems with thousands of combinatorial classes and tuning
parameters.

In order to illustrate the effectiveness of our approach, we implemented a
prototype Python library called {\sf Paganini} meant to provide suitable tuning
vectors, and a standalone sampler generator {\sf Boltzmann Brain}. Our software
is freely available as open
source\footnote{\url{https://github.com/maciej-bendkowski/paganini}}%
\footnote{\url{https://github.com/maciej-bendkowski/boltzmann-brain}}.

The paper is structured as follows. In~\cref{section:random:sampling} we outline
basic concepts of combinatorial random sampling, including the principles of the
recursive method and Boltzmann sampling. In~\cref{section:symmetric:polynomials}
we introduce a new (to our best knowledge) admissible \emph{combination}
operation of combinatorial classes, and explain its usefulness in certain
auxiliary transformations for combinatorial specifications used during the
construction of the tuning problem. In the
following~\cref{section:multiparametric:tuning}, we show how to express the
tuning problem as a convex optimisation problem and provide important
simplifications for context-free grammars, labelled structures, increasing trees
and other types of structures. In~\cref{section:barriers} we provide a detailed
complexity analysis of the convex optimisation problems obtained from various
combinatorial specifications using the notion of self-concordant barriers. Then,
in~\cref{section:biased:expectation} we address the problem of finding an
optimal target expectation for Boltzmann samplers when the target size is
constrained in a finite interval \( [n_1, n_2] \). Next, in~\cref{sec:paganini}
we describe our prototype implementation. Finally,
in~\cref{section:applications} we illustrate the effectiveness of our approach
providing several exemplary applications of our tuning algorithm.

\section{Combinatorial random sampling}
\label{section:random:sampling}

\subsection{Admissible combinatorial classes}
Let us consider the neutral class $\mathcal{E}$, commonly denoted as $1$,
consisting of a single object of size zero, and its atomic counterpart
$\mathcal{Z}$, which is a class consisting of a single object of size one. Both
are equipped with a finite set of admissible operators, such as the disjoint
union $+$, Cartesian product $\times$, and sequence $\Seq$, see~\cite[Section
I.2]{flajolet09}. Depending on whether we consider labelled or unlabelled
structures, we allow for more expressive admissible operators including for
instance the multiset $\MSet$, set $\Set$, or cycle $\Cycle$ constructions. In
such a setting, combinatorial specifications we consider in the current paper
are finite systems of equations (possibly mutually recursive) built from
elementary classes $\mathcal{E}$, $\mathcal{Z}$, and admissible operators.

\begin{Example}
Consider the following joint specification for $\mathcal{T}$ and $\mathcal{Q}$.
In the combinatorial class \( \mathcal T \) of trees, nodes at even level (the
root starts at level one) have either no or exactly two children, whereas each
node at odd level has an arbitrary number of non-planarily ordered children:
\begin{equation}
\label{eq:running:univariate}
\begin{cases}
    \mathcal{T}=\mathcal{Z} \MSet(\mathcal{Q})\\
    \mathcal{Q}=\mathcal{Z}+\mathcal{Z}\mathcal{T}^2
\end{cases}
\end{equation}
\end{Example}
In order to distinguish, in other words \emph{mark}, some additional
combinatorial parameters we consider the following natural multivariate
extension of specifiable classes.

\begin{Definition}[Specifiable $k$\nobreakdash-parametric combinatorial classes]
  A specifiable $k$\nobreakdash-parametric combinatorial class is a
combinatorial specification built, in a possibly recursive manner, from $k$
distinct atomic classes $\mathcal{Z}_1, \ldots, \mathcal{Z}_k$, the neutral
class $\mathcal{E}$, and admissible operators. In particular, a vector \(
\vec{\mathcal C} = \seq{\mathcal{C}_1,\ldots,\mathcal{C}_m} \) forms a
specifiable $k$\nobreakdash-parametric combinatorial class if its specification
can be written down as
\begin{equation}\label{eq:specifiable:class}
\begin{cases}
    \mathcal C_1
    =
    \Phi_1(\vec{\CS C}, \CS Z_1, \ldots, \CS Z_k)\\
        \vdots  \\
    \mathcal C_m
    =
    \Phi_m(\vec{\CS C}, \CS Z_1, \ldots, \CS Z_k)
\end{cases}
\end{equation}
    where the right-hand side expressions $ \Phi_i(\vec{\CS C}, \CS Z_1, \ldots,
\CS Z_k)$ are composed from $\vec{\CS C},\CS Z_1,\ldots,\CS Z_k$, admissible
operators, and the neutral class \( \mathcal{E} \).
\end{Definition}

\begin{Example}\label{example:multivariate}
Let us continue our running example, see~\eqref{eq:running:univariate}. Note
that we can introduce two additional \emph{marking} classes \( \mathcal U \) and
\( \mathcal V \), into~\eqref{eq:specifiable:class}, each of weight zero, turning
our example into a $k$\nobreakdash-specifiable combinatorial class:
\begin{equation}
\label{eq:running:multivariate}
\begin{cases}
    \mathcal{T}=\CS U \CS Z \MSet(\mathcal{Q}),\\
    \mathcal{Q}=\CS V \CS Z + \mathcal{Z}\mathcal{T}^2\, .
\end{cases}
\end{equation}
Here, $\mathcal{U}$ is meant to mark the occurrences of nodes at odd levels,
whereas $\mathcal{V}$ is meant to mark leaves at even levels. In effect, we
\emph{decorate} the univariate specification with explicit information regarding
the internal structural patterns of our interest.
\end{Example}

Much like in their univariate variants, $k$\nobreakdash-parametric combinatorial
specifications are naturally linked to ordinary multivariate generating
functions, see~\cite[Chapter III]{flajolet09}.

\begin{Definition}[Multivariate generating functions]
The multivariate ordinary generating function $C(z_1,\ldots,z_k)$ in variables
$z_1,\ldots,z_k$ associated to a specifiable $k$\nobreakdash-parametric
combinatorial class $\mathcal{C}$ is defined as
\begin{equation}
    \displaystyle C(z_1,\ldots,z_k)
    =
    \sum_{p_1\geq 0, \ldots, p_k\geq 0}
    c_{\boldsymbol{p}}
        \boldsymbol{z}^{\boldsymbol{p}}
\end{equation}
where $c_{\boldsymbol{p}}=c_{p_1,\ldots,p_k}$ denotes the number of structures
with $p_i$ atoms of type $\CS Z_i$, and $\boldsymbol{z}^{\boldsymbol{p}}$ denotes
the product $z_1^{p_1} \cdots z_k^{p_k}$. In the sequel, we call
$\boldsymbol{p}$ the \emph{(composition) size} of the structure.
\end{Definition}

In this setting, we can easily lift the usual univariate generating function
construction rules associated with admissible constructions to the realm of
multivariate generating functions associated to specifiable
$k$\nobreakdash-parametric combinatorial classes.

\subsection{Combination operator}\label{section:symmetric:polynomials}
Many examples of combinatorial structures are naturally expressed as collections
of disjoint unions and do not require to explicitly \emph{exclude} certain
undesired configurations. Consequently, right-hand side expressions
$\Phi_i(\vec{\CS C}, \CS Z_1, \ldots, \CS Z_k)$ of their combinatorial
specifications tend to be composed of summands with positive coefficients. One
notable exception, however, are specifications constructed using the
\emph{inclusion-exclusion principle} which, due to the induced class
subtraction, may cause considerable difficulties for both tuning and sampling.

From the sampling perspective, exclusion in the combinatorial specification
requires additional rejection, whose cost may undesirably dominate over the cost
of sampling. Even worse, later in~\cref{section:multiparametric:tuning} we will
argue that, in general, negative coefficients do not allow to express the
tuning problem in convex optimisation form.

Remarkably, in some cases it is possible to rewrite the initial system with
negative coefficients in such a way that the resulting system contains only
positive terms. A typical example of such a situation is the construction of a
non-empty
product containing at most one object from each of the classes \( \CS C_1, \ldots,
\CS C_d \). Symbolically:
\begin{equation}\label{eq:symmetric:polynomials:D}
  (1 + \mathcal C_1)(1 + \mathcal C_2)\cdots(1 + \mathcal C_d) - 1.
\end{equation}
Note that~\eqref{eq:symmetric:polynomials:D} is a combinatorial class consisting
of non-empty tuples of length $d$, such that their $i$-th coordinate is either
$\mathcal{E}$ or an object from $\mathcal{C}_i$. The above concise specification
explicitly excludes the empty tuple, however introduces subtraction in the
specification. We can eliminate the subtraction by expanding all brackets
in~\eqref{eq:symmetric:polynomials:D}, however, such a naive transformation
produces $2^d - 1$ summands in the resulting specification. Instead, we propose
another transformation.

\begin{Definition}[Combination operator]
  Let $\CS C_1, \ldots, \CS C_d$ be combinatorial classes. The admissible
\emph{$k$\nobreakdash-combination} or \emph{$k$\nobreakdash-selection} \(
\Select_k(\CS C_1, \ldots, \CS C_d) \) of $\CS C_1, \ldots, \CS C_d$ is a
combinatorial class consisting of all tuples $\seq{c_1,\ldots,c_d}$
such that
\begin{itemize}
\item  $c_i$ is either empty, i.e.~$\CS E$, or an element of $\CS C_i$, and
\item exactly $\binom{d}{k}$ elements of $\seq{c_1,\ldots,c_d}$ are non-empty.
\end{itemize}
\end{Definition}

\begin{Example}
  Consider the classes $\CS C_1, \ldots, \CS C_d$. Note that
  \begin{equation}\label{eq:combination:operator:iso}
    \Select\nolimits_k(\CS C_1, \ldots, \CS C_d) \cong \sum_{1 \leq i_1 < i_2 <
\cdots < i_k \leq d} \CS C_{i_1} \times \CS C_{i_2} \times \cdots \times \CS
C_{i_k}
  \end{equation}
  In words, the class $\Select\nolimits_k(\CS C_1, \ldots, \CS C_d)$ is
isomorphic with the disjoint union of all ordered $k$\nobreakdash-products of
classes in $\CS C_1, \ldots, \CS C_d$. In particular, $\Select\nolimits_0(\CS
C_1,\ldots, \CS C_d) \cong 1$. Furthermore, it holds $\Select\nolimits_d(\CS
C_1,\ldots, \CS C_d) \cong \prod_i \CS C_i$. It should be noted that the
isomorphism in~\eqref{eq:combination:operator:iso} cannot be replaced with
strict equality, as $\Select\nolimits_k(\CS C_1, \ldots, \CS C_d)$ consists of
tuples of length $d$ whereas the right-hand side sum consists of tuples of
length $k$.
\end{Example}

Let us denote \( \Select\nolimits_k(\CS C_1, \ldots, \CS C_d) \) as $\CS S_k$.
Note that
\begin{equation}
(1 + \mathcal C_1)(1 + \mathcal C_2)\cdots(1 + \mathcal C_d) - 1 =
\CS S_1 + \cdots + \CS S_d.
\end{equation}
The resulting classes \( \CS S_k \) are, in fact,
\emph{elementary symmetric polynomials} consisting of all $\binom{d}{k}$
summands of the expanded $\prod_{i = 1}^{d} (1 + \mathcal{C}_i)$ of
length $k$. Since a direct, verbose representation of $\Select\nolimits_{k}(\CS
C_1, \ldots, \CS C_d)$ is exponential in $d$, we suggest an indirect dynamic
programming approach, allowing us to obtain a much more succinct representation
of each of the selection operations using a total of $O(d^2)$ of auxiliary
combinatorial classes.

Let $\mathcal{P}_{k, m}$ be a subset of $\mathcal{S}_k$ in which products are
restricted to involve classes from $\mathcal{C}_{1}, \ldots, \mathcal{C}_m$.
Note that $\mathcal{P}_{k,m}$ is empty if and only if $m < k$. Moreover,
$\mathcal{P}_{k,d} = \mathcal{S}_k$. We start with
\begin{align}
  \begin{split}
    \mathcal{P}_{1,1} &= \mathcal{C}_1 \\
    \mathcal{P}_{1,m + 1} &= \mathcal{P}_m + \mathcal{C}_{m+1}.
  \end{split}
\end{align}

Now, suppose that we have computed $\mathcal{P}_{0,m},
\mathcal{P}_{1,m},\ldots,\mathcal{P}_{k,m}$ for all values of $m$, and wish to
compute the next row of classes corresponding to $k + 1$. We start to iterate
$m$ in increasing order. For all $m$ such that $m < k + 1$, we set
$\mathcal{P}_{k+1, m} = \emptyset$. Otherwise, if $m \geq k+1$ we note that
\begin{equation}
 \mathcal{P}_{k+1, m} = \mathcal{P}_{k+1, m - 1} + \mathcal{C}_{m} \times
 \mathcal{P}_{k, m - 1}.
\end{equation}

The correctness of the above scheme can be proven by induction. With its help,
we can compute a lower-triangular matrix of the symbolic representations for
$\mathcal{P}_{k,m}$ and so also the elementary symmetric polynomials
$\mathcal{S}_k$. As a byproduct, we can therefore efficiently rewrite the
initial system~\eqref{eq:symmetric:polynomials:D} into an equivalent one with
positive terms.

This transformation can be used as an auxiliary construction in other
combinatorial specifications. For example, we apply the described technique
in~\cref{subsection:weighted:partitions} in order to sample from a
multi-parametric class
\begin{equation}
    \MSet\left(
        \Seq(\CS Z_1)
        \Seq(\CS Z_2)
        \cdots
        \Seq(\CS Z_d) - 1
    \right).
\end{equation}

\subsection{Boltzmann samplers and the recursive
  method}\label{section:recursive}
There exists a number of different sampling techniques in the literature. In the
current paper, we focus on two most prominent, general purpose frameworks ---
the recursive method~\cite{NijenhuisWilf1978} and Boltzmann
samplers~\cite{DuFlLoSc}. In what follows, we focus specifically on their
 distinguishing features.

\subsubsection{The recursive method}
The three basic building bricks of the \emph{recursive method} are, similarly to
the symbolic method and admissible classes, disjoint union, Cartesian product,
and pointing. In this framework, the counting sequences of the combinatorial
classes need to be readily available, as they determine the branching
probabilities of the random generation process. The three operations are
processed as follows.

\textbf{Disjoint union.} If the counting sequence \( (C_n)_{n \geq 0} \) of the
target class \( \CS C \) is given by \( C_n = A_n + B_n \) where \( A_n \) and
\( B_n \) are the counting sequences of the classes \( \CS A \) and \( \CS B \),
respectively, then with probability \( \frac{A_n}{A_n + B_n} \) an object from
\( \CS A \) is constructed, otherwise an object from class \( \CS B \) is
constructed.

\textbf{Cartesian product.} If the counting sequence of the target class \( \CS
C \) satisfies the equation \( C_n = \sum_{k = 0}^n A_k \cdot B_{n-k} \), then a
tuple of sizes \( (k, n-k) \) is chosen with probability \( \frac{A_k
B_{n-k}}{C_n} \) and objects from \( \CS A \) and \( \CS B \) of respective
sizes \( k \) and \( n-k \) are generated.

\textbf{Pointing.} If \( \CS A \) is a combinatorial class, then an object in a
pointed class \( \Theta \CS A \) is isomorphic to an object from \( \CS A \)
which has a distinguished atom. Now, if the counting sequence of \( \CS A \) is
\( A_n \), then the counting sequence of \( \Theta \CS A \) is given by \( n \cdot A_n
\). Having one of the samplers for \( \CS A \) or \( \Theta \CS A \), it is easy
to obtain the other one by either distinguishing a label uniformly at random or,
the other way round, by forgetting which label is distinguished.

Note that the pointing operator substantially enriches the set of admissible
specifications and plays a central r\^ole in the design of recursive samplers.
For example, the two labelled operations, \( \Set \) and \( \Cycle \) are
bijectively transformed using the pointing operation as follows:
  \begin{equation}
    \Theta \Set(\CS A) = \Set(\CS A) \times \Theta \CS A \quad \text{and} \quad
    \Theta \Cycle(\CS A) = \Seq(\CS A) \times \Theta \CS A.
  \end{equation}
We are not aware of a direct and efficient sampling algorithm based on the
recursive method for unlabelled structures involving the \( \MSet \) and \(
\Cycle \) operators.

\subsubsection{Boltzmann samplers}
While the recursive method is applicable to specifications irrespective of the
analyticity of their generating functions, Boltzmann samplers work only with
classes whose generating functions are analytic. However, unlike the recursive
method, Boltzmann samplers do not require the underlying counting series.
Instead, they rely on the values of the generating functions. As a consequence,
the size of the outcome structure is no longer fixed, but follows a
\emph{Boltzmann distribution} \( \mathbb P(\text{size} = k) = \frac{A_k
z^k}{\sum_{n \geq 0} A_n z^n} \) with \( z \) being its parameter.
Nevertheless, conditioned on size, such samplers
generate a uniformly chosen object from the target class.

Boltzmann samplers support three basic operations, i.e.~disjoint union,
Cartesian product, and substitution. For instance, assuming that that \(
\varphi(x) \) is analytic, we can consider a family of simply generated trees
satisfying
  \begin{equation}
    T(z) = z \varphi(T(z)).
  \end{equation}
 Let us remark however, that while substitutions of type \( \varphi(T(z)) \) are
usually easier to handle, substitutions in form of \( T(\varphi(z)) \) are much
more involved. Note that the recursive method does not easily support substitutions.
Now, let us focus on how these three basic operations are processed.\bigskip

\textbf{Disjoint union.}
Consider a target class \( \CS C \) with a generating function \( C(x) \). Let
\( \CS A \) and \( \CS B \) be two combinatorial classes with generating
functions \( A(x) \) and \( B(x) \), respectively, such that \( \CS C = \CS A +
\CS B \). Then, with probability \( \frac{A(x)}{A(x) + B(x)} \) an object from
\( \CS A \) is drawn, otherwise an object from class \( \CS B \) is generated.

\textbf{Cartesian product.}
Consider a target class \( \CS C \) with a generating function \( C(x) \). Let
\( \CS A \) and \( \CS B \) be two combinatorial classes with generating
functions \( A(x) \) and \( B(x) \), respectively, such that \( \CS C = \CS A
\times \CS B \). Then, an independent pair of recursively generated objects from
\( \CS A \) and \( \CS B \) is drawn.

\textbf{Substitution.}
Let \( C(x) = \varphi(B(x)) \) where \( \varphi(t) = \sum_i \phi_i t^i \). Then,
a Boltzmann sampler for \( \CS C \) is obtained as follows. Fix \( t = B(x)\)
and sample a random integer \( k \) from the distribution \( \mathbb P(k) =
\phi_k t^k / \varphi(t) \). Finally, draw \( k \) independent copies of
recursively sampled objects from \( \CS B \).

The substitution rule, in particular, provides Boltzmann samplers for the
labelled \( \Set \), \( \Seq \) and \( \Cycle \) constructions. It turns out
that such samplers also cover a wide interesting family of combinatorial classes
and constructions, including unlabelled structures, first-order differential
specifications, Hadamard product and Dirichlet generating
series, see~\cite{bodini2010autour} for further details.

We summarise the most common rules in
Tables~\ref{table:constructions},~\ref{table:constructions:unlabelled},
and~\ref{table:constructions:labelled}. We write \( X \implies \Gamma \) to
denote a procedure generating random objects from \( \Gamma \) based on the
random discrete distribution \( X \) --- we draw an integer \( r \) from \( X \)
and then, repeatedly and independently, invoke the respective sampler \( \Gamma
\) $r$ times. As a result, we obtain an $r$\nobreakdash-tuple of sampled objects.
For more details we refer the reader to~\cite{DuFlLoSc}
and~\cite{flajolet2007boltzmann}.

\renewcommand{\arraystretch}{1.3}
\begin{table*}[htbp!]
    \begin{equation*}
\begin{array}{r|l|l|l}
     \text{Class}
     &
     \text{Description}
     &
     C(\boldsymbol{z})
     &
     \Gamma \mathcal{C} ( \boldsymbol{z} )
     \\
\hline
\hline
     \text{Neutral}
     &
     \CS{C} = \{\varepsilon\}
     &
     C(\boldsymbol{z}) = 1
     &
     \varepsilon
     \\
\hline
     \text{Atom}
     &
     \CS{C} = \{\square_i\}
     &
     C(\boldsymbol{z}) = z_i
     &
     \square_i\\
\hline
     \text{Union}
     &
     \CS{C} = \CS{A} + \CS{B}
     &
     A(\boldsymbol{z}) +B(\boldsymbol{z})
     &
     \Bern\big(
       \frac{A(\boldsymbol{z})}{C(\boldsymbol{z})},
       \frac{B(\boldsymbol{z})}{C(\boldsymbol{z})}
     \big)
       \longrightarrow
     \Gamma \mathcal{A}(\boldsymbol{z})
     \;|\;
     \Gamma \mathcal{B}(\boldsymbol{z})
     \\
     \hline
     \text{Product}
     &
     \CS{C} = \CS{A} \times \CS{B}
     &
     A(\boldsymbol{z})
       \times
     B(\boldsymbol{z})
     &
     (
       \Gamma \mathcal{A}(\boldsymbol{z});
       \Gamma \mathcal{B}(\boldsymbol{z})
     )
     \\
\hline
      \mbox{Sequence}
      &
      \CS{C} = \Seq(\CS{A})
      &
      (1-A(\boldsymbol{z}))^{-1}
      &
      \text{Geom}(1 - A(\boldsymbol{z})) \implies
      (\Gamma \mathcal A(\vec z))
      \\
\end{array}
\end{equation*}
\caption{Multivariate generating functions and corresponding Boltzmann samplers
$\Gamma\mathcal{C}(\boldsymbol{z})$. }\label{table:constructions}
\end{table*}

\begin{table*}[htbp!]
    \begin{equation*}
\begin{array}{r|l|l|l}
     \text{Class}
     &
     \text{Description}
     &
     C(\boldsymbol{z})
     &
     \Gamma \mathcal{C} ( \boldsymbol{z} )
     \\
\hline
\hline
      \text{MultiSet}
      &
      \MSet(\mathcal{A})
      &
      \exp\left(
          \sum_{m=1}^\infty
          \tfrac{1}{m}A(\boldsymbol{z}^m)
      \right)
      &
      \text{
          see~\cite[Algorithm 1]{bodini2010autour}
      }
      \\
\hline
   \text{Cycle}
   &
   \Cycle(\mathcal{A})
   &
   \sum_{m=1}^\infty \!\!
   \frac{\varphi(m)}{m}
   \ln \frac{1}{1-A(\boldsymbol{z}^m)}
   &
   \text{
      see~\cite[Algorithm 2]{bodini2010autour}
   }
   \\
\end{array}
\end{equation*}
\caption{Multivariate generating functions and corresponding Boltzmann samplers
$\Gamma\mathcal{C}(\boldsymbol{z})$ for unlabelled
    constructions.}\label{table:constructions:unlabelled}
\end{table*}
\begin{table*}[htbp!]
    \begin{equation*}
\begin{array}{r|l|l|l}
     \text{Class}
     &
     \text{Description}
     &
     \widehat{C}(\boldsymbol{z})
     &
     \Gamma \widehat{\mathcal{C}} ( \boldsymbol{z} )
     \\
\hline
\hline
      \text{Set}
      &
      \Set(\mathcal{A})
      &
      \exp(
        \widehat{A}(\boldsymbol{z})
      )
      &
      \text{Pois}(\widehat{A}(\boldsymbol{z})) \implies
      (\Gamma \widehat{\mathcal{A}}(\vec z))
      \\
\hline
   \text{Cycle}
   &
   \Cycle(\mathcal{A})
   &
    \log (1 - \widehat{A}(\boldsymbol{z}))^{-1}
    &
      \text{Loga}(\widehat{A}(\boldsymbol{z})) \implies
      (\Gamma \widehat{\mathcal{A}}(\vec z))
   \\
\end{array}
\end{equation*}
\caption{Multivariate generating functions and corresponding Boltzmann samplers
$\Gamma\mathcal{C}(\boldsymbol{z})$ for labelled
    constructions.}\label{table:constructions:labelled}
\end{table*}

\subsubsection{Multi-parametric Boltzmann samplers}
Consider a multi-parametric combinatorial class \( \CS C \) with a multivariate
generating function \( C(\vec z) \). Let \( \omega \in \CS C \) be a
combinatorial object with composition size \( \vec p \). Then, a
multi-parametric Boltzmann sampler \( \Gamma \CS C(\vec z) \) outputs \( \omega
\) with probability
\begin{equation}
  \mathbb P_{\vec z}(\omega) = \dfrac{\vec z^{\vec p}}{C(\vec z)} \, .
\end{equation}
    Such samplers can be constructed from multi-parametric combinatorial
specifications in the same way as ordinary Boltzmann samplers are constructed.
When the expressions of generating functions involve different values of the
tuning variables, then these variables, after substitution, yield new branching
probabilities.

\begin{proposition}[Log-exp transform of the tuning problem]\label{proposition:expected:value}
Let \( \vec N = (N_1, \ldots, N_k) \) be the random vector where \( N_i \)
equals the number of atoms of type $\CS Z_i$ in a random combinatorial structure
returned by the $k$\nobreakdash-parametric Boltzmann sampler
$\Gamma\mathcal{C}(\vec z)$. Then, the expectation vector \( \mathbb E_{\vec z}
(\vec N) \) and the covariance matrix $\Cov_{\boldsymbol{z}}(\boldsymbol{N})$
are given by
\begin{equation}\label{eq:expectation:and:variance}
    \mathbb{E}_{\boldsymbol{z}}(N_i)
    =
    \left.
    \dfrac
        {\partial}
        {\partial \xi_i}
    \log C(e^{\vec \xi})
    \right|_{\vec \xi = \log \vec z}
    \quad \text{and} \quad
    \mathrm{Cov}_{\boldsymbol{z}}(\boldsymbol{N})
    =
    \left.
    \left[
        \dfrac
            {\partial^2}
            {\partial \xi_i \partial \xi_j}
        \log C(e^{\vec \xi})
    \right]_{i,j = 1}^k
    \right|_{\vec \xi = \log \vec z}
    \enspace .
\end{equation}
Hereafter, we use \( e^{\boldsymbol z} \) to denote coordinate-wise exponentiation.
\end{proposition}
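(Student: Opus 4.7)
The plan is to recognise that after the change of variables $z_i = e^{\xi_i}$, the distribution of $\vec N$ becomes a standard exponential family whose log-partition function is precisely $A(\vec \xi) := \log C(e^{\vec \xi})$, and then invoke (or re-derive by direct differentiation) the identity that the gradient and Hessian of a log-partition function recover the mean and covariance of the sufficient statistic.

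Concretely, I would first spell out the law of $\vec N$: by the very definition of a multi-parametric Boltzmann sampler recalled just above, conditioned on the output having composition size $\vec p$ the sampler is uniform over the $c_{\vec p}$ objects of that size, so
\[
\mathbb P_{\vec z}(\vec N = \vec p) \;=\; c_{\vec p}\,\frac{\vec z^{\vec p}}{C(\vec z)}.
\]
Substituting $\vec z = e^{\vec \xi}$ rewrites this as $c_{\vec p}\,e^{\langle \vec \xi, \vec p\rangle}/C(e^{\vec \xi})$, an exponential family with sufficient statistic $\vec p$, natural parameter $\vec \xi$ and log-partition function $A(\vec \xi) = \log C(e^{\vec \xi})$.

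Next I would establish the first identity by direct computation:
\[
\frac{\partial}{\partial \xi_i}\log C(e^{\vec \xi})
\;=\;\frac{1}{C(e^{\vec \xi})}\cdot e^{\xi_i}\,\partial_{z_i} C(\vec z)\big|_{\vec z=e^{\vec \xi}}
\;=\;\sum_{\vec p} p_i\,c_{\vec p}\,\frac{\vec z^{\vec p}}{C(\vec z)}
\;=\;\mathbb E_{\vec z}[N_i],
\]
which matches the claimed formula at $\vec \xi = \log \vec z$. For the Hessian I would differentiate once more, using $\partial/\partial \xi_j = z_j\,\partial/\partial z_j$, and split according to $i\neq j$ or $i=j$. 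In the off-diagonal case the calculation gives
\[
\frac{\partial^2 A}{\partial \xi_i\partial \xi_j}
\;=\;\frac{z_i z_j\,\partial^2_{z_iz_j}C}{C}-\frac{z_i\partial_{z_i}C}{C}\cdot\frac{z_j\partial_{z_j}C}{C}
\;=\;\mathbb E_{\vec z}[N_iN_j]-\mathbb E_{\vec z}[N_i]\mathbb E_{\vec z}[N_j],
\]
using $\mathbb E[N_iN_j]=\sum_{\vec p}p_ip_j c_{\vec p}\vec z^{\vec p}/C$. On the diagonal the only bookkeeping subtlety is the identity $p_i^2 = p_i(p_i-1)+p_i$, which accounts for the extra $z_i\partial_{z_i}C/C$ term produced by differentiating $z_i\partial_{z_i}C$ in $z_i$; after this rearrangement one recovers $\mathrm{Var}_{\vec z}(N_i)$.

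There is no real obstacle here; the only thing to be careful with is the chain rule bookkeeping for the change of variables $\xi_i \mapsto z_i = e^{\xi_i}$, which is what produces the multiplicative factors $z_i$ that convert the naive derivatives of $C$ into the probability-weighted moments of $\vec N$. Convergence of the interchanged sum and derivative is guaranteed because the Boltzmann sampler is assumed to be well-defined, i.e.\ $\vec z$ lies strictly inside the domain of convergence of $C$, so the series for $C(\vec z)$ and its termwise first and second derivatives all converge absolutely and may be differentiated termwise.
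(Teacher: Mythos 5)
Your proof is correct. It differs from the paper's in route rather than substance: the paper first introduces the probability generating function $p(\vec u \mid \vec z) = C(\vec u \bullet \vec z)/C(\vec z)$, reads off the mean and covariance of $\vec N$ from the standard formulas $\nabla_{\vec u}p|_{\vec u=1}$ and $\nabla^2_{\vec u}p + \mathrm{diag}(\nabla_{\vec u}p) - \nabla_{\vec u}p\,\nabla_{\vec u}^\top p|_{\vec u=1}$, and then asserts that these agree with the log-exp derivatives after expansion (leaving that comparison implicit). You bypass the PGF entirely and differentiate $\log C(e^{\vec\xi})$ directly, using the chain-rule identity $\partial/\partial\xi_i = z_i\,\partial/\partial z_i$ to recognise the results as the probability-weighted moments of $\vec N$; you also handle the diagonal bookkeeping ($p_i^2 = p_i(p_i-1)+p_i$) explicitly, which is precisely the content of the $\mathrm{diag}(\nabla_{\vec u}p)$ correction term in the paper. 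Your framing via the log-partition function of an exponential family is a nice conceptual bonus that the paper does not spell out, and your version has the advantage of making explicit the final comparison step that the paper merely gestures at; the paper's PGF detour, on the other hand, emphasises the probabilistic interpretation and connects naturally to the standard moment formulas the reader may already know.
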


\begin{proof}
  Let the following \emph{nabla-notation} denote the vector of derivatives
(so-called gradient vector) with respect to the variable vector \( \vec z =
(z_1, \ldots, z_k) \):
  \begin{equation}
    \nabla_{\vec z} f(\vec z) =
    \left(
      \dfrac{\partial}{\partial z_1} f(\vec z),
      \ldots,
      \dfrac{\partial}{\partial z_k} f(\vec z)
    \right)^\top\, .
  \end{equation}
    Let \( (c_{\vec n})_{\vec n \succeq \vec 0} \) be the counting sequence of
the combinatorial class \( \CS C \). The probability generating function \(
p(\vec u | \vec z) \) for \( \vec N \) with \( \vec u \) as an argument and \(
\vec z \) as a parameter takes the form
\begin{equation}
  p(\vec u\, | \, \vec z)
  =
  \sum_{\vec n \succeq \vec 0}
  \dfrac{c_{\vec n} \vec z^{\vec n} \vec u^{\vec n}}
  {C(\vec z)}
  =
  \dfrac{C(\vec u \bullet \vec z)}{C(\vec z)}
\end{equation}
where \( \bullet \) denotes component-wise vector multiplication. Hence, the
expected value and the covariance of \( \vec N \) can be immediately expressed
through its probability generating function as
      \begin{align}
        \begin{split}
        \mathbb E_{\vec z} \vec N &= \nabla_{\vec u}
        p(\vec u |  \vec z) |_{\vec u = \vec 1}\\
        \mathrm{Cov}_{\vec z} (\vec N) &=
        \left[
            \nabla^2_{\vec u}
            p(\vec u |  \vec z)
            +
            \mathrm{diag}(
            \nabla_{\vec u}
            p(\vec u |  \vec z))
            -
            \nabla_{\vec u}
            p(\vec u |  \vec z)
            \nabla_{\vec u}^\top
            p(\vec u |  \vec z)
        \right]_{\vec u = \vec 1}.
        \end{split}
  \end{align}
    The proof is finished by expanding the log-exp transform
in~\eqref{eq:expectation:and:variance} and comparing the result to the
expressions obtained from the probability generating functions.
\end{proof}

\begin{Remark}
\label{corrolary:convexity}
The function \( \gamma(\vec z) := \log C(e^{\vec z}) \) is convex because its
matrix of second derivatives, as a covariance matrix, is positive semi-definite
inside the set of convergence. This crucial assertion will later prove central
to the design of our tuning algorithm. The expressions for the expectation and
the covariance matrix are similar to those obtained in~\cite{BenderCLT} for
central limit theorem for multivariate generating functions.
\end{Remark}

\begin{Remark}
    Uni-parametric recursive samplers of Nijenhuis and Wilf take, as well as
Boltzmann samplers, a system of generating functions as their input. This system
can be modified by putting fixed values of tuning variables, in effect altering
the corresponding branching probabilities. The resulting distribution of the
random variable corresponding to a weighted recursive sampler coincides with the
distribution of the Boltzmann-generated variable conditioned on the structure
size. As a corollary, the tuning procedure that we discuss in the following
section is also valid for the exact-size approximate-frequency recursive
sampling.
\end{Remark}

\subsection{Complexity of exact parameter sampling}
While the current paper is devoted to tuning of the parameters \emph{in
expectation}, let us pause for a moment and ask the following, natural question
--- what is the complexity of \emph{exact-parameter} sampling for
multi-parametric combinatorial specifications?

In the current section we show that, unless both the classes of decision
problems solvable in randomised $\Rp$ and nondeterministic $\Np$ polynomial time
are equal, then already for unambiguous context-free languages there exists no
fully polynomial-time algorithm for almost-uniform exact-parameter sampling
problem. Since it is widely conjectured that $\Rp \neq \Np$,
cf.~\cite{welsh2001complexity}, this infeasibility result justifies parameter
tuning \emph{in expectation}, which can be regarded as a continuous relaxation
of the exact-parameter problem variant.

Consider a context-free grammar $G$ with derivation rules in form of
\begin{equation}\label{eq:cf:grammar}
    A_i \to T_{i,j}
\end{equation}
where \( A_i \) is a non-terminal symbol, and the right-hand side expression \(
T_{i,j} \) is a (possibly empty) word consisting of both terminal and
non-terminal symbols. Recall that the context-free grammar $G$ is said to be
\emph{unambiguous} if each word its generates has a unique derivation. Let \(
a_1, \ldots, a_d \) be distinct terminal symbols. The exact multi-parametric
sampling problem for unambiguous context-free grammars can be stated as
follows --- given natural numbers \( n_1, n_2, \ldots, n_d \), sample uniformly
at random a word of length \( n = n_1 + \cdots + n_d \) from the language $L(G)$
generated by $G$, such that the number of occurrences of each terminal symbol \(
a_j \) is equal to \( n_j \).

\begin{Example}
  As a simple illustrating example of the discussed problem, consider the
following grammar $B$ generating (unambiguously) all binary words over the
alphabet $\Sigma = \set{\mathtt{0},\mathtt{1}}$:
  \begin{equation}
    B \to \mathtt{0} B \mid \mathtt{1} B \mid \varepsilon.
  \end{equation}
  Recall that \( \varepsilon \) denotes the empty word. In this example, given
numbers $n_0$ and $n_1$, the multi-parametric sampling problem asks to generate
a uniformly random binary word over $\Sigma$ which has exactly $n_0$
$\mathtt{0}$s, and $n_1$ $\mathtt{1}$s.
\end{Example}

In what follows we prove that, in general, the problem of exact-size
multi-parametric sampling (even if the specification does not involve loops) can
be reduced to the $\#\mathcal{P}$\nobreakdash-complete $\#2$\nobreakdash-SAT
problem, which asks to count the number of satisfiable variable assignments of
a given $2$\nobreakdash-CNF formula. As suggested to us by Sergey Tarasov in
personal communication, such a complexity result might be folklore knowledge,
however we did not manage to find it in the literature. We therefore take the
liberty to fill this gap. Detailed definitions from complexity theory can be
found in the papers referenced during the proof of the following theorem.

\begin{theorem}[Infeasibility of exact parameter sampling]
    Unless \( \Np = \Rp \), there is no fully polynomial-time algorithm for
almost-uniform multi-parametric sampling from unambiguous context-free grammars.
\end{theorem}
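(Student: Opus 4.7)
The plan is to reduce from the \(\#\mathcal P\)-complete problem \(\#2\)-SAT in three stages. First, given a 2-CNF formula \(\phi\) over variables \(x_1, \ldots, x_n\) and clauses \(C_1, \ldots, C_m\), I would construct in polynomial time an unambiguous context-free grammar \(G_\phi\) on a designated terminal alphabet together with a multiplicity vector \(\vec n_\phi\) such that the words of \(L(G_\phi)\) with profile \(\vec n_\phi\) are in polynomial-time computable bijection with the satisfying assignments of \(\phi\). One clean route is a Cook-Levin-style encoding of a polynomial-time SAT-witness verifier as an unambiguous context-free language of computation histories: introduce one terminal per \((\text{cell},\,\text{time-step},\,\text{symbol})\) triple, have local productions enforce that consecutive configurations respect the transition relation, fix a canonical left-to-right reading order to secure unambiguity, and let the frequency vector demand that each \((\text{cell},\,\text{time-step})\) pair appear exactly once. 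A more ad hoc route exploits the at-most-two-variables-per-clause structure: mark each variable assignment by terminals \(T_i, F_i\) and attach to each clause a witness terminal whose production is only enabled when the assignments just seen satisfy it, canonicalising the choice of satisfying literal to preserve unambiguity.

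Second, the grammar \(G_\phi\) is self-reducible with respect to the frequency vector: conditioning on the leftmost letter of a derivation turns the instance \((G_\phi, \vec n_\phi)\) into a smaller one of the same form \((G_\phi^{(a)}, \vec n_\phi - \vec e_a)\), computable in polynomial time via a standard grammar derivative. By the Jerrum-Valiant-Vazirani framework, any fully polynomial almost-uniform sampler for frequency-constrained words in unambiguous CFGs thus yields a fully polynomial randomised approximation scheme for counting them. Combined with the bijection from the first stage, this produces an FPRAS for \(\#2\)-SAT. But \(\#2\)-SAT admits no FPRAS assuming \(\Np \ne \Rp\), for instance via the standard reduction from counting independent sets in bounded-degree graphs due to Dyer, Frieze and Jerrum, and the theorem follows.

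The main obstacle is the first stage. Context-free grammars accommodate local bracketing with ease but, without exponential state blow-up, struggle with the global consistency inherent in Boolean satisfaction. The computation-history approach circumvents this by making every check local in space and time, at the cost of a carefully laid out encoding; the ad hoc 2-CNF route demands a judicious canonicalisation to keep the grammar both polynomially small and unambiguous. Once the bijection of the first stage is in hand, the remaining two stages are a routine deployment of known machinery.
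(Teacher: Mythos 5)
Your stages 2 and 3 match the paper's: both invoke the Jerrum--Valiant--Vazirani equivalence between almost-uniform sampling and approximate counting (for self-reducible problems), and both appeal to the fact that \(\#2\)-SAT admits no FPRAS unless \(\Rp = \Np\). The genuine gap is stage 1, which you yourself identify as the obstacle but never close. Of your two proposed routes, the Cook--Levin one is not viable: the language of valid computation histories \(C_0 \# C_1 \# \cdots \# C_t\) is the textbook example of a non-context-free language, because checking that consecutive configurations agree is a cross-serial dependency that a single (even ambiguous) CFG cannot express. Handing the ``each (cell, time-step) appears once'' job to the frequency vector does not rescue this, since the frequency vector constrains only the multiset of letters, not which symbol was chosen at each cell. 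Your ad hoc route stumbles on exactly the global-consistency issue you flag: a CFG cannot remember which truth value \(x_i\) was assigned when it later encounters another clause mentioning \(x_i\).

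The paper's gadget sidesteps this entirely by \emph{not} trying to make the grammar verify satisfaction. It introduces one terminal \(c_j\) per clause, and for each literal \(x\) it forms the word \(X\) consisting of exactly those \(c_j\) that the literal satisfies. The language is just all concatenations \(w_1 w_2 \cdots w_n\) with \(w_i \in \{X_i, \bar X_i\}\), i.e.\ free choices of truth value, one word per assignment; unambiguity is cheap (insert separators if needed). Satisfiability is pushed into the \emph{frequency constraint}: after multiplying by \(\prod_j (1+c_j)\), the number of words whose profile is \(c_1^2 \cdots c_m^2\) is exactly the number of satisfying assignments, because each clause has two literals and therefore each \(c_j\) can appear at most twice in a monomial of \(H = \prod_i (X_i + \bar X_i)\), so demanding multiplicity two is the same as demanding multiplicity at least one. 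In short, the counting constraint, not the grammar, enforces that every clause is hit --- this is the idea your sketch is missing, and it removes any need for computation histories or canonicalisation tricks.
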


\begin{proof}
  We start by showing that extracting the coefficients $[z_1^{k_1}\cdots
z_m^{k_m}] F_i(z_1,\ldots,z_m)$ of a multivariate generating
$F_i(z_1,\ldots,z_m)$ satisfying a system of polynomial equations in form of
  \begin{align}
    \begin{cases}
      F_1(z_1,\ldots,z_m) &= \Phi_1(z_1,\ldots,z_m,F_1,\ldots,F_n)\\
      \vdots&\\
      F_n(z_1,\ldots,z_m) &= \Phi_n(z_1,\ldots,z_m,F_1,\ldots,F_n)
    \end{cases}
  \end{align}
  is $\#P$\nobreakdash-hard. We proceed by reduction from the
  $\#P$\nobreakdash-complete $\#2$\nobreakdash-SAT problem.

  Consider a $2$\nobreakdash-SAT formula
  \begin{equation}
    F = \bigwedge_{j = 1}^m (\alpha_j \lor \beta_j)
  \end{equation}
  with $n$ Boolean variables $x_1,\ldots,x_n$ and $m$ clauses. For each clause
$(\alpha_j \lor \beta_j)$ we create a distinct (complex) variable $c_j$. Next,
for each literal $x \in \set{x_1,\ldots,x_n, \n x_1 \ldots, \n x_n}$ we introduce a
corresponding multivariate generating function $X(c_1,\ldots,c_m)$
(where $X \in \{ X_1, \ldots, X_n, \n X_1, \ldots, \n X_n \}$)
defined as
  \begin{equation}
    X(c_1,\ldots,c_m) = \prod_{j=1}^m {c_j}^{\mathbf 1_{c_j}(x)}
  \end{equation}
  where $x \mapsto \mathbf 1_{A}(x)$ denotes the indicator function with respect to a set \( A \) (i.e.~$\mathbf 1_{A}(x) = 1$ if
  $x \in A$, and $\mathbf 1_{A}(x) = 1$ if $x \not\in A$).  
  Finally, we
  create a multivariate generating function
  \begin{equation}\label{eq:exact-size:sampling:H}
    H(c_1, \ldots, c_m) = \prod_{i=1}^n (X_i(c_1,\ldots,c_m) + \n
X_i(c_1,\ldots,c_m)).
  \end{equation}

  Clearly, all of these generating functions form a polynomial system of
equations. Intuitively, generating functions $X_i(c_1,\ldots,c_m)$ are
products of variables $c_j$ corresponding to clauses which become satisfied once
the respective literal $x_i$ is true. Furthermore, $H(c_1,\ldots,c_m)$ encodes
all possible variable assignments. Indeed, if we expand all brackets
in~\eqref{eq:exact-size:sampling:H} we obtain $2^n$ summands, each corresponding
to a distinct variable assignment --- occurrences of $X_i(c_1,\ldots,c_m)$ in each
summand encode setting the respective variable $x_i$ to true, whereas $\n
X_i(c_1,\ldots,c_m)$ encode setting the respective variable $x_i$ to false.

Let us consider an arbitrary summand $H_\varphi$ in the
expanded~\eqref{eq:exact-size:sampling:H} corresponding to some variable
assignment $\varphi \colon \set{x_1,\ldots,x_n} \to \{ \mathsf{True}, \mathsf{False} \}$. 
Note that once
we unfold the respective definitions of $X_i(c_1,\ldots,c_m)$ and $\n
X_i(c_1,\ldots,c_m)$, $H_\varphi$ becomes a monomial consisting of $c_j$'s
satisfied by $\varphi$. Each clause $(\alpha_j \lor \beta_j)$ in $F$ consists of
two literals, hence its corresponding variable $c_j$ can occur at most twice in
$H_\varphi$. Consequently, the number of satisfiable assignments satisfying $F$
is equal to
\begin{equation}\label{eq:exact-size:sampling:H1}
  [c_1^{\geq 1} \cdots c_n^{\geq 1}]H(c_1,\ldots,c_n)
\end{equation}
i.e.~to the number of monomials $H_\varphi$ in which each clause variable occurs
at least once. In order to obtain the number~\eqref{eq:exact-size:sampling:H1}
of satisfiable assignments using a single, exact coefficient extraction, we note
that
\begin{equation}
  [c_1^2 \cdots c_n^2]H(c_1,\ldots,c_n) \prod_{i=1}^n (1 + c_i)
  = [c_1^{\geq 1} \cdots c_n^{\geq 1}]H(c_1,\ldots,c_n).
\end{equation}

And so, it is as hard to extract the coefficients of a generating function as
solving a \( \#2 \)\nobreakdash-SAT instance. Nevertheless, it should be noticed
that not every hard enumeration problem automatically corresponds to a hard
uniform random sampling problem.

    In order to complete the proof, we use the celebrated ~\cite[Theorem
6.4]{jerrum1986random} which proves that if there exists a fully polynomial
almost-uniform random sampling algorithm (with an exponentially small error),
then there exists a fully polynomial randomised approximation scheme for the
counting problem as well (within a polynomially small error). Moreover, unless
$\Rp = \Np$, there exists not fully polynomial randomised approximation scheme
for $\#2$\nobreakdash-SAT~\cite{welsh2001complexity}. Hence, indeed the theorem
statement must hold.
\end{proof}

\section{Multi-parametric tuning}\label{section:multiparametric:tuning}
A combinatorial specification typically involves several classes \( \CS C_1,
\ldots, \CS C_m \) which are defined in a mutually recursive manner. Let us
denote \( (\CS C_1, \ldots, \CS C_m) \) as \( \vec{\CS C} \). For tuning and
sampling purposes, one particular class in \( \vec{\CS C} \) is chosen. For
example, consider a system
\begin{equation}
\begin{cases}
    \mathcal C_1
    =
    \Phi_1(\vec{\CS C}, \CS Z_1, \ldots, \CS Z_k)\\
        \vdots  \\
    \mathcal C_m
    =
    \Phi_m(\vec{\CS C}, \CS Z_1, \ldots, \CS Z_k).
\end{cases}
\end{equation}
Suppose that we want to sample the objects from the class \( \mathcal C_1 \) and
we fix the expected values of parameters \( \CS Z_1, \ldots, \CS Z_k \) to be \(
N_1, \ldots, N_k \), respectively. Let \( C_1(z_1, \ldots, z_k), \ldots,
C_m(z_1, \ldots, z_k) \) be the related generating functions. Then, the system
of polynomial equations corresponding to the tuning problem (see
\eg~\cite[Proposition 2.1]{DuFlLoSc}) is given by
\begin{equation}
\label{eq:tuning:system}
    N_i =
    z_i \dfrac{\partial_{z_i} C_1(z_1, \ldots, z_k)}{C_1(z_1, \ldots, z_k)}
    \qquad
    \text{for } i = 1, \ldots, k.
\end{equation}

\begin{figure}[hbt!]
\[
\text{\color{bviolet}\framebox{
Parameters $\boldsymbol z$
}}
    \quad \Rightarrow \quad
\text{\color{bblue}\framebox{
        Expectations \( \mathbb E n_k \)
}}
\]
\centering
\begin{tikzpicture}[>=stealth',level/.style={thick}]
\draw
node[arn_r, minimum height = 0.8cm](x)  at (0,0)   {\( z_1 \)}
node[arn_r, minimum height = 0.5cm](y)  at (0, -2) {\( z_2 \)}
node[arn_r, minimum height = 0.6cm](z)  at (0, -4) {\( z_k \)}
node[arn_n, minimum height = 0.9cm](A)  at (4,0)   {\( \mathbb E n_1 \)}
node[arn_n, minimum height = 1.1cm](B)  at (4, -2) {\( \mathbb E n_2 \)}
node[arn_n, minimum height = 1.0cm](C)  at (4, -4) {\( \mathbb E n_k \)}
;
\path[->] (x) edge [bred,dashed,thick,bend right=20] node {} (A);
\path[->] (x) edge [bred,dashed,thick,bend right=20] node {} (B);
\path[->] (x) edge [bred,dashed,thick,bend right=20] node {} (C);
\path[->] (y) edge [bred,dashed,thick,bend right=20] node {} (A);
\path[->] (y) edge [bred,dashed,thick,bend right=20] node {} (B);
\path[->] (y) edge [bred,dashed,thick,bend right=20] node {} (C);
\path[->] (z) edge [bred,dashed,thick,bend right=20] node {} (A);
\path[->] (z) edge [bred,dashed,thick,bend right=20] node {} (B);
\path[->] (z) edge [bred,dashed,thick,bend right=20] node {} (C);
\end{tikzpicture}
\caption{Dependency of parameters and expectations for multi-parametric tuning.
Circle radii represent the possible values of the parameters.}
    \label{fig:handles:tuning}
\end{figure}
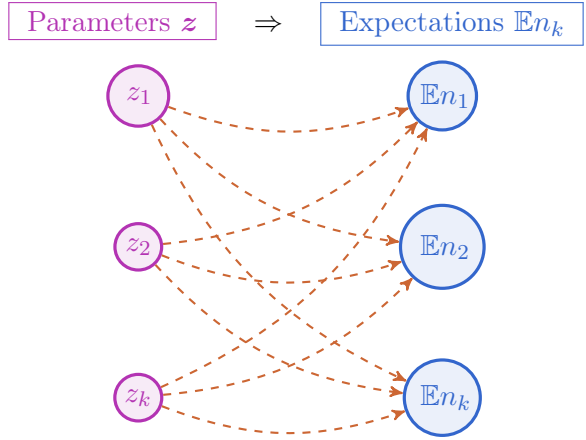

Note that, in general, the values of the tuning parameters cannot be obtained by
independently solving each of the equations~\eqref{eq:tuning:system}. Each of
the functions is depending on all the arguments at the same time (see
also~\cref{fig:handles:tuning}). Hence, we propose an alternative procedure to
achieve this goal.

From a technical point of view, the easiest case for theoretical analysis are
combinatorial specifications corresponding to unambiguous context-free grammars.
Here, the system defining the generating functions becomes a \emph{well-founded
system} of polynomial equations, see~\cite{PiSaSo12}. The most general framework
of Boltzmann sampling comprises much more cases, including labelled objects,
Pólya structures, or first- and second-order differential specifications. There
also exist specifications whose equations include subtractions (related to the
inclusion-exclusion principle), substitutions, and in these cases different
sampling strategies should be applied, \eg recursive sampling or sampling with
rejections, see~\cref{section:recursive}. In this section, we are only concerned
with tuning, setting thus all of these sampling issues aside.

\subsection{Tuning as a convex optimisation problem}
\label{section:tuning:as:convex:optimisation}

It turns out that instead of solving a system of polynomial
equations~\eqref{eq:tuning:system} involving the derivatives of the generating
function, a simpler convex optimisation problem, involving only the values of
the generating functions, can be considered. Having the derivatives of this
function comes as an advantage, because it allows to use first-order subroutines
to solve the optimisation problem. The following theorem contains the most
general form of our tuning approach. For convenience, we will write \( f(\cdot)
\to \min_{\boldsymbol z} \), \( f(\cdot) \to \max_{\boldsymbol z} \) to denote
the minimisation (maximisation, respectively) problem of the target function \(
f(\cdot) \) with respect to the vector variable~\( \vec z \).

\begin{theorem}[Tuning as convex optimisation]
\label{theorem:tuning}
\label{theorem:general}
    Let \( F(z_1, \ldots, z_d) \) be a formal power series with non-negative
coefficients analytic in an open \( d \)-dimensional set \( \Omega \subseteq \{
z_1 > 0, \ldots, z_d > 0 \} \). Assume that the solution of the multi-parametric
tuning problem
    \begin{equation}
        \label{eq:tuning:system:th}
        N_i =
        z_i \dfrac{\partial_{z_i} F(z_1, \ldots, z_k)}{F(z_1, \ldots, z_k)}
        \qquad
        \text{for } i = 1, \ldots, k
    \end{equation}
    belongs to \( \Omega \).

    Let $\boldsymbol{N}$ denote the vector
$(N_1,\ldots,N_d)$. Then, the tuning problem~\eqref{eq:tuning:system:th} is
equivalent to the following convex optimisation problem over real variables \(
\varphi \) and $\boldsymbol{\zeta} = (\zeta_1, \ldots, \zeta_d)$:
    \begin{equation}
        \label{expression:main:optimisation}
        \begin{cases}
            \varphi - \vec N^\top \boldsymbol \zeta \to \min_{\boldsymbol
            \zeta, \varphi}, \\
            \varphi \geq \log F(e^{\boldsymbol \zeta})
        \end{cases}
    \end{equation}
    provided that the arguments of \( F \) meet its domain, and the logarithm
    is well-defined. The respective tuning parameters $\boldsymbol{z}^*$ satisfy
    then $\boldsymbol{z}^* = e^{\boldsymbol{\zeta}}$.
\end{theorem}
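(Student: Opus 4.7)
The plan is to absorb the slack variable~\( \varphi \) into the objective, recognise the resulting unconstrained problem as convex, and then identify its first-order optimality condition with the tuning system~\eqref{eq:tuning:system:th}.

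First I would reduce the epigraph-style formulation~\eqref{expression:main:optimisation} to an unconstrained minimisation in $\vec\zeta$ alone. The objective $\varphi - \vec N^\top \vec\zeta$ is strictly increasing in $\varphi$, while the only constraint $\varphi \geq \log F(e^{\vec\zeta})$ imposes a lower bound on $\varphi$. Hence at any optimum the inequality is saturated, $\varphi^\ast = \log F(e^{\vec\zeta^\ast})$, and the problem becomes
\begin{equation*}
    \Psi(\vec\zeta) := \log F(e^{\vec\zeta}) - \vec N^\top \vec\zeta \;\to\; \min_{\vec\zeta},
\end{equation*}
where $\vec\zeta$ ranges over the image of $\Omega$ under coordinatewise logarithm.

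Second I would invoke \cref{corrolary:convexity}: the map $\vec\zeta \mapsto \log F(e^{\vec\zeta})$ is convex because its Hessian, as established via the log-exp transform of \cref{proposition:expected:value}, is the covariance matrix of the associated Boltzmann-distributed composition and is therefore positive semi-definite on the domain of analyticity. Subtracting the linear form $\vec N^\top \vec\zeta$ preserves convexity, so $\Psi$ is convex.

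Third I would compute the gradient by the chain rule. Since $z_i = e^{\zeta_i}$ gives $\partial z_i/\partial \zeta_i = z_i$, one has
\begin{equation*}
    \partial_{\zeta_i}\Psi(\vec\zeta)
    = \frac{e^{\zeta_i}\,\partial_{z_i} F(e^{\vec\zeta})}{F(e^{\vec\zeta})} - N_i,
\end{equation*}
and setting $\nabla \Psi(\vec\zeta) = \vec 0$ yields exactly \eqref{eq:tuning:system:th} after the substitution $\vec z = e^{\vec\zeta}$. Because $\Psi$ is convex, a vanishing gradient is both necessary and sufficient for a global minimum; the hypothesis that the tuning system admits a solution inside $\Omega$ guarantees that such a critical point exists and, conversely, any minimiser of~\eqref{expression:main:optimisation} must solve the tuning system. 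The correspondence $\vec z^\ast = e^{\vec\zeta^\ast}$ between the two problems is therefore immediate.

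The main obstacle, which the statement already sidesteps through its hypothesis, is the question of existence and attainment: one must know \emph{a priori} that the tuning system has a solution in $\Omega$, so that the corresponding point of $\Psi$ is a finite stationary point rather than a limit reached only on the boundary (where $F$ may fail to be analytic or where $\log F$ may diverge). Under that assumption, the convexity of $\Psi$ promotes this stationary point to a global minimiser, and — if the covariance Hessian is positive definite, which holds whenever the coordinates of $\vec N$ are not affinely degenerate — the minimiser is unique, giving a genuine equivalence rather than a mere implication.
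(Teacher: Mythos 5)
Your proof is correct and follows essentially the same route as the paper's: both reduce the epigraph formulation to the unconstrained minimisation of \( \Psi(\vec\zeta) = \log F(e^{\vec\zeta}) - \vec N^\top \vec\zeta \), invoke the convexity of \( \log F(e^{\vec\zeta}) \) via the covariance-matrix observation of \cref{proposition:expected:value} / \cref{corrolary:convexity}, and identify \( \nabla\Psi = 0 \) with the tuning system under \( \vec z = e^{\vec\zeta} \). You merely run the chain in the opposite direction (from the optimisation problem to the tuning system rather than vice versa) and spell out the saturation of the constraint \( \varphi \geq \log F(e^{\vec\zeta}) \) a bit more explicitly than the paper does; the closing remark on positive-definiteness and uniqueness is a harmless addition not present in the original proof.
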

\begin{proof}
  Using a log-exp transformation, we note that the tuning
problem~\eqref{eq:tuning:system:th} is equivalent to
$\nabla_{\boldsymbol{\zeta}} \log F(e^{\boldsymbol{\zeta}}) = \boldsymbol{N}$,
cf.~\cref{proposition:expected:value}. Since the right-hand side vector
$\boldsymbol{N}$ is equal to $\nabla_{\boldsymbol{\zeta}}(\boldsymbol{N}^{\top}
\boldsymbol{\zeta})$, the tuning problem is further equivalent to
\begin{equation}\label{eq:convex:optimisation:gradient:i}
  \nabla_{\boldsymbol{\zeta}} \left( \log F(e^{\boldsymbol{\zeta}}) -
\boldsymbol{N}^{\top} \boldsymbol{\zeta} \right) = 0.
\end{equation}
Note that the function under the gradient is a sum of a convex and linear
function, and so necessarily convex itself. We can therefore equivalently
express~\eqref{eq:convex:optimisation:gradient:i} as a convex minimisation
problem in form of
\begin{equation}
  \log F(e^{\boldsymbol{\zeta}}) -
  \boldsymbol{N}^{\top} \boldsymbol{\zeta} \to  \min_{\boldsymbol \zeta}.
\end{equation}
This problem can be reduced to a standard
form~\eqref{expression:main:optimisation} by adding an auxiliary
variable \( \varphi \).
\end{proof}

\begin{Remark}
    We do not require the exponents of the generating function to be
non-negative integers. Depending on the specific application, they might, for
instance, be positive real or rational numbers~\cite{hammer2019fixed}.
Even the non-negativity requirement could be omitted as well, as illustrated by a following univariate example.
Let the exponents of the generating function \( F(z) \) belong to the set \( A \) which may potentially include negative elements. Then, the log-exp transform of the function \( F(z) \) is a composition of a convex function and a set of linear functions, which is again, convex:
\[
    \log F(e^z) = \log \left(
        \sum_{s \in A} a_s e^{xs}
    \right).
\]
In contrast, it is crucial that the \emph{coefficients} of the generating functions remain non-negative, as otherwise, the logarithm of the sum of exponents with potentially negative weights \( a_s \) ceases to be convex.
\end{Remark}

In subsequent sections we show that having a combinatorial specification is a
for the generating function \( F(z_1, \ldots, z_d) \) is a great advantage ---
instead of requiring additional oracles providing the values of the generating
function and its derivatives, a more direct approach is available.

\subsection{Unambiguous context-free grammars}
\label{section:cf:grammars}
In the current section we refine our general tuning procedure for \emph{regular}
and \emph{unambiguous context-free} specifications, avoiding any use of external
oracles. Recall that such systems are often used in connection with the
Drmota--Lalley--Woods framework (see~\cite[Section VII.6]{flajolet09}). However,
contrary to~\cite[VII.6]{flajolet09}, we do not distinguish linear and
non-linear cases. Instead, we develop a general result allowing to cover both
cases. Also note that we do not require the system to be strongly connected,
replacing this condition by a weaker requirement that every state is reachable
from the initial one.

\begin{theorem}[Tuning with finite parameter expectation]
\label{theorem:algebraic:tuning}
Let \( \vec{\mathcal C} = \vec \Phi(\vec{\mathcal C}, \vec{\mathcal Z}) \) be a
multi-parametric system with \( \vec{\CS C} = \seq{\CS C_1,\ldots,\CS C_m} \)
and \( \vec \Phi = \seq{\Phi_1, \ldots, \Phi_m} \), where all the functions \(
\Phi_1, \ldots, \Phi_m \) are positive polynomials. Suppose that in the
dependency graph corresponding to \( \vec \Phi \) all the states are reachable from
the initial state \( \mathcal C_1 \). Let \( \vec N = (N_1, \ldots, N_k) \) be
the vector of target atom occurrences of each type. Fix the expectations \( N_i \) of
the parameters of objects sampled from \( \mathcal C_1 \) to \( \mathbb E_{\vec
z} \boldsymbol N = \boldsymbol \nu \). Then, the tuning vector \( \boldsymbol z
\) is equal to \( e^{\vec \xi} \) where \( \vec \xi \) comes from the convex
problem:
\begin{equation}
    \label{eq:optimisation:algebraic}
\begin{cases}
    c_1 - \boldsymbol \nu^\top \boldsymbol \xi \to \min_{\vec \xi, \vec c}\\
    \vec c \geq \log \vec \Phi(e^{\vec c}, e^{\vec \xi}).
\end{cases}
\end{equation}
Hereafter, \( e^{\vec \xi} \) and \( \log \vec \Phi \) denote coordinate-wise
exponentiation and logarithm, respectively.
\end{theorem}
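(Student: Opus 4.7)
The plan is to specialise \cref{theorem:general} by replacing the implicit generating function $C_1(\vec z)$ of the root class with an explicit polynomial constraint system on auxiliary variables $\vec c = (c_1,\ldots,c_m)$ playing the role of $\log \vec C(e^{\vec\xi})$. Applied with $F = C_1$, \cref{theorem:general} already reduces the tuning problem to minimising $\log C_1(e^{\vec\xi}) - \boldsymbol\nu^\top\vec\xi$ over $\vec\xi$, so all that needs to be established is the pointwise identity
\[
    \log C_1(e^{\vec\xi}) = \min\{\, c_1 : \vec c \geq \log\vec\Phi(e^{\vec c}, e^{\vec\xi}) \,\}
\]
together with convexity of the constraint set, after which the joint minimisation in $(\vec c, \vec\xi)$ is immediate.

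The pointwise identity is a Knaster--Tarski argument in the $\log$-domain. Define $T(\vec c) := \log\vec\Phi(e^{\vec c}, e^{\vec\xi})$; because $\vec\Phi$ has only non-negative coefficients, $T$ is coordinatewise monotone in $\vec c$, and the generating-function vector $\log\vec C(e^{\vec\xi})$ is a fixed point of $T$ by the defining system $\vec C = \vec\Phi(\vec C, \vec Z)$. Any post-fixed point $\vec c \geq T(\vec c)$ then dominates the least fixed point coordinatewise (iterating $T$ downwards from such a $\vec c$ produces a decreasing sequence whose limit is a fixed point bounded above by $\vec c$), so in particular $c_1 \geq \log C_1(e^{\vec\xi})$, with equality realised at $\vec c = \log\vec C(e^{\vec\xi})$. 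The reachability hypothesis enters here to ensure that the combinatorially distinguished solution $\vec C(e^{\vec\xi})$ is the least non-negative solution of the polynomial system, and hence coincides with the least fixed point of $T$ rather than with some larger spurious one.

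Convexity of~\eqref{eq:optimisation:algebraic} is immediate: each $\Phi_i$ is a polynomial in $\vec C, \vec Z$ with non-negative coefficients, so $\log\Phi_i(e^{\vec c}, e^{\vec\xi})$ is a log-sum-exp of linear forms in $(\vec c, \vec\xi)$ and is therefore convex; the feasible region is thus convex and the objective $c_1 - \boldsymbol\nu^\top\vec\xi$ is linear. Combining this with the pointwise identity, the joint infimum of $c_1 - \boldsymbol\nu^\top\vec\xi$ over feasible $(\vec c, \vec\xi)$ equals $\min_{\vec\xi}\bigl[\log C_1(e^{\vec\xi}) - \boldsymbol\nu^\top\vec\xi\bigr]$, which by \cref{theorem:general} identifies the optimiser with the tuning vector $\vec z = e^{\vec\xi}$. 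The main obstacle is the fixed-point identification in the second paragraph: monotonicity alone only gives that the combinatorial generating functions form \emph{a} fixed point of $T$, and one needs reachability from $\mathcal C_1$ combined with positivity of the $\Phi_i$ to exclude strictly smaller post-fixed points that would otherwise undercut the lower bound on $c_1$.
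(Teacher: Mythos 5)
Your proof is correct but follows a genuinely different strategy from the paper's. The paper argues directly at the optimum: it first observes that slack in the constraint on $c_1$ lets one decrease $c_1$ and hence the objective, and then uses the reachability hypothesis together with strict monotonicity of $\log\Phi_i(e^{\vec c},e^{\vec\xi})$ in the next variable along a dependency path $c_1\to\cdots\to c_k$ to propagate any remaining slack at $c_k$ back to $c_1$, again contradicting optimality; once all constraints are tight the feasibility system is exactly $\vec C(\vec z)=\vec\Phi(\vec C(\vec z),\vec z)$ and everything collapses to \cref{theorem:general}. You instead marginalise $\vec c$ out of the joint problem by proving the pointwise identity $\log C_1(e^{\vec\xi})=\min\{c_1 : \vec c\geq\log\vec\Phi(e^{\vec c},e^{\vec\xi})\}$ via a Knaster--Tarski argument, which is a cleaner global description of the constraint polytope. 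Two remarks. First, the ``iterate $T$ downwards'' step is delicate in the log domain, where a decreasing sequence has no a priori lower bound; it is more robust in the original coordinates $\vec y=e^{\vec c}\geq\vec 0$, or, more directly still, one can iterate $\vec\Phi$ \emph{upwards} from $\vec 0$: the upward iterates stay coordinatewise below every non-negative post-fixed point and converge to $\vec C(e^{\vec\xi})$. Second, and more substantively, you misattribute the role of reachability. What makes $\vec C(e^{\vec\xi})$ the least non-negative solution is \emph{well-foundedness} of the specification (an implicit standing assumption, since otherwise the generating functions do not exist), not reachability from $\mathcal C_1$; reachability plays no part in the least-fixed-point characterisation. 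In the paper's proof, reachability is exactly what is needed to close slack at an arbitrary $c_k$ and conclude that \emph{all} inequalities are tight at the optimum. Your marginalisation over $\vec c$ sidesteps that structural conclusion entirely, so your proof in fact holds under a weaker hypothesis --- the trade-off being that it says nothing about the optimal values of $c_k$ for $k\neq 1$, whereas the paper's perturbation argument identifies them with $\log C_k(e^{\vec\xi})$.
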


\begin{proof}
    Consider the vector \( \vec z^\ast \) such that
    \(
    \mathbb E_{\vec z^\ast} (\vec N) = \vec \nu.
    \)
Let \( \vec c \) denote the logarithms of the values of generating functions at
point \( \vec z^\ast = e^{\vec \xi^\ast} \). Clearly, for such a choice of the
vectors \( \vec c \) and \( \vec \xi = \vec \xi^\ast \) all inequalities
in~\eqref{eq:optimisation:algebraic} become equalities.

Let us show that if the point \( (\vec c, \vec \xi ) \) is optimal, then all the
inequalities in~\eqref{eq:optimisation:algebraic} become equalities. Firstly,
consider the case when the inequality
\begin{equation}
    c_1 \geq \log \Phi_1(e^{\vec c}, e^{\vec \xi})
\end{equation}
    does not turn to an equality. Certainly, there is a \emph{gap} and the value
\( c_1 \) can be decreased without affecting the validity of other inequalities.
In doing so, the target function value is decreased as well. Hence, the point \(
(\vec c, \vec \xi) \) cannot be optimal.

Now, suppose that the initial inequality does turn to equality, however \( c_k >
\log \Phi_k(e^{\vec c}, e^{\vec \xi}) \) for some $k \neq 1$. Since each of the
states is reachable from the initial state \( \mathcal C_1 \), it means that
there exists a path \( P = c_1 \to c_2 \to \cdots \to c_k \) (indices are chosen
without loss of generality) in the corresponding dependency graph. Note that for
pairs of consecutive variables $(c_i, c_{i+1})$ in $P$, the function $\log
\Phi_i(e^{\vec c}, e^{\vec \xi})$ is strictly monotonic in $c_{i+1}$ (as it is
obtained as a log-exp transform of a positive polynomial and it references
$c_{i+1}$). In such a case we can decrease $c_{i+1}$ so to assure that $c_i >
\log \Phi_i(e^{\vec c}, e^{\vec \xi})$ while the point \( (\vec c, \vec \xi) \)
remains feasible. Decreasing $c_{i+1},c_i,\ldots,c_1$ in order, we finally
arrive at a feasible point with a decreased target function value. In
consequence, \( (\vec c, \vec \xi) \) could not have been optimal to begin with.

So, eventually, the optimisation problem reduces to minimising the expression,
subject to the system of equations
\(
    \vec c = \log \vec \Phi(e^{\vec c}, e^{\vec \xi})
\)
or, equivalently,
\(
    \vec C(\vec z) = \vec \Phi(\vec C(\vec z), \vec z)
\)
and can be therefore further reduced to~\cref{theorem:general}.
\end{proof}

\begin{Remark}
  Let us note that the above theorem extends to the case of labelled structures
with \( \Set \) and \( \Cycle \) operators. For unlabelled Pólya operators
like \( \MSet \) or \(\Cycle \), we have to truncate the specification to bound
the number of substitutions. In consequence, it becomes possible to sample
corresponding unlabelled structures, including partitions, functional graphs,
series-parallel circuits, etc. For more details,
see~\cref{section:implementation:details}.
\end{Remark}

 Singular Boltzmann samplers (also defined in~\cite{DuFlLoSc}) are the limit
variant of ordinary Boltzmann samplers with an infinite expected size of
generated structures. In their multivariate version, samplers are considered
\emph{singular} if their corresponding variable vectors belong to the boundary
of the respective convergence sets. We present an alternative option to tune
such singular samplers corresponding to Drmota--Lalley--Woods
framework~\cite[Section VII.6]{flajolet09}, provided that their corresponding
dependency graphs are strongly connected.

\begin{theorem}[Tuning with infinite parameter expectation]
\label{theorem:singular:tuning}
    Let \( \vec{\mathcal C} = \vec \Phi(\vec{\mathcal C}, \mathcal{Z},
\vec{\mathcal U}) \) be a strongly connected multi-parametric system of positive
polynomial equations with \( \vec{\CS C} = \seq{\CS C_1,\ldots,\CS C_m} \), the
atomic class \(\mathcal{Z}\) marking the corresponding structure size, and \(
\vec{\CS U} = \seq{\CS U_1,\ldots,\CS U_k} \) being a vector (possibly empty) of
distinguished atoms. Assume that the target expected frequencies of the atoms \(
\mathcal U_i \) are given by the vector \( \boldsymbol \alpha \). Then, the
variables \( (z, \vec u) \) that deliver the tuning of the corresponding
singular Boltzmann sampler are the result of the following convex optimisation
problem, where \( z = e^{\xi} \), \( \vec u = e^{\vec \eta} \):
\begin{equation}
\begin{cases}
\xi + \vec \alpha^\top \vec \eta \to \max_{\xi, \vec \eta, \vec c}\\
    \vec c \geq
    \log \vec \Phi(e^{\vec c}, e^{\xi}, e^{\vec \eta}).
\end{cases}
\end{equation}
\end{theorem}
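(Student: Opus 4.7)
The plan is to derive the stated singular tuning problem as the $s \to \infty$ limit of the finite-expectation tuning problem of Theorem~\ref{theorem:algebraic:tuning}, applied with target vector $\vec\nu = s\,(1, \vec\alpha)$ -- that is, with total expected size $s$ and markers in the asymptotic proportions $\vec\alpha$. The feasible set
\begin{equation}
    \mathcal{F} = \{(\vec c, \xi, \vec\eta) : \vec c \geq \log \vec \Phi(e^{\vec c}, e^{\xi}, e^{\vec\eta})\}
\end{equation}
is convex exactly as in the proof of Theorem~\ref{theorem:algebraic:tuning} -- each coordinate constraint is the log-exp transform of a positive polynomial, hence convex -- and, by monotone fixed-point iteration, its projection onto $(\xi, \vec\eta)$ coincides with the closure of the convergence domain of the system.

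Concretely, applying Theorem~\ref{theorem:algebraic:tuning} to the combined atom vector $(\mathcal{Z}, \vec{\mathcal U})$ with target $\vec\nu = s\,(1, \vec\alpha)$ yields
\begin{equation}
    \min_{(\vec c, \xi, \vec\eta) \in \mathcal{F}} \bigl(c_1 - s\xi - s\,\vec\alpha^\top \vec\eta\bigr).
\end{equation}
Dividing the objective by $s$ produces $c_1/s - \xi - \vec\alpha^\top \vec\eta$. The Drmota--Lalley--Woods framework, applicable thanks to the strong-connectivity hypothesis, ensures that the generating function $C_1$ stays finite on the singular boundary, so the optimal value $c_1^{*}(s) = \log C_1(\vec z^{*}(s))$ is bounded uniformly in $s$. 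Passing to the limit $s \to \infty$ annihilates the $c_1/s$ term, and a sign flip turns the minimisation into the maximisation of $\xi + \vec\alpha^\top \vec\eta$ over $\mathcal{F}$ -- precisely the theorem's formulation.

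The main obstacle is justifying this limit rigorously, i.e.\ showing that the finite-$s$ optima converge to a point which solves the limiting problem and which delivers the correct singular tuning. I would invoke a compactness argument: the coordinates $(\xi, \vec\eta)$ stay in the closed, bounded convergence polytope, and the monotone minimality of $\vec c^{*}(s)$ pins down the remaining coordinates. Every subsequential limit then satisfies the KKT conditions of the limiting convex problem; the multiplier associated with the family of constraints $\vec c \geq \log \vec\Phi$ emerges as a positive left Perron--Frobenius eigenvector of the Jacobian $\partial \vec\Phi / \partial \vec c$ with eigenvalue one, placing the optimum on the singular variety $\det(I - \partial \vec\Phi / \partial \vec c) = 0$. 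The remaining first-order conditions in $\xi$ and $\vec\eta$ then translate, via implicit differentiation of $z = \rho(\vec u)$ along this variety, into the classical singular tuning identities $\alpha_k = \partial \log \rho / \partial \log u_k$, which are exactly the conditions that the associated multi-parametric singular Boltzmann sampler delivers the prescribed marker frequencies $\vec\alpha$.
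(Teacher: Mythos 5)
Your approach---deriving the singular problem as the $s\to\infty$ limit of the finite-expectation tuning problem from Theorem~\ref{theorem:algebraic:tuning} with target $\vec\nu = s(1, \vec\alpha)$---is genuinely different from the paper's, which instead proves tightness of the constraints at the optimum and then verifies the first-order condition $\nabla_{\vec\eta}\bigl(\xi(\vec\eta) + \vec\alpha^{\top}\vec\eta\bigr) = 0$ directly, by substituting the Drmota--Lalley--Woods singular expansion $C(z,\vec u) \sim a_0(\vec u) - b_0(\vec u)\bigl(1 - z/\rho(\vec u)\bigr)^t$ into the ratio of coefficient asymptotics for the expected marker frequencies and simplifying via the binomial theorem to obtain $\nabla_{\vec\eta}\log\rho(e^{\vec\eta}) = -\vec\alpha$.

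There is, however, a concrete error in your limiting argument. You claim that the DLW framework ``ensures that the generating function $C_1$ stays finite on the singular boundary,'' so that $c_1^{*}(s)$ is bounded uniformly in $s$. That is true only for the algebraic square-root case $t = 1/2 > 0$. The theorem also covers strongly connected \emph{rational} systems (positive polynomial, hence including linear, equations), where $t = -1$ and $C_1(z,\vec u)\to\infty$ as $z\to\rho(\vec u)$; the paper explicitly treats the case $t<0$. In that regime $c_1^{*}(s)$ is not bounded but grows like $O(\log s)$, since the optimal $z^{*}(s)$ satisfies $1 - z^{*}(s)/\rho = \Theta(1/s)$ and $C_1 \asymp (1-z/\rho)^{t}$. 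The statement you actually need, and can recover with a short estimate, is the weaker $c_1^{*}(s) = o(s)$. Beyond that correction, the passage to the limit still requires more than the compactness sketch: after eliminating $\vec c$ you are minimizing $\tfrac{1}{s}\log C_1(e^{\xi}, e^{\vec\eta}) - \xi - \vec\alpha^{\top}\vec\eta$ over $(\xi,\vec\eta)$, and you must show that the minimizers converge to a maximizer of $\xi + \vec\alpha^{\top}\vec\eta$ over the closure of the convergence domain---a $\Gamma$-convergence-type statement, not an automatic consequence of a subsequential limit. Your concluding Perron--Frobenius/KKT paragraph gestures in the right direction (the singular variety is indeed $\det(I - \partial\vec\Phi/\partial\vec c)=0$), but it is not carried through; the paper bypasses all of this by reading off the optimality condition directly from the singular asymptotics.
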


\begin{proof}
By similar reasoning as in the previous proof, we can show that the maximum is
attained when all the inequalities turn to equalities. Indeed, suppose that at
least one inequality is strict, say
\(
    c_j > \log \Phi_j(e^{\vec c}, e^{\xi}, e^{\vec \eta})
    \).

Because all right-hand sides of each of the inequality are monotonic with
respect to $c_j$, we note that the value \( c_j \) can be slightly decreased by
choosing a sufficiently small distortion \( \varepsilon > 0 \), turning all the
equalities containing \( c_j \) in the right-hand side \( \log \Phi_i(e^{\vec
c}, e^{\xi}, e^{\vec \eta}) \) into strict inequalities. Clearly, we can repeat
this process until all equalities turn into inequalities.

Finally, let us focus on the target function. Again, because all right-hand
sides of each inequality are monotonic with respect to $\xi$, we can slightly
increase its value and increase the target function so to remain inside the
feasible set.

Let us fix \( \vec u = e^{\vec \eta} \). For rational and algebraic grammars,
within the Drmota--Lalley--Woods framework, see for
instance~\cite{drmota1997systems}, the corresponding generating function
singular approximation takes the form
\begin{equation}
\label{eq:asymptotic:singular}
    C(z, \vec u) \sim a_0(\vec u) - b_0(\vec u) \left(1 - \dfrac{z}{\rho(\vec
    u)}\right)^{t}.
\end{equation}
If \( t < 0 \), then the asymptotically dominant term becomes \( -b_0 \left( 1 -
\frac{z}{\rho(\vec u)} \right)^{t}\). In this case, tuning the target expected
frequencies corresponds to solving the following equation as $z \to \rho(u)$:
\begin{equation}
\label{eq:asymptotic:expectation}
\mathrm{diag}(\vec u)
    \dfrac
    { [z^n] \nabla_{\vec u} C(z, \vec u) }
    { [z^n] C(z, \vec u) } = n \vec \alpha.
\end{equation}
Let us substitute the asymptotic expansion~\eqref{eq:asymptotic:singular}
into~\eqref{eq:asymptotic:expectation} to track how \( \vec u \) depends on \(
\vec \alpha \):
\begin{equation}
\mathrm{diag}(\vec u)
    \dfrac
        {
            [z^n]
            t b_0(\vec u)
            \left(
                1 - \dfrac{z}{\rho(\vec u)}
            \right)^{t-1}
            z \dfrac{\nabla_{\vec u}\rho(\vec u)}{\rho^2(\vec u)}
        }
        {
            [z^n]
            b_0(\vec u)
            \left(
                1 - \dfrac{z}{\rho(\vec u)}
            \right)^{t}
        }
        = - n \vec \alpha.
\end{equation}
Only dominant terms are accounted for. Then, by the binomial theorem
\begin{equation}
\mathrm{diag}(\vec u)
 b_0(\vec u)
 \dfrac{t}{n} { t-1 \choose n }
    \dfrac{z\nabla_{\vec u}\rho(\vec u)}{\rho^2(\vec u)}
    b_0(\vec u)^{-1}
    { t \choose n}^{-1}
    = - \vec \alpha.
\end{equation}
With \( z = \rho(\vec u) \), as \( n \to \infty \),
we obtain
after cancellations
\begin{equation}
    \mathrm{diag}(\vec u)
    \dfrac
        {\nabla_{\vec u} \rho(\vec u)}
        {\rho(\vec u)}
    = - \vec \alpha
\end{equation}
which can be rewritten as
\begin{equation}\label{eq:asymptotic:expectation:two}
    \nabla_{\vec \eta} \log \rho( e^{\vec \eta} ) = -\vec \alpha.
\end{equation}
Passing to exponential variables~\eqref{eq:asymptotic:expectation:two} becomes
\begin{equation}
    \nabla_{\vec \eta} ( \xi(\vec \eta) + \vec \alpha^\top \vec \eta ) = 0.
\end{equation}
As we already discovered, the dependence \( \xi(\vec \eta) \) is given by the
system of equations because the maximum is achieved only when all inequalities
turn to equations. That is, tuning the singular sampler is equivalent to
maximising \( \xi + \vec \alpha^\top \vec \eta \) over the set of feasible
points.
\end{proof}

\begin{Remark}
    For ordinary and singular samplers, the corresponding feasible set remains
the same; what differs is the optimised target function. Singular samplers
correspond to imposing an infinite target size. In practice, however, the
required singularity is almost never known \emph{exactly} but rather calculated
up to some feasible finite precision. The tuned structure size is therefore
enormously large, but still, nevertheless, finite. In this context, singular
samplers provide a natural \emph{limiting} understanding of the tuning
phenomenon and as such, there are several possible ways of
proving~\cref{theorem:singular:tuning}.
\end{Remark}

\autoref{fig:binary:picture} illustrates the feasible set for the class of
binary trees and its transition after applying the log-exp transform, turning
the set into a convex collection of feasible points.  In both figures, the
singular point is the rightmost point on the plot. Ordinary sampler tuning
corresponds to finding the tangent line which touches the set, given the angle
between the line and the abscissa axis.
\begin{figure}[hbt!]
    \begin{center}
        \includegraphics[width=0.49\textwidth]{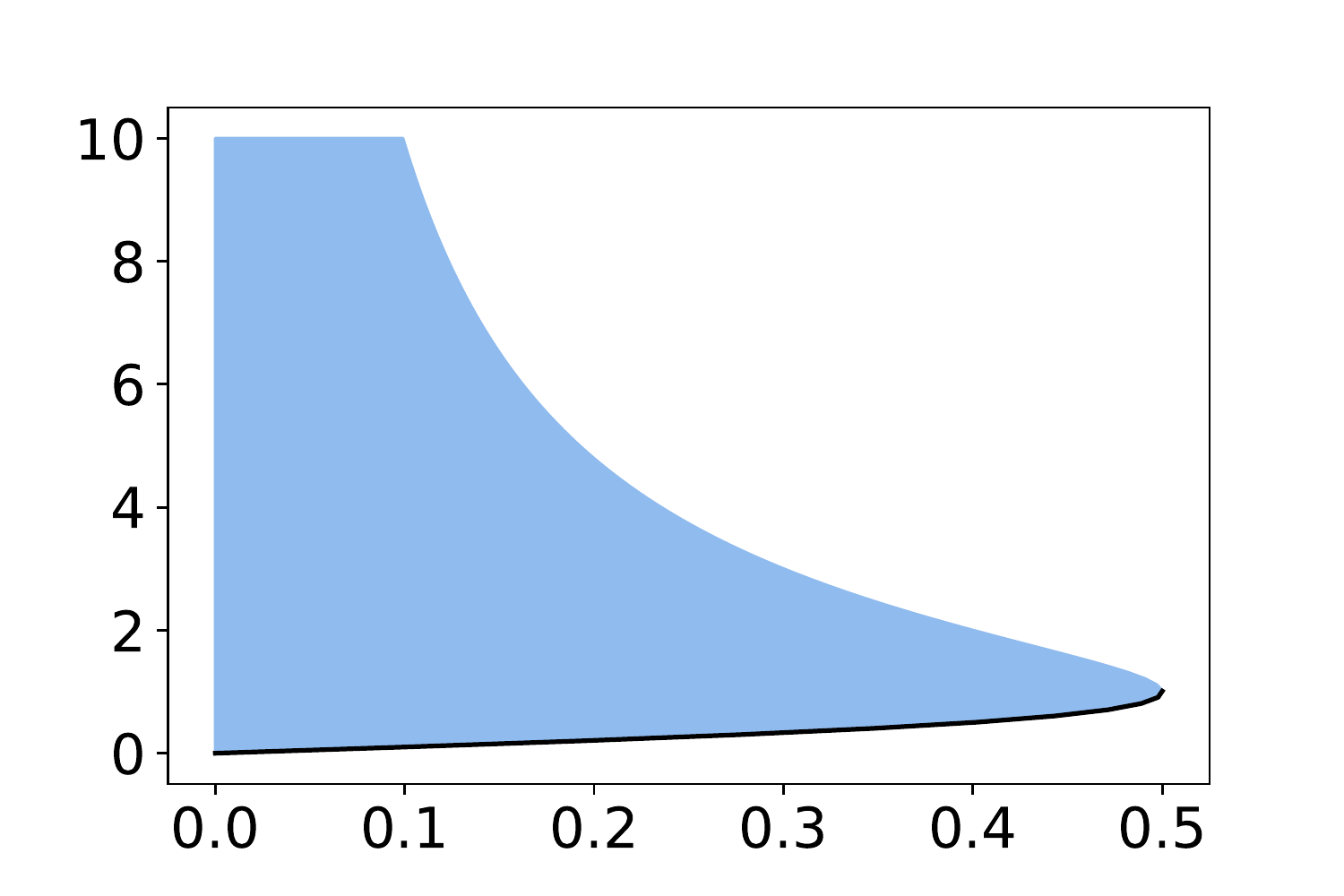}
        \includegraphics[width=0.49\textwidth]{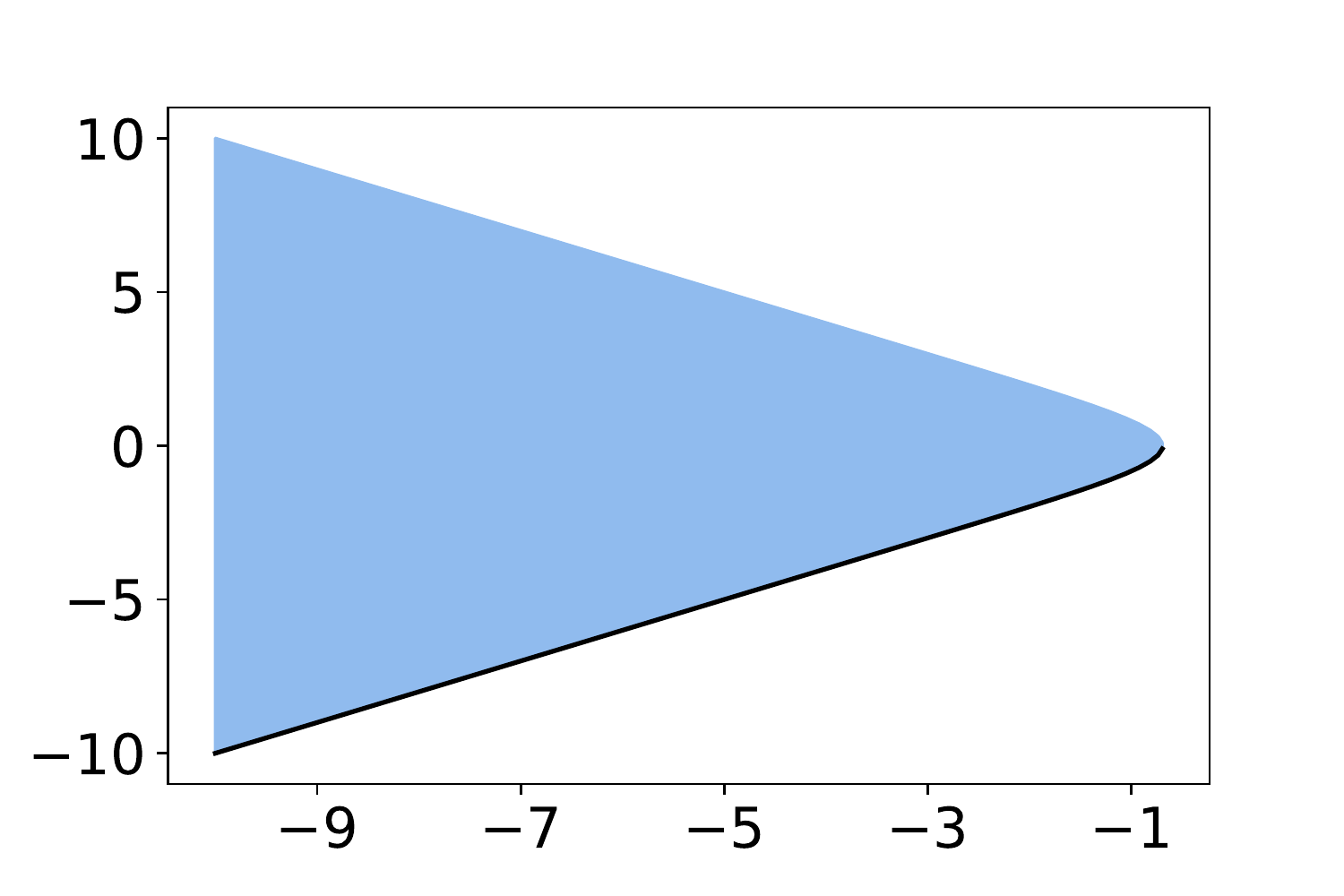}
    \end{center}
    \caption{Binary trees \( B \geq z + zB^2 \) and log-exp transform of the
    feasible set. The black curve denotes the principal branch of the generating
function $B(z)$ corresponding to the class of binary trees.}
    \label{fig:binary:picture}
\end{figure}

\begin{Remark}
    As an interesting by-product, our tuning algorithm provides a way of
obtaining the singularities of a system and the values of the generating
functions at the point of this singularity.
\end{Remark}

\begin{Remark}
    It is also possible to consider singular tuning the case of non-strongly
connected specifications. However, practically speaking, it should be noted that
a notion of a \emph{singular sampler} for non strongly-connected specifications
such as
    \begin{equation}
        F(z) = \dfrac{1}{1 - 2T(z)} \quad \text{and} \quad T(z) = z e^T(z)
      \end{equation}
    is ambiguous --- both singular samplers for \( T \) and \( F \) admit
    different values of the variable \( z \). For $T$ its \( z = e^{-1} \) whereas
    for \( F \) we have \( z = e^{-1/2}/2 \).

If the substitutions in the dependency graph of the specification include only
\emph{subcritical compositions} (see~\cite[Section VI.9]{flajolet09}) it is
possible to incrementally tune its parts in topological order. Further
theoretical analysis of singular samplers involving supercritical compositions
is somewhat more complicated, but can be developed as well. Nevertheless, the
easiest practical way to derive a singular tuner is to tune the target class
with a large, yet finite object size.
\end{Remark}

\subsection{Labelled and unlabelled structures}
Systems originating from labelled specifications, i.e.~whose generating
functions are typically of exponential type, feature such admissible operators
as $\Set$ or $\Cycle$, both in their unrestricted and cardinality restricted
variants. Consider a labelled multi-parametric combinatorial class \( \mathcal A
\) with a generating function \( A(\vec z) \). Then, the resulting exponential
generating functions obtained using these operators take form
\begin{gather}
  \begin{aligned}
    \Set\nolimits_{k}(\mathcal A)(\vec z) &= \dfrac{A(\vec z)^k}{k!} \quad&
    \Cycle\nolimits_k(\mathcal A)(\vec z) &= \dfrac{A(\vec z)^k}{k}\\
    \Set(\mathcal A)(\vec z) &= e^{A(\vec z)} \quad& \Cycle(\mathcal A)(\vec z) &=
    \log \dfrac{1}{1 - A(\vec z)}.
  \end{aligned}
\end{gather}
Classes whose definition involves one of the above operators can be incorporated
into the convex optimisation problem using a log-exp transformation \( F(z)
\mapsto e^{\varphi} \) similarly to the case context-free grammars,
see~\cref{section:cf:grammars}. Broadly speaking, the application of
such operators results in a composition with one of the basic functions
  \begin{equation}
    \dfrac{x^k}{k!}, \quad
    \dfrac{x^k}{k}, \quad
    e^x \quad \text{and} \quad
    \log \dfrac{1}{1 - x}
  \end{equation}
  expressing, respectively, the exponential generating functions for the class
of labelled sets with \( k \) elements, labelled cycles with \( k \) elements,
and both unrestricted labelled sets and cycles.

On the other hand, ordinary generating functions, used for enumeration of
unlabelled structures, feature such operators as $\MSet$, $\PSet$, and $\Cycle$,
standing for the multiset, set, and a cycle constructions, respectively. These,
in contrast, are are evaluated differently than their labelled counterparts.
Specifically, if applied to a class with an ordinary generating
function \( A(\vec z) = \sum_{\vec n \succ \vec 0} a_{\vec n} \vec z^{\vec n}
\), we obtain, respectively,
  \begin{equation}\label{eq:unlabelled:structures:expr}
    \exp\left(
        \sum_{k \geq 1} \dfrac{A(\vec z^k)}{k}
    \right), \quad
    \exp\left(
        \sum_{k \geq 1} \dfrac{(-1)^{k-1}A(\vec z^k)}{k}
    \right) \quad \text{and} \quad
    \sum_{k \geq 1}
    \dfrac{\varphi(k)}{k} \log \dfrac{1}{1 - A(\vec z^k)}.
  \end{equation}
  Here \( \varphi(k) \) denotes Euler's totient function.
  
Let us note that bot the first and the third expressions, after log-exp
transformations, become convex. The resulting infinite series can be then
truncated at a finite threshold, given the fact that if the corresponding
singular value $\boldsymbol{\rho} < 1$, the sequence \( A(\vec z^k) \) converges
at geometrical speed to \( A(\boldsymbol{0}) \). Such a truncation is common
practice and has been applied a number of time in the context of sampling
unlabelled structures, see \eg~\cite{BodLumRolin,flajolet2007boltzmann}. A more
detailed discussion regarding these transformations is given
in~\cref{section:implementation:details}.

Unfortunately, the remaining \( \PSet \) operator, in its aforementioned series
form with negative coefficients~\eqref{eq:unlabelled:structures:expr}, does not
easily fit the convex programming framework. In principle, an alternative form
could be used
  \begin{equation}
    \PSet(\mathcal A)(z) =
    \prod_{\vec n \succ \vec 0}
    \left(
        (1 + z_1^{n_1})
        \cdots
        (1 + z_d^{n_d})
    \right)^{a_{\vec n}}
  \end{equation}
though in this case, it is not clear how to convert the problem into a polynomially
tractable form.

\subsection{Increasing trees}
Boltzmann samplers for first- and second-order differential specifications have
been developed in~\cite{bodini2012boltzmann,bodini2016increasing}. In
particular, in~\cite{bodini2012boltzmann}, the authors solve the problem of
Boltzmann sampling from specifications of type \( \CS T' = \CS F(\CS Z, \CS T)
\). There exist several particular cases of such a differential specification
which admit an explicit solution of the corresponding differential equation. For
example, a differential equation is said to be \emph{autonomous} if it does not depend explicitly on the independent
variable, i.e.~\( \CS T' = \CS F(\CS T) \). In this case, the underlying differential
equation can be solved by separation of variables:
\begin{equation}
    \label{eq:autonomous:solution}
    \begin{cases}
    T'(z) = F(T(z)) \\
    T(0) = t_0
    \end{cases}
    \quad \Rightarrow \quad
    \int_{t_0}^{T} \dfrac{dt}{F(t)} = z(T).
\end{equation}
The final solution \( T(z) \) is obtained by inverting \( z(T) \), which, in the
case of a differential equation with combinatorial origin, can be obtained using
binary search.

Several different strategies for evaluating \( \int\frac{1}{F(t)} \) may be also
suggested. For instance, if \( F(t) \) is a polynomial in \( t \), all its
complex roots (together with multiplicity structure) can be efficiently found
through a numerical procedure\footnote{ Efficient polynomial root-finding is
directly related to Smale's 17th problem about finding complex solutions of
complex polynomial systems, recently positively solved by Lairez,
see~\cite{lairez2017deterministic}.}, see~\cite{zeng2005computing}. In the
context of multi-parametric tuning, we are mostly interested in the case when \(
F(T) \) also depends on auxiliary variables \( \vec u=(u_1, \ldots, u_d) \),
which need to be tuned. In this case, the roots of the polynomial \( F(T) \)
become dependent on \( \vec u \).

\begin{theorem}[Multi-parametric increasing trees]
    \label{theorem:increasing:trees}
Suppose that the generating function \( T(z, \vec u) \) corresponding to a
family of increasing trees is described by a functional equation
  \begin{equation}
    T'_z(z, \vec u) = F(T(z, \vec u), \vec u) \quad \text{with} \quad T(0) = t_0
  \end{equation}
and that \( T \) is a formal power series with non-negative coefficients, \(
\vec u = (u_1, \ldots, u_d) \).

Let $N_0, N_1,\ldots,N_d$ be the excepted size, and expected parameter value of
$u_1, \ldots,u_d$, respectively. Then, the tuning problem is equivalent to a convex
optimisation problem over real variables \( \varphi, \zeta, \eta_0, \ldots,
\eta_d \):
\begin{equation}
    \begin{cases}
        \varphi - N_0 \zeta - N_1 \eta_1 - \cdots - N_d \eta_d \to \min,\\
        \zeta \leq
        \log \displaystyle\int\limits_{t_0}^{e^\varphi}
        \dfrac{dt}{F(t, e^{\eta_1}, \ldots, e^{\eta_d})}
    \end{cases}
\end{equation}
where the solution of the tuning problem is obtained by assigning
  \begin{equation}
    z = e^{\zeta}, \quad u_1 = e^{\eta_1},
    \quad \ldots, \quad u_d = e^{\eta_d}, \quad \text{and} \quad
    T(z, u_1, \ldots, u_d) = e^{\varphi}.
  \end{equation}
\end{theorem}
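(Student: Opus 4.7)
The plan is to treat the generating function $T(z,\vec u)$ as the object $F$ to which \cref{theorem:tuning} is applied, and then use the separation-of-variables representation from~\eqref{eq:autonomous:solution} to rewrite the implicit constraint $\varphi\ge\log T(e^\zeta,e^{\vec\eta})$ as an explicit inequality involving only the given function $F(t,\vec u)$ and the integral $\int_{t_0}^{\cdot}dt/F(t,\vec u)$.

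First I would set $F^\star(z,\vec u) := T(z,\vec u)$; by assumption this is a formal power series with non-negative coefficients, analytic at the relevant point. Applying \cref{theorem:general} with target expectations $\vec N = (N_0,N_1,\ldots,N_d)$ immediately rephrases the tuning problem as minimising $\varphi - N_0\zeta - N_1\eta_1 - \cdots - N_d\eta_d$ subject to $\varphi\ge\log T(e^\zeta,e^{\vec\eta})$, with the recovery $z=e^\zeta$, $u_i=e^{\eta_i}$, $T=e^\varphi$. So far this is exactly the \emph{abstract} tuning programme; the content of the theorem is that the implicit constraint can be swapped for the explicit integral form.

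For the rewriting step I would invoke~\eqref{eq:autonomous:solution}, which (with $\vec u$ as a parameter) gives the implicit identity
\begin{equation}
  \int_{t_0}^{T(z,\vec u)} \frac{dt}{F(t,\vec u)} = z.
\end{equation}
Because $T$ has non-negative coefficients, it is strictly increasing in $z\ge 0$, and correspondingly $F(t,\vec u)>0$ on the relevant range of $t$, so the integrand $1/F(t,\vec u)$ is positive and the left-hand side is a strictly increasing function of its upper limit. Hence for each fixed $\vec u$,
\begin{equation}
  T(e^\zeta,e^{\vec\eta})\le e^\varphi
  \;\Longleftrightarrow\;
  e^\zeta \le \int_{t_0}^{e^\varphi}\frac{dt}{F(t,e^{\vec\eta})}
  \;\Longleftrightarrow\;
  \zeta \le \log\int_{t_0}^{e^\varphi}\frac{dt}{F(t,e^{\vec\eta})}.
\end{equation}
Thus the feasible set of the abstract programme and that of the stated programme coincide, and since the objective functions agree, the two optimisation problems are equivalent.

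Convexity then comes for free: the feasible set is, by the equivalence just established, the same set as the one in \cref{theorem:general}, which is convex because $(\zeta,\vec\eta)\mapsto\log T(e^\zeta,e^{\vec\eta})$ is convex (as observed in \Cref{corrolary:convexity}). Alternatively one may verify directly that $(\varphi,\vec\eta)\mapsto\log\int_{t_0}^{e^\varphi}dt/F(t,e^{\vec\eta})$ is concave on the feasible domain, but I would avoid this detour. To conclude that at the optimum the inequality is saturated (so that the recovery $T=e^\varphi$ is correct), I would reuse the argument from the proofs of \cref{theorem:algebraic:tuning,theorem:singular:tuning}: if the inequality were strict at a candidate optimum, one could decrease $\varphi$ slightly while remaining feasible (monotonicity of the integral in its upper limit), strictly decreasing the objective and contradicting optimality. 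The main subtlety I anticipate is precisely this domain check, namely ensuring that the integral is well-defined on $[t_0,e^\varphi]$ and that $F(t,\vec u)$ does not vanish there; under the combinatorial assumption that $T$ is analytic at the target tuning point this is automatic, since the singularity of $T$ corresponds exactly to a zero of $F$ being reached, which lies strictly beyond the feasible $\varphi$.
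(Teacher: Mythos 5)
Your proposal is correct and follows essentially the same route as the paper's proof: apply \cref{theorem:general} to get the abstract programme with constraint $\varphi \geq \log T(e^\zeta, e^{\vec\eta})$, then use the separation-of-variables identity $\int_{t_0}^{T(z,\vec u)} dt/F(t,\vec u) = z$ together with monotonicity in the upper limit to rewrite this constraint in the explicit integral form. The only cosmetic difference is that the paper explicitly argues joint concavity of the inverse function $Z$ via the hypograph--epigraph correspondence, whereas you inherit convexity directly from the set identity with the abstract programme, which is a slight streamlining of the same idea.
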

\begin{proof}
Following~\cref{theorem:tuning}, multi-parametric tuning is equivalent to the
following convex optimisation problem over real variables \( \varphi, \zeta,
\eta_1, \ldots, \eta_d \):
  \begin{equation}
    \begin{cases}
        \varphi - N_0 \zeta - N_1 \eta_1 - \cdots - N_d \eta_d \to \min,\\
        \varphi \geq \log T(e^{\zeta}, e^{\eta_1}, \ldots, e^{\eta_d})
    \end{cases}
  \end{equation}
where \( T(z, u_1, \ldots, u_d) \) is the solution of differential equation
\(
    T'_z(z, \vec u) = F(T(z, \vec u), \vec u)
\) with initial conditions \( T(0, \vec u) = t_0 \).
The target solution is given by
\( (z, u_1, \ldots, u_d) = (e^{\zeta}, e^{\eta_1}, \ldots, e^{\eta_d}) \)
and \( F(z, u_1, \ldots, u_d) = e^{\varphi} \).
Let us denote by \( Z(\tau, \eta_1, \ldots, \eta_d) \) the inverse function
of \( \log T(e^\zeta, e^{\vec \eta}) \) with respect to \(
\zeta \), so that
  \begin{equation}
    \log T(e^{Z(\tau, \eta_1, \ldots, \eta_d)}, e^{\eta_1}, \ldots, e^{\eta_d})
    = \tau.
  \end{equation}
Since \( T \) is a formal power series with non-negative coefficients, the
function \( \log T(e^\zeta, e^{\eta_1}, \ldots, e^{\eta_d}) \) is convex and
increasing in both \( \zeta \) and \( \vec \eta \). Therefore, its inverse
function with respect to \( \zeta \) is a concave increasing function. Moreover,
the function \( Z(\tau, \eta_1, \ldots, \eta_d) \) is jointly concave in
\emph{all} of its arguments That is because a function \( f(\vec z) \) is
concave if and only if its hypograph \( \mathbf{hyp} f = \{ (y, \vec z) \colon y
\leq f(\vec z) \} \) is a convex set.

The hypograph of the inverse function \( Z(\tau, \vec \eta) \) directly
corresponds to the epigraph of the initial function \( \log T(e^z, e^{\vec
\eta}) \). Consequently, a convex constraint \( \varphi \geq \log T(e^{\zeta},
e^{\eta_1}, \ldots, e^{\eta_d}) \) can be replaced by a different convex
constraint obtained by taking the inverse with respect to \( \zeta \), resulting
in a new convex optimisation problem
  \begin{equation}
    \begin{cases}
        \varphi - N_0 \zeta - N_1 \eta_1 - \cdots - N_d \eta_d \to \min,\\
        \zeta \leq Z(\varphi, \eta_1, \ldots, \eta_d).
    \end{cases}
  \end{equation}
In order to construct \( Z(\varphi, \eta_1, \ldots, \eta_d) \), we use the
solution of the autonomous differential equation,
see~\eqref{eq:autonomous:solution}. Denote by \( z(t, \vec u) \) the inverse of
the generating function \( T(z, \vec u) \), \ie
  \begin{equation}
    T(z(t, \vec u), \vec u) = t
  \end{equation}
  and recall that
\( z(t, \vec u) \) is given by
  \begin{equation}
    z(t, \vec u) = \int_{t_0}^t \dfrac{d \tau}{F(\tau, \vec u)}.
  \end{equation}
Finally, a direct calculation shows that
\(
    Z(\varphi, \eta_1, \ldots, \eta_d) = \log z(e^\varphi, e^{\eta_1}, \ldots,
    e^{\eta_d})
\).
\end{proof}

\begin{Remark}
  With an external oracle available, systems of differential equations
involving more than one variable can still be tuned using~\cref{theorem:tuning}.
However, there is little hope to generalise~\cref{theorem:increasing:trees} onto
multivariate differential equations, even if we assume that all its differential
equations are autonomous. In fact, by introducing an additional dimension to the
problem, any first-order system of differential equations
\begin{equation}
\frac{d}{dz} \vec y(z) = \vec F(z, \vec y(z))
\end{equation}
can be transformed into a system of autonomous
differential equations. Moreover, starting from dimension four, systems of
functional equations can admit solutions which exhibit chaotic behaviour,
see~\cite[Remark VII.51]{flajolet09}.

On the other hand, any system of differential equations of arbitrarily high
order
\begin{equation}
\vec y^{(n)}(z) = \vec F(z, \vec y(z), \vec y^{(1)}(z), \ldots, \vec
y^{(n-1)}(z))
\end{equation}
    can be reduced to a system of first-order differential equations by
expanding the dimension space, and therefore, Boltzmann samplers for such
specifications may apply. As discussed in~\cite{bodini2012boltzmann}, for such
systems, only the first sampling step is the most expensive one, because it
requires to generate a random variable defined on the interval \( [0, \rho) \),
where \( \rho \) is the radius of convergence of the formal power series. For
all the consecutive operations, the support of the required random variables is
separated from \( \rho \), so a variety of approximation methods, including
Runge--Kutta can be applied.
\end{Remark}

\subsection{Other types of combinatorial structures}
The general technique described in~\cref{section:tuning:as:convex:optimisation}
can be applied to any analytic multivariate generating function provided that the solution to the multiparametric tuning problem lies inside the respective domain of analyticity.
It can be applied even to those
coming from somewhat exotic systems, including partial differential equations,
systems with negative coefficients, catalytic equations, systems with
non-trivial substitutions, etc. As long as the oracle providing values and the
derivatives of the generating functions is given, the source of the equations is
irrelevant.

Still, for most of these systems no efficient oracle is known. Typically, the
following methods, which could be called the \emph{oracles} for simplicity, can be
used for the mentioned types of functional equations
\begin{itemize}
    \item Runge--Kutta method for ordinary differential equations,
    \item Grid approximation methods for partial differential equations,
    \item Gr\"obner bases for systems of polynomial equations, or
    \item Plain coefficient-wise evaluation for arbitrary systems.
\end{itemize}

All of these methods, however, do not guarantee polynomial complexity in terms
of the bit length of the target size and the number of equations. When
additional parameters are introduced, then the approach that we propose
in~\cref{section:tuning:as:convex:optimisation} reduces the number of
equations for an oracle to solve, but adds the optimisation procedure on the
top. Specifically, with this approach, the tuning system of
equations~\eqref{eq:tuning:system:th} is not anymore included as an input to
the oracle, and the tuning parameters are chosen by the optimisation
procedure, and so are considered fixed real numbers from the viewpoint of the
oracle.

\begin{Remark}
\label{remark:non:tunable}
It is worth noting that for certain combinatorial classes beyond the scope of the current paper, the tuning problem does not have a solution in the analyticity domain. The simplest non-trivial example (with an infinite number of objects in a class) is the case of unrooted trees \( U(z) = \sum_{n \geq 0} n^{n-2} \frac{z^n}{n!} \): near its dominant singular point \( z = e^{-1} \), the Puiseux expansion takes form
\[
    U(z) = \dfrac{1}{2} - (1 - ez) + \dfrac{2^{3/2}}{3} (1 - ez)^{3/2}
    + O((1-ez)^2).
\]
This means that the maximal possible expected value of an unrooted tree generated from Boltzmann distribution is equal to
\[
    \lim_{z \to e^{-1}}
    \dfrac{z \partial_z U(z)}{U(z)} = 2.
\]
However, for most examples of this kind, the Boltzmann sampler itself is not directly available, not speaking of the fact that there would be no practical motivation to apply it for expectations higher than the maximal available value.

Still, there exist techniques different from the Boltzmann sampling which can be used to draw the objects from such classes in a controlled manner.
The reason why the solution does not exist in the above example is because locally around the singularity \( \rho \), the singular term of the generating function takes form
\( (1 - \frac{z}{\rho})^\alpha \), where \( \alpha > 1 \). By repeatedly applying pointed derivative to such a class, the singular term can be modified, and the exponent \( \alpha \) shall be consequently replaced by a value strictly smaller than \( 1 \). From the combinatorial viewpoint, pointed derivative serves to distinguish atoms in the underlying object, which can be un-distinguished later after the sampling is performed. The possible drawbacks of this approach are that
firstly, successive pointing can increase the size of the grammar,
and secondly, the resulting distribution is not anymore Boltzmann, which is not surprising, because the tuning problem does not have a solution in the Boltzmann case.

It is still important to discover and understand controlled sampling techniques in the cases not covered by admissible tuning in the sense of~\eqref{eq:tuning:system:th}: when inclusion-exclusion is used, or even in more sophisticated cases involving stretched exponents or any kinds of essential singularities.
\end{Remark}

\section{Complexity of convex optimisation}
\label{section:barriers}
In the previous section, we have reduced the problem of multiparametric tuning
to a problem of convex optimisation. In this section, we provide the
complexity results, case by case. Let us introduce the key elements of the
interior-point optimisation and discuss how to evaluate its complexity.

A convex optimisation problem can be expressed as minimisation of a linear multivariate function (or, in an equivalent formulation, a convex one) subject to a set of
constraints, expressed as inequalities of the form \( f_i(\vec z) \leq 0 \) involving multivariate convex functions \( f_i(\vec z) \). In addition, affine inequalities and linear equations are often treated separately.
Each of these constraints
describes a convex set, and therefore, an intersection of these sets is also
convex. A classical convex optimisation program can be formulated as
\begin{equation}
\label{eq:convex:opt}
    \begin{cases}
        \vec c^\top \vec z \to \min_{\vec z}\\
        f_i(\vec z) \leq 0 \text{ for } i = 1, \ldots, m
    \end{cases}
\end{equation}
where \( f_1, \ldots, f_m \) are convex functions of the vector argument \( \vec
z \). Such programs are widely believed to be \emph{solvable} in polynomial
time, due to the popularisation of interior-point methods. Nevertheless, a
detailed answer regarding the algorithmic complexity of convex optimisation
needs a careful investigation.

For instance, if \( f_i(\vec z) \) are linear functions with integer
coefficients, then the solution to the optimisation problem is a rational
number. In this case the problem is known to be solvable \emph{exactly} using \(
O(n^{3.5} L) \) arithmetic operations, where \( n \) denotes the number of
variables, and \( L \) is the bit length of the coefficient
representation~\cite{karmarkar1984new}. Remarkably, the existence of a
polynomial algorithm whose arithmetic complexity does not depend on \( L \)
(so-called \emph{strongly-polynomial algorithms}) is listed among 18 problems in
Smale's celebrated list~\cite{smale1998mathematical}. If the functions \(
f_i(\vec z) \) are not necessarily linear, the optimal solution does not have to
be a rational number and so we usually ask for an \( \varepsilon
\)-approximation, instead.

Nesterov and Nemirovskii~\cite{nesterov1994interior} developed a theoretically
and practically efficient framework covering a large subset of convex programs.
Their method relies on the notion of a \emph{self-concordant barrier} whose
construction depends on the problem formulation and needs to exploit the global
structure of the optimisation problem. The number of Newton iterations required
by their method to solve the optimisation problem~\eqref{eq:convex:opt} can be
bounded by
\begin{equation}
    O\left(
    \sqrt{\nu} \log\left(
        \dfrac{\nu \mu_0}{\varepsilon}
    \right)
    \right)
  \end{equation}
where \( \varepsilon \) is the required precision, \( \nu \) is the complexity
of the constructed barrier (typically proportional to the problem dimension),
and \( \mu_0 \) is related to the choice of the starting point. More
specifically, the value of the target function is guaranteed to lie within an \(
\varepsilon \)-neighbourhood of the optimal value. Since \( \mu_0 \) does not
depend on the target precision, it is often omitted in the final
analysis~\cite{potra2000interior}. Further on, we compute \( \nu \) for
various types of combinatorial specifications.

The cost of each Newton iteration is a sum of the two parts: the cost of the
composition of the matrix of second derivatives and the cost of the inversion
of this matrix. In~\cref{section:self:concordant}, we specify in more detail
what is a Newton iteration in the interior-point method. While the cost of the
inversion part is assumed to be cubic in the number of variables,
the cost of the composition of the matrix of second derivatives depends on the problem
structure.

One of the most significant achievements of interior-point programming was a
proof of existence of a \emph{universal family of barriers}, which admits a \(
O(n) \) self-concordance complexity for any convex body in \( \mathbb R^n \).
Unfortunately, such an existence result seems to be currently only of
theoretical interest --- a construction of such a barrier requires computing the
convex body's volume, which in itself is known to be an NP-hard problem. In
practice, for each concrete convex optimisation problem the barriers have to be
constructed separately. There is no known constructive, general-purpose
polynomial-time barrier construction algorithm for convex optimisation problems.

To summarise, the practical complexity of the convex optimisation is determined
by the complexity parameter \( \nu \) of the associated self-concordant barrier,
which requires the problem structure to be exploited in order to be constructed.
More details on the algorithmic complexity of convex optimisation methods can be
found in~\cite{potra2000interior}.

\subsection{Disciplined Convex Programming}
Disciplined Convex Programming ({\sf DCP}, sometimes also referred to as {\sf
CVX}) is another framework meant for modelling convex optimisation
problems~\cite{grant2006disciplined}. Both the objective and constraints are
build using certain \emph{atomic expressions} such as $\log(\cdot)$,
$\|\cdot\|_2$, or $\log \sum e^{\cdot}$, and a restricted set of
\emph{composition rules} following basic principles of convex analysis. The
inductive definition of expressions allows the framework to track their
curvature and monotonicity, and, in consequence, automatically determine whether
the given program is indeed a valid convex program. Essentially, {\sf DCP} can
be viewed as both a domain-specific language and a compiler which transforms the
given convex program to a mixture of linear, quadratic, and exponential conic
optimisation problems. Such problems are passed to corresponding solvers in the
necessary standard form. {\sf DCP} covers therefore a strict subset of convex
programs of Nesterov and Nemirovskii's framework, but serves well as a
prototyping interface for provably convex programs. In our implementation,
see~\Cref{sec:paganini}, we rely on {\sf DCP} with two particular conic solvers,
a second-order Embedded Conic
Solver~\cite{domahidi2013ecos} and recently developed first-order Splitting
Conic Solver~\cite{o2016conic}.

Some of the classical combinatorial constructions, for example \( \MSet \), \(
\Cycle \) or \( \Set_{>0} \) can be expressed in {\sf DCP}, however involve
using (at least in principle) an infinite amount of summands. Some of the
operators, for example, \( \MSet \) can be efficiently represented in a
truncated form, cf.~\cite{BodLumRolin}, but for others it is intrinsically
impossible to provide a reasonable truncation level, since a large number of
summands take non-negligible values. Therefore, for such constructions {\sf DCP}
is not the preferable, and the classical interior-point methods should be selected instead.
In what follows, we provide some additional theoretical
background on self-concordant barriers and propose our constructions which can
be further used to cover these operations.

\subsection{Self-concordant barriers}
\label{section:self:concordant}
While the precise descriptions of the interior-point optimisation schemes and
the definitions of the self-concordant barriers can be found
in~\cite{nemirovski2004interior}, we only need to borrow a couple of statements.
We intentionally do not give the definition of a self-concordant barrier, but
instead use a sufficient construction condition. More specifically, we rely on
the fact that self-concordant barriers are functions assigned to various convex
sets in \( \mathbb R^n \) and we will only need to construct such sets for
\emph{epigraphs} of convex functions, defined as
\begin{equation}
\epi f = \{
(t, x) \in \mathbb R^2 \, \mid \,
f(x) \leq t
\}.
\end{equation}

\begin{lemma}[{\cite[Proposition 9.2.2]{nemirovski2004interior}}]\label{lemma:nemirovski}
    Let \( f(x) \) be a three times continuously differentiable real-valued
convex function on the ray \( \{x > 0\} \) such that
      \begin{equation}
        |f'''(x)| \leq 3 \beta x^{-1} f''(x), \, x > 0.
      \end{equation}
      for some parameter $\beta > 0$.
      
    Then, the function
      \begin{equation}
        F(t, x) = - \log(t - f(x)) - \max[1, \beta^2] \log x
      \end{equation}
    is a self-concordant barrier with complexity parameter
    \( \nu = 1+\max[1, \beta^2] \) for the two-dimensional convex domain
       $\{(t, x) \in \mathbb R^2 \, \mid \,
            x > 0, \, f(x) \leq t \}$.
\end{lemma}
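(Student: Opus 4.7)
The plan is to verify directly the two defining inequalities of a self-concordant barrier. Writing \( F = \Psi + \Lambda \) with \( \Psi(t,x) = -\log(t-f(x)) \), \( \Lambda(x) = -\alpha \log x \), and \( \alpha = \max[1,\beta^2] \), for every feasible \( (t,x) \) and every direction \( h = (h_t, h_x) \in \mathbb R^2 \) I need to establish
\[
|D^3 F[h,h,h]| \leq 2 \bigl( D^2 F[h,h] \bigr)^{3/2}
\qquad \text{and} \qquad
\bigl( DF[h] \bigr)^2 \leq (1 + \alpha)\, D^2 F[h,h].
\]

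The first step is explicit differentiation. Setting \( u = t - f(x) > 0 \) and \( a = h_t - f'(x) h_x \), routine calculation yields
\[
DF[h] = -\frac{a}{u} - \frac{\alpha h_x}{x},
\qquad
D^2 F[h,h] = \frac{a^2}{u^2} + \frac{f''(x) h_x^2}{u} + \frac{\alpha h_x^2}{x^2},
\]
\[
D^3 F[h,h,h] = -\frac{2 a^3}{u^3} - \frac{3 a\, f''(x) h_x^2}{u^2} + \frac{f'''(x) h_x^3}{u} - \frac{2 \alpha h_x^3}{x^3}.
\]
Introduce the three non-negative contributions \( Q_1 = a^2/u^2 \), \( Q_2 = f''(x) h_x^2 / u \) (non-negative by convexity of \( f \)), and \( Q_3 = \alpha h_x^2 / x^2 \), so that \( D^2 F[h,h] = Q_1 + Q_2 + Q_3 \).

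The main analytic step is controlling the ``bad'' summand \( f'''(x) h_x^3/u \) via the hypothesis \( |f'''(x)| \leq 3\beta\, x^{-1} f''(x) \). Since \( \alpha \geq \beta^2 \) and \( \alpha \geq 1 \),
\[
\Bigl| \frac{f'''(x) h_x^3}{u} \Bigr|
\leq 3\beta \, \frac{f''(x) h_x^2}{u} \cdot \frac{|h_x|}{x}
= \frac{3\beta}{\sqrt{\alpha}}\, Q_2 \sqrt{Q_3}
\leq 3\, Q_2 \sqrt{Q_3},
\qquad
\Bigl| \frac{2\alpha h_x^3}{x^3} \Bigr|
= \frac{2}{\sqrt{\alpha}}\, Q_3^{3/2}
\leq 2\, Q_3^{3/2}.
\]
Combined with \( |2 a^3/u^3| = 2 Q_1^{3/2} \) and \( |3 a\, f''(x) h_x^2/u^2| \leq 3 \sqrt{Q_1}\, Q_2 \), the triangle inequality produces the scalar majorisation
\[
|D^3 F[h,h,h]| \leq 2 Q_1^{3/2} + 3 \sqrt{Q_1}\, Q_2 + 3\, Q_2 \sqrt{Q_3} + 2 Q_3^{3/2}.
\]
Setting \( r = \sqrt{Q_1 + Q_2 + Q_3} \) and \( v = \sqrt{Q_1} + \sqrt{Q_3} \), a direct expansion using \( Q_2 = r^2 - Q_1 - Q_3 \) and \( 2(\sqrt{Q_1}^{\,3}+\sqrt{Q_3}^{\,3}) = 2v(Q_1 - \sqrt{Q_1 Q_3} + Q_3) \) rewrites the right-hand side as \( 3 r^2 v - v^3 \). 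The self-concordance estimate then reduces to the algebraic identity
\[
2 r^3 - 3 r^2 v + v^3 = (r - v)^2 (2r + v) \geq 0, \qquad r, v \geq 0,
\]
which at once yields \( |D^3 F[h,h,h]| \leq 2 (D^2 F[h,h])^{3/2} \).

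For the barrier inequality, a weighted Cauchy--Schwarz \( (p + q)^2 \leq (1 + \alpha)(p^2 + q^2/\alpha) \) applied to \( p = a/u \) and \( q = \alpha h_x/x \) gives
\[
\bigl( DF[h] \bigr)^2
\leq (1+\alpha) \Bigl( \frac{a^2}{u^2} + \frac{\alpha h_x^2}{x^2} \Bigr)
\leq (1+\alpha)\, D^2 F[h,h],
\]
since \( Q_2 \geq 0 \), delivering \( \nu = 1 + \alpha \). The main obstacle is identifying the sum-of-squares factorisation \( 2 r^3 - 3 r^2 v + v^3 = (r-v)^2(2r+v) \) that makes the constant \( 2 \) in the self-concordance definition tight; conceptually, the hypothesis \( |f'''| \leq 3\beta x^{-1} f'' \) expresses \( \beta \)-compatibility of \( f \) with the logarithmic barrier of the ray \( \{x > 0\} \), and the coefficient \( \alpha = \max[1,\beta^2] \) is chosen precisely so that \( \beta/\sqrt{\alpha} \leq 1 \). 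The complexity \( \nu = 1 + \alpha \) is then the sum of the parameters of the two one-dimensional barriers: the log-barrier of the half-space \( \{t > f(x)\} \) (with parameter \( 1 \)) and the log-barrier of the ray \( \{x > 0\} \) (with parameter \( \alpha \)), in accordance with the general additivity rule for self-concordant barriers.
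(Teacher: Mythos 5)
Your computation is correct, and the key algebraic step checks out: with $Q_1 = a^2/u^2$, $Q_2 = f''(x)h_x^2/u \geq 0$, $Q_3 = \alpha h_x^2/x^2$, the triangle-inequality majorant is exactly $3r^2v - v^3$ with $r^2 = Q_1+Q_2+Q_3$, $v = \sqrt{Q_1}+\sqrt{Q_3}$, and $2r^3 - (3r^2 v - v^3) = (r-v)^2(2r+v) \geq 0$ gives self-concordance with constant $2$. The weighted Cauchy--Schwarz for the barrier parameter, using $Q_2 \geq 0$ to discard the middle term, is also correct and yields $\nu = 1+\alpha$.

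Be aware, however, that the paper itself gives no proof: the lemma is imported verbatim from Nemirovski's lecture notes (cited as Proposition 9.2.2 there), so there is no paper argument to compare against. Your proof follows what is essentially the canonical verification in that reference --- split $F$ into the log-barrier of $\{t > f(x)\}$ and the $\alpha$-weighted log-barrier of $\{x>0\}$, write the Hessian quadratic form as a sum of three non-negative pieces, and use the hypothesis $|f'''| \leq 3\beta x^{-1} f''$ together with $\alpha \geq \max[1,\beta^2]$ to absorb the dangerous cross-term $f'''(x)h_x^3/u$ into $3Q_2\sqrt{Q_3}$. The only omission worth noting is that you do not explicitly check the boundary/barrier behaviour of $F$ (that $F \to +\infty$ as $(t,x)$ approaches $\partial\{x>0,\ f(x)\leq t\}$), but this is immediate from the two logarithms and is usually taken as read.
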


The two important operations for composing convex problems are addition and
composition (the composition of a convex function with an \emph{increasing}
convex function is also convex). Each composition is treated by introducing an
additional variable; for example, if \( F(x) \) is convex and increasing, and \(
G(x) \) is convex, then the epigraph of the composition \( \{ (t, x) \, \mid \,
F(G(x)) \leq t \} \) can be expressed as a projection of a three-dimensional
set
  \begin{equation}
    \{
        (x, t) \, \mid \, \exists y \, \mid \,
        F(y) \leq t, \, G(x) \leq y
    \}.
  \end{equation}
The behaviour of self-concordant barriers with respect to taking linear
combinations or with respect to combining multiple dimensions
is described by the following proposition.

\begin{lemma}[{\cite[Proposition 3.1.1]{nemirovski2004interior}}]
    \label{lemma:stability:of:barriers}
    \renewcommand{\theenumi}{(\roman{enumi}}%
    The following propositions hold:
    \begin{enumerate}
        \item Let \( F_i \) be self-concordant barriers with complexity
            parameters \( \nu_i \) for closed convex domains \( Q_i \subset
            \mathbb R^{n_i} \), \( i = 1, \ldots, m \). Then the function
              \begin{equation}
                F(x_1, \ldots, x_m) = F_1(x_1) + \cdots + F_m(x_m)
              \end{equation}
            is a \( (\sum_i \nu_i) \)-self-concordant barrier for
            \( Q = Q_1 \times \cdots \times Q_m \).
        \item Let \( F_i \) be self-concordant barriers with complexity
            parameters \( \nu_i \) for closed convex domains \( Q_i \subset
            \mathbb R^{n} \) and \( \alpha_i \geq 1 \) be reals, \( i = 1,
            \ldots, m \). Assume that \( Q = \cap_{i=1}^m Q_i \) has a non-empty
            interior. Then the function
              \begin{equation}
                F(x) = \alpha_1 F_1(x) + \cdots + \alpha_m F_m(x)
              \end{equation}
            is a \( (\sum_i \alpha_i \nu_i) \)-self-concordant barrier for
            \( Q \).
    \end{enumerate}
    \renewcommand{\theenumi}{\alph{enumi}}%
\end{lemma}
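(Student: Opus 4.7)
The plan is to work directly from the defining inequalities of a self-concordant barrier. Recall that a convex function $F$ on the interior of $Q$ is a $\nu$-self-concordant barrier if it is a barrier (tends to $+\infty$ on $\partial Q$), and along every direction $h$ satisfies the self-concordance inequality $|D^3 F(x)[h,h,h]| \leq 2 \bigl( D^2 F(x)[h,h] \bigr)^{3/2}$ together with the barrier inequality $\bigl( DF(x)[h] \bigr)^2 \leq \nu \cdot D^2 F(x)[h,h]$. The strategy for both (i) and (ii) is to verify these two inequalities for the composite function by combining the corresponding inequalities for the summands, relying only on two elementary facts: Cauchy--Schwarz, and the super-additivity estimate $\sum_i \tau_i^{3/2} \leq \bigl(\sum_i \tau_i\bigr)^{3/2}$ valid for $\tau_i \geq 0$ (which itself follows from $(a+b)^{3/2} \geq a^{3/2}+b^{3/2}$ for $a,b\geq 0$).

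For part (i), I would start by observing that the derivatives of $F(x_1,\ldots,x_m) = \sum_i F_i(x_i)$ decompose coordinate-wise: for a block vector $h=(h_1,\ldots,h_m)$ one has $D^k F[h,\ldots,h] = \sum_i D^k F_i(x_i)[h_i,\ldots,h_i]$. Setting $\tau_i := D^2 F_i(x_i)[h_i,h_i] \geq 0$, self-concordance of each $F_i$ gives $|D^3 F_i[h_i,h_i,h_i]| \leq 2 \tau_i^{3/2}$; summing and applying the super-additivity estimate yields the self-concordance of $F$. For the barrier inequality, I would set $a_i = \sqrt{\nu_i}$ and $b_i = DF_i(x_i)[h_i]/\sqrt{\nu_i}$, so that Cauchy--Schwarz gives
\begin{equation*}
\Bigl(\sum_i DF_i[h_i]\Bigr)^2 \leq \Bigl(\sum_i \nu_i\Bigr) \sum_i \frac{(DF_i[h_i])^2}{\nu_i} \leq \Bigl(\sum_i \nu_i\Bigr) \cdot D^2 F[h,h],
\end{equation*}
which is exactly the barrier inequality with $\nu = \sum_i \nu_i$. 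The barrier property along $\partial Q$ is immediate from the fact that $Q = Q_1 \times \cdots \times Q_m$ and each $F_i$ blows up on $\partial Q_i$.

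For part (ii), I would first check the preliminary fact that for $\alpha \geq 1$ the rescaled function $\alpha F_i$ is itself an $(\alpha \nu_i)$-self-concordant barrier on $Q_i$: in the self-concordance inequality one gets $|D^3(\alpha F_i)[h,h,h]| = \alpha |D^3 F_i[h,h,h]|$ while $(D^2(\alpha F_i)[h,h])^{3/2} = \alpha^{3/2} (D^2 F_i[h,h])^{3/2}$, so a factor $1/\sqrt{\alpha}\leq 1$ is absorbed on the right; similarly, the barrier inequality gains a factor $\alpha$, giving $\alpha \nu_i$. Once this rescaling lemma is in hand, the hypothesis $\alpha_i \geq 1$ reduces the claim to showing that a sum of self-concordant barriers acting on the \emph{same} variable $x$ inside $Q = \bigcap_i Q_i$ is self-concordant with parameter equal to the sum of the individual parameters. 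The derivative decomposition $D^k F(x) = \sum_i \alpha_i D^k F_i(x)$ still holds, and exactly the same two inequalities of part (i) apply verbatim, after substituting $\tau_i := \alpha_i D^2 F_i(x)[h,h]$ and $\sqrt{\alpha_i \nu_i}$ as Cauchy--Schwarz weights. The non-empty interior hypothesis ensures that $F = \sum_i \alpha_i F_i$ is defined on an honest convex body.

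The main obstacle, such as it is, is picking the correct weighting inside the Cauchy--Schwarz step: a naive bound like $\bigl(\sum a_i\bigr)^2 \leq m \sum a_i^2$ would only give $\nu = m \cdot \max_i \nu_i$ instead of the sharp additive $\sum_i \nu_i$; the weighting by $\sqrt{\nu_i}$ (respectively $\sqrt{\alpha_i \nu_i}$) is what makes the barrier parameter additive. Likewise, the hypothesis $\alpha_i \geq 1$ is essential for preserving the self-concordance constant $2$ in the rescaling step of part (ii); dropping it would inflate the cubic constant and break self-concordance for the sum. Both remaining steps are routine once the two elementary inequalities are invoked in the right order.
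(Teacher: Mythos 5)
Note first that the paper does not prove this lemma; it is imported directly from Proposition 3.1.1 of Nemirovski's interior-point lecture notes, so there is no in-paper argument to compare against. Your proof is nevertheless correct and is essentially the standard textbook argument: decompose the derivatives (blockwise in part (i), linearly in part (ii)), control the cubic form via the superadditivity $\sum_i \tau_i^{3/2} \leq \bigl(\sum_i \tau_i\bigr)^{3/2}$, and control the linear form via the $\sqrt{\nu_i}$- (respectively $\sqrt{\alpha_i\nu_i}$-) weighted Cauchy--Schwarz inequality. Your commentary on why the naive unweighted Cauchy--Schwarz would only yield $m\max_i \nu_i$ rather than $\sum_i\nu_i$, and on why $\alpha_i \geq 1$ is exactly what lets the factor $\alpha_i/\alpha_i^{3/2} \leq 1$ absorb the rescaling in the cubic inequality, pins down correctly where the hypotheses enter. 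The only slight overstatement is that part (i)'s inequalities carry over to part (ii) ``verbatim'': in (i) the directions $h_i$ are free block variables while in (ii) every summand is evaluated along the same $h$, so the algebra is analogous rather than identical --- but the substitution you indicate ($\tau_i := \alpha_i\, D^2 F_i(x)[h,h]$, together with the weights $\sqrt{\alpha_i\nu_i}$) does carry it through, and the nonempty-interior hypothesis is what guarantees the sum is defined and divergent on $\partial\bigl(\bigcap_i Q_i\bigr)$.
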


Self-concordant barrier design is one of the central issues in modern convex
optimisation --- knowing these barriers allows one to reduce any convex program
to a standard form~\cite[Section 4.2.6]{nesterov1998introductory}. The reduction
proceeds as follows.
  \begin{equation}
    \begin{cases}
        f_0(\vec z) \to \min \\
        f_i(\vec z) \leq 0 \text{ for } i = 1, \ldots, m
    \end{cases}
    \quad \Leftrightarrow \quad
    \begin{cases}
        \tau \to \min \\
        f_0(\vec z) \leq \tau \\
        f_i(\vec z) \leq \kappa \text{ for } i = 1, \ldots, m \\
        \tau \leq C, \, \kappa \leq 0.
    \end{cases}
  \end{equation}
Here, \( C \) is a technical constant, bounding the value of the target function
from above. Suppose that \( F_0(\vec z, \tau) \), \( F_1(\vec z, \kappa),
\ldots, F_m(\vec z, \kappa) \) are the \( \nu_0 \)-self-concordant and \( \nu_i
\)-self-concordant barrier functions for the epigraphs of the functions \(
f_0(\vec z), f_1(\vec z),\ldots,f_m(\vec z) \), respectively. Then,
according to~\cref{lemma:nemirovski,lemma:stability:of:barriers}, the resulting
self-concordant barrier
  \begin{equation}
    \widehat F(\vec z, \tau, \kappa)
    =
    F_0(\vec z, \tau)
    + \sum_{i = 1}^m F_j(\vec z, \kappa) - \log(C - \tau) - \log(-\kappa)
  \end{equation}
has a complexity parameter
  \begin{equation}
    \widehat \nu = \nu_0 + \sum_{i = 1}^m \nu_i + 2.
  \end{equation}
As specified in~\cite[Section 2]{nesterov1994interior}, each Newton step is a step of the form
\[
    x_{i+1} = x_i - \mu_i \cdot (\widehat F''(x_i))^{-1} \widehat F'(x_i),
\]
where \( \mu_i \) is a dampening parameter defined by the procedure, and \( x_i \in
\mathbb R^m \), where \( m \) is the dimension of the parameter space. The
computation of the dampening parameter, as well as the Newton step, involves
computing the inverse of the matrix of second derivatives. It
costs \( O(m^2 L) \) arithmetic operations to compose this matrix, which may
potentially vary if some exotic constructions are introduced, and
\( O(m^3) \) steps to compute its numerical inverse.

Finally, let us note that there is room for a variety of additional fine-tuned
optimisation techniques. For instance, the Newton iterations can use faster than
$O(n^3)$ matrix inversion algorithms, use $O(n^\omega)$ matrix multiplication
algorithms, or even use algorithms dedicated to sparse matrices.

\begin{Remark}
    The {\sf DCP} software, that we use in our prototype tuner implementation
uses a completely different principle, and expresses the problem in a standard
form using so-called \emph{exponential cones}. The approach with barriers
gives here more theoretical guarantees as it allows to cover a broader range
of constructions, but was not chosen for prototyping reasons.
\end{Remark}

\subsection{Barriers for context-free specifications}
In this section, we give rigorous complexity bounds for the case of context-free
grammars, see~\cref{section:cf:grammars}. Each of the constraints in the
auxiliary convex optimisation problem that we construct takes form
  \begin{equation}
    x_i \geq \log\left(
    \sum_{j} \exp(\vec a_{ij}^\top \vec x)
    \right) \quad \text{for } i = 1, \ldots, m
  \end{equation}
where \( \vec x \) is the vector of unknowns, and \( \vec a_{ij} \) are vectors of
coefficients. Such a convex constraint can be rewritten into an equivalent form
  \begin{equation}
    e^{x_i} \geq
    \sum_{j} \exp(\vec a_{ij}^\top \vec x)
    \quad \Leftrightarrow \quad
    \sum_{j} \exp( (\vec a_{ij} - \vec e_i)^\top \vec x) \leq 0
  \end{equation}
where \( \vec e_i \) is \( i \)th basis vector. Each of the functions \( \exp(
(\vec a_{ij} - \vec e_i)^\top \vec x) \) is convex, as 
\( (\vec a_{ij} - \vec e_i)^\top \vec x \) is linear and \( e^x \) is both
increasing and convex. Clearly, their sum is also convex. Using the
sum-substitution technique from~\cref{lemma:stability:of:barriers}, we conclude
that in order to provide a fast convex optimisation interior-point procedure, it
is sufficient to construct a self-concordant barrier for the epigraph of \(
e^x\). Note that \( \epi e^x = \{ (t, x) \in \mathbb R^2 \, \mid \, e^x
\leq t \}\) can be equivalently rewritten as
\( \{
    (t, x) \in \mathbb R^2 \, \mid \,
    t > 0, \, - \log t \leq -x
\}\). Furthermore, a linear variable change \( x \mapsto -x \), and variable swapping
\( (x, t) \mapsto (t, x) \) converts it into an epigraph of the convex function
\( f(x) = - \log x \):
  \begin{equation}
    \epi [-\log(x)] =
    \{
    (t, x) \in \mathbb R^2 \, \mid \,
    x > 0, \, - \log x \leq t
    \}.
  \end{equation}
Finally, we note that the condition from~\cref{lemma:nemirovski} is fulfilled
with \( \beta = \frac{2}{3}\). Therefore, a logarithmic barrier
  \begin{equation}
    F(t, x) = -\log(t + \log(x)) - \log x
  \end{equation}
is a \( 2 \)-self-concordant barrier for an epigraph of $-\log(x)$. This also
gives a \( 2 \)-self-concordant barrier for an epigraph of $e^x$. Consequently,
each unambiguous context-free specification can be converted into a standard
form for an interior-point method with total barrier complexity parameter \(
O(L) \) where \( L \) is the total number of terms in the specification.

\subsection{Cycle and positive set constructions}
\label{section:barriers:cycle:set}
Among the basic labelled operators, the \( \Cycle \) operator cannot be easily
fitted into the more restricted {\sf DCP} framework that we use in our prototype
implementation. The same issue concerns all of the restricted versions of
labelled operators including \( \Set_{>0} \), \( \Set_{>k} \), and \(
\Cycle_{>k} \). We provide further, more heuristic analysis of the logarithmic
barriers for these operators and hint as to why they can be efficiently
supported. We do not provide practical implementations of these modified
versions, however, in principle, they can be handled as well.

Let us start with two particular functions obtained as a log-exp transform of
the \( \Cycle \) and \( \Set_{>0} \) constructors,
see~\cref{section:implementation:details}. These two functions are
  \begin{equation}
    L(x) = \log \log\dfrac{1}{1 - e^x} \quad \text{and} \quad
    E(x) = \log(e^{e^x} - 1).
  \end{equation}
Note that the domain of \( L(x) \) is equal to \( \{x \in \mathbb R \, \mid \, x
< 0 \} \). The convexity of each of \( E(x) \) and \( L(x) \) is ensured by
noticing that they are log-sum-exp expressions with an infinite number of
positive summands:
  \begin{equation}
    L(x) = \log \sum_{n = 1}^\infty \dfrac{e^{kx}}{k} \quad \text{and} \quad
    E(x) = \log \sum_{n = 1}^\infty \dfrac{e^{kx}}{k!}.
  \end{equation}
Unfortunately, we cannot afford an infinite number of summands, as adding each
summand contributes to the final barrier complexity. Instead, we are going to
explicitly construct self-concordant barriers for the epigraphs of \( L(x) \) and
\( E(x) \) with the help of~\cref{lemma:nemirovski}.

\textbf{The case of \( f(x) = L(x) \)}.
Direct numerical evaluation shows that when \( 0 < x \lesssim 3 \beta \) it
holds $|f'''(x)| \leq \frac{3 \beta}{x} f''(x)$. Relative errors of
$\widehat{x}$ satisfying $|f'''(\widehat{x})| = \frac{3 \beta}{\widehat{x}}
f''(\widehat{x})$ for varying values of $\beta$ is listed in~\cref{fig:cycle:barrier}.
\begin{table}[ht!]
  \begin{tabular}{@{}c|c|c|c|c|c@{}}
    $\beta$ &  6 &  9 &  12 & 15 & 30 \\ \midrule 
    $|\widehat x - 3 \beta|$ & $10^{-1}$ & $10^{-2}$  & $10^{-3}$ & $10^{-4}$  & $10^{-11}$  \\
  \end{tabular}
  \caption{$|f'''(\widehat{x})| = \frac{3 \beta}{\widehat{x}} f''(\widehat{x})$
  for different $\beta$ and corresponding $\widehat{x}$.}
  \label{fig:cycle:barrier}
\end{table}

Let us notice the practical consequences of this fact. For most typical
combinatorial systems, tuning the expected size to $n$ gives the following
behaviour of the tuning parameter:
\begin{itemize}
    \item \( x_n = \rho \left(1 - O(n^{-1})\right) \) for algebraic and rational
      systems, and
    \item \( x_n = O(Poly(n)) \) for some specifications with singularity \(
      \rho \) at infinity, such as entire functions (e.g. labelled sets or
      similar constructs).
\end{itemize}
In fact, we are not aware of any natural example combinatorial system where
the tuning parameter approaches the singular value at an exponential speed \(
x_n = \rho(1 - e^{-n}) \).

Therefore if we choose, say, \( \beta = 10 \), then according
to~\cref{lemma:nemirovski}, the resulting logarithmic barrier will be
\(\beta^2\)-self-concordant for \( x \lesssim 30 \). In practical terms, \(
\beta = 10 \) covers expected values up to \( N \sim e^{30} \approx 10^{13} \).
More generally, for \( N \to \infty \), the complexity of the whole optimisation
scheme is proportional to a square root of the sum of all the barrier
complexities, which yields an additional \( \log N \) factor for the resulting
barrier complexity, and for the complexity of the final tuning algorithm.

\textbf{The case of \( f(x) = E(x) \)}.
If we try to apply~\cref{lemma:nemirovski} directly to \( E(x) \), we discover that
the conditions of the lemma cannot be satisfied. Therefore, we pre-process the
epigraph of \( E(x) \) by taking its inverse, similarly as we did for the
2-self-concordant barrier corresponding the exponent function \( e^x \).
Specifically, we transform \( \epi E \) into 
  \begin{equation}
    \epi E = \{
        (x, t) \in \mathbb R^2 \, \mid \,
        E(x) \leq t
    \}
    =
    \{
        (x, t) \in \mathbb R^2 \, \mid \,
        t > 0, \,
        x \leq \log \log (1 + e^t)
    \}.
  \end{equation}
It can be proven that \( f(x) = - \log \log(1 + e^x) \) is convex for \( x > 0
\). Moreover, we can numerically verify that the condition
of~\cref{lemma:nemirovski} is satisfied for all \( x \leq 10^{100} \). We
conjecture that it is valid for all \( x \in \mathbb R \), but, as we discussed
above, in practice we only need it to be valid for relatively small values of \(
x \).

\subsubsection{Restricted versions of cycles and sets.}
It is also possible to consider cycles and sets containing more than \( k \)
elements, so that we need to construct self-concordant barriers for the
functions
  \begin{equation}
    L_{> k}(x) = \log\left(
        \log\dfrac{1}{1 - e^x} - \sum_{n = 1}^k \dfrac{e^{kx}}{k}
    \right) \quad \text{and} \quad
    E_{> k}(x) = \log\left(
        e^{e^x} - \sum_{n = 0}^k \dfrac{e^{kx}}{k!}
    \right).
  \end{equation}
For the cycle construction, we again empirically discover that for \( x \lesssim
3 \beta\) the logarithmic barrier is self-concordant. We have numerically tested
this assertion for \( k \in \{ 0, 1, 2, 3\} \) and \( \beta \leq 10 \).

For the restricted construction \( \Set_{>k} \), we proceed in the same way as
we did for \( \Set_{>0} \) by constructing the epigraph of the negative inverse
function. In this case, however, the inverse function cannot be expressed
explicitly, but can only be computed numerically. All the numerical computations
can be done efficiently using the binary search for the inverse function, and
implicit derivative theorem for its derivatives. Again, this gives an
algorithmic definition of a self-concordant barrier for restricted sets without
the necessity to operate with infinite sums.

\subsubsection{Unlabelled restricted cycles and multi-sets.}
Note that the unlabelled version of the \( \Cycle \) operator expressed by
  \begin{equation}
    \Cycle(\mathcal A)(\vec z)
    =
    \sum_{k \geq 1} \dfrac{\varphi(k)}{k} \log \dfrac{1}{1 - A(\vec z^k)}
  \end{equation}
is in fact a weighted sum of already familiar labelled operators, for which we
have already provided efficient self-concordant barriers. Again, restricted
variations of the corresponding operators \( \MSet_{>0} \), \( \MSet_{>k} \) and
\( \Cycle_{>k} \) cannot be efficiently treated by {\sf DCP}, but the usage of
self-concordant barriers can provide some insight. For example, \( \MSet_{>0} \)
defined as
\begin{equation}
    \MSet\nolimits_{>0}(\mathcal A)(z) = \exp\Big(
        \sum_{k \geq 0} \dfrac{A(z^k)}{k}
    \Big) - 1,
\end{equation}
can be represented as a composition with \( e^x - 1 \), whose log-exp transform
is \( E(x) = \log(e^{e^x}-1) \).

Next, an unlabelled cycle construction with a lower bound on the number of
components \( \Cycle_{\geq m} \) can be expressed as
  \begin{equation}
    \Cycle\nolimits_{\geq m}(\mathcal A)(\vec z)
    =
    \sum_{k \geq 1} \dfrac{\varphi(k)}{k}
    \log_{\geq \lceil m/k \rceil} \dfrac{1}{1 - A(\vec z^k)}
    \quad \text{where} \quad
    \log_{\geq m} \dfrac{1}{1 - x}
    = \sum_{k \geq m} \dfrac{x^k}{k}.
  \end{equation}
Again, as in the case of unlabelled compositions, the sequence of values of the
generating functions \( \seq{A(\vec z^k)}_{k=1}^\infty \) tends to \( A(0) = 0\)
at the speed of a geometric progression. Note that this gives an efficient
composition scheme for unlabelled cycles as well, using
\begin{equation}
 L_{> k}(x)
= \log\Big(
    \log \frac{1}{1 - e^x} - \sum_{n = 1}^k \frac{e^{kx}}{k}
\Big).
\end{equation}
Finally, let us sketch how to express restricted unlabelled sets using the
self-concordant barriers for restricted labelled sets. We start by considering
two examples, \( \MSet_{\geq 2} \) and \( \MSet_{\geq 3} \).

For sets with at least two elements we have
  \begin{equation}
    \MSet\nolimits_{\geq 2}(\mathcal A)(\vec z) =
    \MSet(\mathcal A)(\vec z) - 1 - \mathcal A(z)
    =
    \exp\left(
        \sum_{k \geq 1} \dfrac{A(\vec z^k)}{k}
    \right) - 1 - A(\vec z).
  \end{equation}
If we set \( h_1 = A(\vec z) \) and \( h_2 = \sum_{k \geq 2} \frac{A(\vec
z^k)}{k} \), then the resulting value can be rewritten as
  \begin{equation}
    \exp(h_1 + h_2) - 1 - h_1 =
    \exp(h_1 + h_2) - 1 - (h_1 + h_2) + h_2.
  \end{equation}
The required composition scheme is obtained by composing with the log-exp
transform of the function \( e^x - x - 1\) which is \( E_{\geq 2}(x) :=
E_{>1}(x) = \log(e^{e^x} - e^x - 1) \).

For sets with at least three elements it holds
  \begin{equation}
    \MSet\nolimits_{\geq 3}(\mathcal A)(\vec z)
    =
    \exp\left(
        \sum_{k \geq 1} \dfrac{A(\vec z^k)}{k}
    \right) - 1 - A(\vec z) - \dfrac{A(\vec z)^2}{2}
    - \dfrac{A(\vec z^2)}{2}.
  \end{equation}
Again, by introducing auxiliary variables \( h_1 = A(\vec z) \), \( h_2 =
\frac{A(\vec z^2)}{2} \), \( h_3 = \sum_{k \geq 3} \frac{A(\vec z^k)}{k} \), and \(
H = h_1 + h_2 + h_3 \), we can rewrite the operator as
  \begin{equation}
    \exp(h_1 + h_2 + h_3) - 1 - h_1 - \frac{h_1^2}{2} - h_2
    =
    \exp(H) - 1 - H - \frac{H^2}{2} + h_3 + R
  \end{equation}
where \( R \) is a positive quantity obtained by subtracting \( \frac{h_1^2}{2} \)
from \( \frac{(h_1 + h_2 + h_3)^2}{2} \). Again, we obtain a composition scheme by
composing with \( E_{\geq 3}(x) := E_{>2}(x) \).

Finally, for sets with general restrictions the above scheme can be repeated for
multi-sets with forbidden cardinalities by subtracting \( \MSet\nolimits_{=m} \)
obtained by
  \begin{equation}
    \MSet\nolimits_{=m}(\mathcal A)(\vec z)
    =
    [u^m] \exp\left(
        \sum_{k \geq 1}
        \dfrac{A(\vec z^k)u^k}{k}
    \right).
  \end{equation}
Afterwards, the result can be represented by adding missing parts to \( \MSet_{>
m} \). We do not provide a detailed analysis of this case, as for large \( m \)
this procedure becomes less and less efficient due to the potential
super-polynomial increase in \( m \) of the number of the required terms.

\section{Optimal biased expectation and tuning precision}
\label{section:biased:expectation}

In the current section we address the choice of the optimal expected value for
the Boltzmann sampler with prescribed anticipated rejection domain. We also
investigate the precision of the control vector $\boldsymbol{x}$ required to
guarantee a linear time, approximate-size anticipated rejection sampling scheme.

In what follows, we analyse the rejection cost independently for each parameter.
That is to say, we evaluate the cost of the anticipated rejection until the
value of the
investigated parameter \( N \) falls into its prescribed tolerance window
\( [n_1, n_2] \). The sum of these costs taken over all the parameters
is an upper bound for costs of a global rejection scheme
(which, of course, could be better because of some overlapping rejections).

We distinguish two types of behaviors. The first concerns parameters admitting a
singular behavior of type ${(1-\frac{z}{\rho})}^{-\alpha}$ which corresponds,
for instance, to the size parameter of \emph{peak} and \emph{flat} distributions
described in \cite{BodGenRo2015}. In this case, we will prove that for each
$\alpha$ the required precision is of order $O\left(\frac{1}{N}\right)$ where $N$
is the approximate size of the expected outcome.

The second case concerns so-called \emph{bumpy} distributions, appearing in
numerous situations, such as parameter analysis in strongly connected algebraic
or rational grammars. Since the distribution is concentrated around its mean
value, the needed precision is related to the standard deviation and is,
usually, of order $O(\frac{1}{\sqrt{N}})$. In contrast, the size of the target
window is not linear, as for the non-concentrated case, but instead is of order
$\sqrt{N}$, or of a different order \( n^\gamma \).

In the first case, we show that the standard target tuning expectation $\mathbb{E}(N) =
n := \frac{n_1 + n_2}{2}$ involving the admissible window $[n_1,n_2]$ does
\emph{not} minimise the overall cost of rejections, if anticipated rejection is
taken into account.  Instead, $\mathbb{E}(N) = \beta n$, for some suitable
parameter $\beta(n_1,n_2)$ should be used, cf~\cref{fig:optimal:bias}.  In what
follows, we describe how to find the optimal value of  \( \beta \)
and how it improves the rejection cost, compared to the usual $n := \frac{n_1
+ n_2}{2}$ parameter.

\begin{figure}[hbt!]
\centering
\begin{tikzpicture}[>=stealth',thick, node distance=1.0cm,inner sep=0pt]
  \draw[very thick] (-3,0) -- (7,0);
  \draw
      node at (-2,0) {[}
      node at (-2,-0.5) {\( n_1 \)}
      ;
  \draw
      node at (4,0) {]}
      node at (4,-0.5) {\( n_2 \)}
      ;
  \draw
      node at (1,0) {|}
      node at (0.9,1.4) {\scalebox{0.9}{\( \mathbb E_x(N) = n \)}}
      node at (1,-0.45) {default}
      node (f) at (1,1.2) {}
      node (a) at (1,0.2) {}
      ;
  \fromto{f}{a}

  \draw
      node at (3,0) {|}
      node (g) at (3,1.2) {}
      node (b) at (3,0.2) {}
      node at (2.9,-0.45) {biased}
      node at (3.2,1.4) {\scalebox{0.9}{\( \mathbb E_x(N) = \beta n \)}}
      ;
  \fromto{g}{b}

  \draw
      node (e) at (7.2,1.2) {}
      node (c) at (7.2,0.2) {}
      node at (7.2,-0.45) {singular}
      node at (7.2,1.4) {\( \mathbb E_x(N) = \infty \)}
  ;
  \fromto{e}{c}
\end{tikzpicture}
\caption{\label{fig:optimal:bias}Optimal biased expectation}
\end{figure}
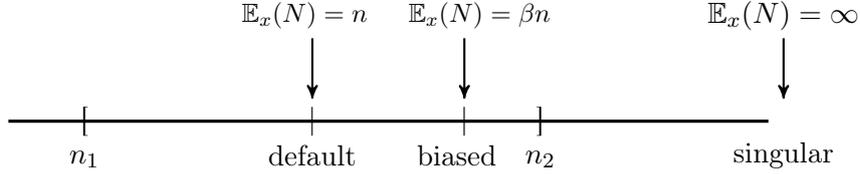

\subsection{Non-concentrated case}
For simplicity, we assume that the analysis concerns the size parameter $z$.
Let $C_{\vec u}(z)$ be the generating
function of a class $\mathcal{C}$ for which the parameters $\vec u$ are fixed
(i.e.~introduce some additional weights). Assume that $C_{\vec u}(z)$ is
$\Delta$\nobreakdash-analytic and there exist analytic functions $\alpha(z)$,
$\beta(z)$, and a corresponding singularity $\rho$ (all depending on $\vec u$)
such that as \( z \to \rho \) it holds
\begin{equation}\label{eq:algebraic:asymptotic:expansion}
    C(z) \sim a(z) - b(z) {\left(1 - \dfrac{z}{\rho}\right)}^{-\alpha}.
\end{equation}

Let $C^{< n_1}(z)$, $C^{> n_2}(z)$ and $C^{[n_1, n_2]}(z)$ denote the generating
functions corresponding to the subclasses of objects of size strictly smaller
than $n_1$, strictly greater than $n_2$, and in between (inclusively) $n_1$ and
$n_2$, respectively. Let $T_n$ stand for the cumulative size of the generated
and rejected objects produced by a sampler calibrated with parameter $x$, and an
admissible size window $[(1-\varepsilon)n, (1+\varepsilon)n]$.  From
\cite[Theorem 7.3]{DuFlLoSc}, the corresponding probability generating function
\begin{equation}
    F(u,x) = \sum_{k} \mathbb{P} (T_n = k) u^k
\end{equation}
satisfies
\begin{equation}
    F(u,x) = \left(1 - \frac{1}{C^{}(x)}(C^{<(1-\varepsilon)n)}(ux) +
    C^{>(1+\varepsilon)n)}(x) u^{\lceil(1+\varepsilon)n\rceil})
    \right)^{-1} \frac{C^{[(1-\varepsilon)n,(1+\varepsilon)n]}(x)}{C^{}(x)}
\end{equation}
and so
\begin{equation}\label{eq:precision:generated:expectation}
    \mathbb{E}(T_n) = \frac{x \frac{d}{d x}C^{<(1-\varepsilon)n}(x)
        + \lceil (1+\varepsilon)n \rceil
    C^{>(1+\varepsilon)n}(x)}{C^{[(1-\varepsilon)n,(1+\varepsilon)n]}(x)}.
\end{equation}

\begin{Remark}
Notice that even if the studied parameter does not denote the object size, the
    expectation $\mathbb{E}(T_n)$ stays essentially similar
    to~\eqref{eq:precision:generated:expectation}. Involved generating functions
    correspond then to the subclasses of objects whose \emph{size contribution
    of the studied parameter} is smaller, greater, or in between the respective
    window parameters. It ensues that the following analysis for the size
    parameter can be readily translated to others parameters.
\end{Remark}

\begin{proposition}
Let us consider an approximate-size, anticipated rejection Boltzmann sampler
$\Gamma(\mathcal{C})$ for a class $\mathcal{C}$ of the above kind. Assume that
$\Gamma(\mathcal{C})$ is calibrated using $x_n = \rho
\left(1-\frac{\delta}{n}\right)$ for some parameter $\delta$. Let $T_n$ denote
the cumulative size of objects rejected by $\Gamma(\mathcal{C})$. Then, as $n$
tends to infinity
\begin{align}\label{eq:precision:kappa}
    \begin{split}
    \mathbb{E}(T_n) &\sim n \kappa_c(\varepsilon,\alpha,\delta)
    \quad \text{where}\\
    \kappa_c(\varepsilon,\alpha,\delta)
        &=\frac{\displaystyle\int\limits_{w=0}^{1-\varepsilon}w^{\alpha}
        e^{-\delta w} \dd{w} +  \displaystyle\int\limits_{w=1+\varepsilon}^{\infty}
        (1+\varepsilon) w^{\alpha-1} e^{-\delta w} \dd{w} }
        {\displaystyle\int\limits_{w=1-\varepsilon}^{1+\varepsilon} w^{\alpha-1}
        e^{-\delta w} \dd{w}}.
    \end{split}
\end{align}
\end{proposition}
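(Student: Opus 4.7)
The plan is to derive all three pieces of~\eqref{eq:precision:generated:expectation}, namely $x\,\tfrac{d}{dx}C^{<(1-\varepsilon)n}(x)$, $C^{>(1+\varepsilon)n}(x)$ and $C^{[(1-\varepsilon)n,(1+\varepsilon)n]}(x)$ at $x = x_n = \rho(1-\delta/n)$, as Riemann-sum approximations to the integrals appearing in~\eqref{eq:precision:kappa}. The first step is to apply singularity analysis (transfer theorem, see~\cite[Chapter~VI]{flajolet09}) to the $\Delta$-analytic expansion~\eqref{eq:algebraic:asymptotic:expansion}, yielding
\begin{equation*}
    c_k := [z^k] C(z) \;=\; \frac{b(\rho)}{\Gamma(\alpha)}\, k^{\alpha-1}\rho^{-k}\bigl(1+O(k^{-1})\bigr).
\end{equation*}

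The second step is a uniform substitution $x = x_n$. For indices $k$ with $k/n$ lying in any bounded interval, one has $x_n^k = \rho^k(1-\delta/n)^k = \rho^k e^{-\delta k/n}(1+O(1/n))$, so that
\begin{equation*}
    c_k x_n^k \;=\; \frac{b(\rho)}{\Gamma(\alpha)}\, k^{\alpha-1} e^{-\delta k/n}\bigl(1+o(1)\bigr)
\end{equation*}
uniformly. Writing $k = wn$ and $\Delta w = 1/n$, the three partial sums become Riemann sums for
\begin{equation*}
    \frac{b(\rho)}{\Gamma(\alpha)}\,n^{\alpha}\int w^{\alpha-1} e^{-\delta w}\dd{w}
    \quad\text{and}\quad
    \frac{b(\rho)}{\Gamma(\alpha)}\,n^{\alpha+1}\int w^{\alpha} e^{-\delta w}\dd{w},
\end{equation*}
integrated over $[0,1-\varepsilon]$, $[1-\varepsilon,1+\varepsilon]$ and $[1+\varepsilon,\infty)$ as needed. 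Plugging these three asymptotic equivalents into~\eqref{eq:precision:generated:expectation}, the prefactor $b(\rho)/\Gamma(\alpha)$ and the factor $n^{\alpha}$ cancel between numerator and denominator, leaving exactly $n\cdot\kappa_c(\varepsilon,\alpha,\delta)$ (the factor $\lceil(1+\varepsilon)n\rceil$ absorbs into the $(1+\varepsilon)$ multiplier in the second tail integral).

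The main obstacle is the tail term $C^{>(1+\varepsilon)n}(x_n)$, whose Riemann-sum interpretation is an \emph{improper} integral over $[1+\varepsilon,\infty)$. Here one cannot appeal only to the local transfer theorem, since the estimate $c_k \sim \tfrac{b(\rho)}{\Gamma(\alpha)} k^{\alpha-1}\rho^{-k}$ must be used for $k$ that are not comparable to $n$. The plan is to split at some threshold $k = Kn$ with $K$ large: on $(1+\varepsilon)n \leq k \leq Kn$ the uniform transfer-theorem estimate applies and yields the truncated integral $\int_{1+\varepsilon}^{K} w^{\alpha-1} e^{-\delta w}\dd{w}$; on $k > Kn$ one uses that $x_n < \rho$ and the crude bound $c_k = O(k^{\alpha-1}\rho^{-k})$ to show that $\sum_{k>Kn} c_k x_n^k = O(n^{\alpha}\int_{K}^{\infty} w^{\alpha-1} e^{-\delta w}\dd{w})$, which vanishes as $K\to\infty$ uniformly in $n$ since $e^{-\delta w}$ decays exponentially. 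The same dominated-convergence style bound justifies interchanging limits and obtaining the improper integral. The denominator's lower bound $\int_{1-\varepsilon}^{1+\varepsilon} w^{\alpha-1}e^{-\delta w}\dd{w} > 0$ keeps the ratio well-defined, which completes the derivation.
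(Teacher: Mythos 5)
Your argument is correct and follows essentially the same route the paper takes: the paper simply points to \cite[Theorem~3.2]{BodGenRo2015} and says the proof is ``analogous\ldots using Euler--Maclaurin summation,'' with the sole change being that $\delta$ is decoupled from $\alpha$. Converting the three partial sums $\sum k c_k x_n^k$, $\sum c_k x_n^k$ (over and under the window) into integrals by substituting the transfer-theorem asymptotics $c_k \sim \text{const}\cdot k^{\alpha-1}\rho^{-k}$ and $x_n^k\sim\rho^k e^{-\delta k/n}$, then passing from the Riemann sums to $\int w^{\alpha}e^{-\delta w}\dd{w}$ and $\int w^{\alpha-1}e^{-\delta w}\dd{w}$ as $n\to\infty$, is precisely the Euler--Maclaurin argument meant; the common prefactor and the $n^\alpha$ scaling cancel, leaving $n\kappa_c(\varepsilon,\alpha,\delta)$. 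Your handling of the tail $C^{>(1+\varepsilon)n}(x_n)$ by cutting at $k=Kn$, applying the uniform singularity-analysis estimate on $[(1+\varepsilon)n,Kn]$, and then bounding $\sum_{k>Kn}$ by $\int_K^\infty w^{\alpha-1}e^{-\delta w}\dd w\to 0$ is exactly the dominated-convergence-style justification needed, and is a detail the paper leaves implicit. One tiny inconsistency: since the singular expansion is written $C(z)\sim a(z)-b(z)(1-z/\rho)^{-\alpha}$, the transfer theorem gives $c_k \sim -\tfrac{b(\rho)}{\Gamma(\alpha)}k^{\alpha-1}\rho^{-k}$, so the sign in your coefficient asymptotics is off (or implicitly absorbed into the sign convention for $b$); this is immaterial here because the constant cancels in the ratio.
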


\begin{proof}
This theorem is very similar to~\cite[Theorem 3.2]{BodGenRo2015}.
It has an analogous proof using Euler--Maclaurin summation, with the difference that
we do not anymore suppose that \( \alpha = \delta \).
Let us emphasise again that keeping these two parameters distinct is the key to obtaining faster samplers with lower anticipated rejection cost.
\end{proof}

Note that $\kappa_c(\varepsilon,\alpha,\delta)$ is independent of $n$. Moreover,
$\kappa_c(\varepsilon,\alpha,\delta)$ is bounded for each $\delta > 0$.
Consequently, $\Gamma(\mathcal{C})$ calibrated with parameter $x_n$ of precision
of order $O\left(\frac{1}{n}\right)$ has expected linear time complexity.

\begin{Remark}
The value of $\kappa_c(\varepsilon,\alpha,\delta)$ is also independent of the
    weight $\vec u$, depending only on the type of the singularity $\alpha$ and
    the prescribed tolerance $\varepsilon$. Note that it provides strong
    stability guarantees regarding the sampler's performance under the change of
    involved parameters; most notably, in the vicinity of the values of $\vec u$
    where the type of the singularity changes discontinuously,
    cf.~\cite{BodPonty,banderier2012diversity}. Even in this particular cases,
    our result guarantees the linearity of the anticipated rejection sampling scheme.
\end{Remark}

With an explicit formula~\eqref{eq:precision:kappa} for
$\kappa_c(\varepsilon,\alpha,\delta)$ it is possible to optimise its value for
specific values of $\epsilon$ and $\alpha$. In other words, improve the
multiplicative constant in the expected number $T_n$.~\Cref{tab:precision:values} provides some exemplary values for
$\delta_{\text{min}}$ and the corresponding
$\kappa_c(\varepsilon,\alpha,\delta_{\text{min}})$.
\begin{table}[ht!]
\begin{center}

\begin{tabular}{|l||c|c|c|c|}
\hline
 & $\varepsilon=0.2$ & $\varepsilon=0.1$ & $\varepsilon=0.05$ & $\varepsilon=0.01$ \\
\hline
\hline
$-\alpha=-2$ & (2.77, 2.37) & (2.66, 5.58) & (2.61, 11.95)& (2.58, 62.78) \\
\hline
    $-\alpha=-3/2$ & (2.25, 2.66) & (2.16, 6.17) & (2.12, 13.12)& (2.09, 68.58) \\
\hline
$-\alpha=-1$ & (1.74, 3.06) & (1.66, 6.97) & (1.62, 14.72)& (1.60, 76.51) \\
\hline
    $-\alpha=-1/2$ &(1.23, 3.66) & (1.16, 8.18) & (1.13, 17.18) & (1.11, 88.38)\\
\hline
    $-\alpha=1/2$ & (0.25, 7.37) & (0.22, 15.68) & (0.21, 32.10)& (0.20, 162.91) \\
\hline
\end{tabular}
\end{center}
    \caption{Pairs of values $(\delta_{\text{min}},
    \kappa_c(\varepsilon,\alpha,\delta_{\text{min}}))$ for respective parameters
    $\varepsilon$ and $\alpha$. Numerical values are rounded up to the second
    decimal point.}\label{tab:precision:values}
\end{table}

\begin{Remark}
    The careful reader might be surprised by the fact that $\delta_{\text{min}}$
    is not equal to $\alpha$, as it is suggested in the seminal paper
    \cite{DuFlLoSc}. The reason behind this is the fact that anticipated
    rejection creates a small bias in the distribution, initially not taken into
    account. For instance, for $\alpha=1$ and tolerance $\varepsilon=0.1$, the
    best choice for $\beta$ is not $1$, but $1.6572067$. Notably, this
    decreases the rejection complexity from $8.05n$ to $6.97n$.

    In consequence of the introduced bias, it is no longer optimal to find $x$ by
    solving $\mathbb{E}_x(N) = n$. Instead, we have to make a small correction
    accounting for the effect of anticipated rejection. Luckily, this is easy to
    provide. Recall that, asymptotically, $\mathbb{E}_x(N) = n$ is attained for
    $x_n=\rho(1-\frac{\alpha}{n})$. After computing the expectation, it follows
    that in order to obtain $x_n = \rho(1-\frac{\delta_{\text{min}}}{n})$ we
    have to solve the corrected $\mathbb{E}_x(N) =
    \frac{\alpha}{\delta_{\text{min}}} n$, instead.
\end{Remark}

\subsection{Concentrated cases}
In following part we consider cases where the investigated size parameter tends
asymptotically to a Gaussian law, provided that the target expectation is fixed
and large.  Let \( C(z) \) be, as usual, the generating function corresponding
to the class \( \mathcal C \), and \( N \) be the random variable representing
the size of an object generated according to the associated Boltzmann
distribution.  As previously, we want to evaluate the rejection cost and find an
appropriate bias parameter which minimises its value.

Before we begin, let us consider a few examples of concentrated distributions
analysed using the following generalised quasi-powers theorem.
\begin{lemma}[Generalised quasi-powers, {\cite[Theorem IX.13]{flajolet09}}]\label{lemma:generalised:qp}
    Assume that, for \( u \) in a fixed complex neighbourhood \( \Omega \) of \(
    1 \), the probability generating functions \( p_n(u) \) of non-negative
    discrete random variables \( X_n \) admit representations of the form
    \begin{equation}
        p_n(u) = \exp(h_n(u)) \left(1 + o(1)\right),
    \end{equation}
    uniformly with respect to \( u \), where each \( h_n(u) \) is analytic in
    \( \Omega \). Assume also the conditions
    \begin{equation}
        h_n'(1) + h_n''(1) \to \infty
        \quad \text{and} \quad
        \dfrac
        {h'''(u)}
        {( h_n'(1) + h_n''(1) )^{3/2}} \to 0
    \end{equation}
    as $n \to \infty$, uniformly for $u \in \Omega$.

    Then, the random variable
      \begin{equation}
        X_n^\star
        =
        \dfrac{X_n - \mathbb E X_n}
        { \sqrt{\operatorname{Var} X_n} }
        =
        \dfrac{X_n - h'_n(1)}
        {(h_n'(1) + h_n''(1))^{1/2}}
      \end{equation}
    converges in distribution to \( \mathcal N(0,1) \).
\end{lemma}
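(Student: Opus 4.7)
The plan is to apply Lévy's continuity theorem: I will show that the characteristic function of $X_n^\star$ converges pointwise to $e^{-t^2/2}$. First, I extract the moment data from the hypothesis. Because $p_n(u)$ is a probability generating function, $p_n(1) = 1$, which together with the representation $p_n(u) = \exp(h_n(u))(1+o(1))$ at $u = 1$ forces $h_n(1) = 0$. Differentiating and evaluating at $u=1$ then yields $\mathbb{E}\, X_n = p_n'(1) = h_n'(1)$ and $\operatorname{Var}\, X_n = p_n''(1) + p_n'(1) - p_n'(1)^2 = h_n''(1) + h_n'(1) =: \sigma_n^2$, which matches the normalisation in the statement.

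Next, for fixed $t \in \mathbb{R}$ I set $u_n := e^{it/\sigma_n}$. By the first convergence hypothesis $\sigma_n \to \infty$, so $u_n \in \Omega$ for all large $n$. The characteristic function of $X_n^\star$ reads
\[
\phi_n(t) = e^{-it\, h_n'(1)/\sigma_n}\, p_n(u_n) = e^{-it\, h_n'(1)/\sigma_n}\exp\bigl(h_n(u_n)\bigr)\bigl(1 + o(1)\bigr).
\]
I Taylor-expand $h_n$ at $u=1$ to second order,
\[
h_n(u_n) = h_n'(1)(u_n - 1) + \tfrac{1}{2}\, h_n''(1)(u_n - 1)^2 + R_n,
\]
and use $u_n - 1 = it/\sigma_n - t^2/(2\sigma_n^2) + O(\sigma_n^{-3})$ together with $(u_n-1)^2 = -t^2/\sigma_n^2 + O(\sigma_n^{-3})$. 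The terms of order $1/\sigma_n$ combine to $it\, h_n'(1)/\sigma_n$ and cancel the prefactor $e^{-it\, h_n'(1)/\sigma_n}$; the terms of order $1/\sigma_n^2$ combine to $-t^2 (h_n'(1) + h_n''(1))/(2\sigma_n^2) = -t^2/2$. The Taylor remainder is controlled by
\[
|R_n| \leq \tfrac{1}{6} \sup_{\xi \in \Omega} |h_n'''(\xi)| \cdot |u_n - 1|^3 = O\!\left( |t|^3 \cdot \frac{\sup_{\Omega}|h_n'''|}{\sigma_n^3} \right),
\]
which vanishes by the second hypothesis. Combining the three sources of error yields $\phi_n(t) \to e^{-t^2/2}$ for every $t$, and Lévy's theorem concludes.

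The main (and essentially only) obstacle is keeping the three error sources compatible: the multiplicative $(1+o(1))$ from the original $p_n$-approximation, the truncation in the expansion of $u_n$ about $1$, and the Taylor remainder $R_n$. All three must be controlled uniformly over the shrinking disk $|u-1| = O(1/\sigma_n) \subseteq \Omega$, which is why the hypothesis asks for analyticity of $h_n$ in a \emph{fixed} neighbourhood $\Omega$ of $1$ and for uniform smallness of $h_n'''$ there; once this uniformity is in place, the expansion above is standard and the cancellations are routine.
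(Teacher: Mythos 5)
The paper does not prove this lemma --- it is imported verbatim as Theorem IX.13 of Flajolet and Sedgewick and only cited, so there is no internal proof to compare against. Your strategy (Lévy's continuity theorem plus a Taylor expansion of the log-characteristic function on the shrinking disc $|u-1|=O(1/\sigma_n)$) is the standard route behind that citation.

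There are two issues in the write-up. The first is a minor inaccuracy: from $p_n(1)=1$ and $p_n(u)=\exp(h_n(u))(1+o(1))$ you can deduce only $h_n(1)=o(1)$, not $h_n(1)=0$, and similarly $\mathbb{E}X_n=h_n'(1)$, $\operatorname{Var}X_n=h_n'(1)+h_n''(1)$ do not follow exactly by ``differentiating'' an asymptotic equivalence; they hold asymptotically, and what one actually proves is convergence of $(X_n-h_n'(1))/\sqrt{h_n'(1)+h_n''(1)}$. This looseness is inherited from the cited statement and does not harm the skeleton of the argument (the $o(1)$ from $h_n(1)$ is simply absorbed into the final error). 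The second issue is more substantive. You correctly identify three error sources, but the ``truncation in the expansion of $u_n$ about $1$'' is precisely the one that requires work and you do not carry it out. Substituting $u_n-1 = it/\sigma_n - t^2/(2\sigma_n^2)+O(\sigma_n^{-3})$ and $(u_n-1)^2=-t^2/\sigma_n^2+O(\sigma_n^{-3})$ into the order-two Taylor polynomial leaves residuals of size $h_n'(1)\,O(\sigma_n^{-3})$ and $h_n''(1)\,O(\sigma_n^{-3})$ in the exponent. The stated hypotheses bound $h_n'''(u)/\sigma_n^3$ and the \emph{sum} $h_n'(1)+h_n''(1)=\sigma_n^2$, but not $h_n'(1)$ or $h_n''(1)$ individually; since $h_n''(1)\approx\operatorname{Var}X_n-\mathbb{E}X_n$ may be strongly negative, $h_n'(1)$ can in principle dwarf $\sigma_n^3$, in which case these residuals would not vanish. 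Declaring the cancellations ``routine'' therefore glosses over the one place where the proof genuinely has to engage with the interplay between the third-derivative hypothesis and the fact that $p_n$ is a probability generating function (or else reorganise around $g_n(s)=h_n(e^s)$ with an integral-form remainder so the error is measured by a single quantity the hypotheses really do control). As it stands, this step is a gap.
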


In what follows, we apply the generalised quasi-powers theorem to a series of
probability generating functions capturing the outcome size
distributions of example Boltzmann samplers.
Therefore, the discrete random variables of interest \( X_n \) denote
these outcome sizes, where \( n \) stands for the target expected size,
while \( n \to \infty \).
We use the standard formulas for
the mean and variance corresponding to the size $N$ of objects sampled according
to the Boltzmann distribution parametrised with $x$:
\begin{equation}
    \mathbb E_{x} N = \mu(x)=\dfrac{xC'(x)}{C(x)}
    \qquad
    \mathrm{Var}_x N = {\sigma(x)}^2 =x \mu(x)'.
\end{equation}
For convenience, we also use $x_n$ to denote the solution of the tuning equation
$\mu(x_n) = n$.

\subsubsection{Bicoloured sets}
Consider the combinatorial class \( \mathcal C \) consisting of finite sets with
atoms of two different types (colours). The corresponding generating function
satisfies \( C(x) = e^{2x} \).
Accordingly, the mean value $\mu(x)$ and standard deviation $\sigma(x)$ satisfy
\begin{equation}
    \mu(x) = 2x \quad \text{and} \quad \sigma(x) = \sqrt{2 x},
\end{equation}
whereas \( x_n = n/2 \).

The probability generating function $p_n(u)$ capturing the size
distribution calibrated with parameter $x_n$ satisfies therefore
\begin{equation}
    p_n(u) = \frac{C(x_n u)}{C(x_n)} = e^{n (u - 1)}.
\end{equation}
A direct calculation reveals that all the conditions
of~\cref{lemma:generalised:qp} hold, and the size distribution tends to
a Gaussian law as $x_n \to \infty$. In particular, we have
\begin{equation}
    \mathbb{E}_{x_n} N \sim n \quad \text{and} \quad \operatorname{Var}_{x_n} N
    \sim n.
\end{equation}

\subsubsection{Involutions}
Consider the combinatorial class of involutions, \ie
$n$-element permutations \( \pi \) satisfying \( \pi \circ \pi = {\sf id} \).
The respective generating function satisfies $C(x)=\exp(x+\frac{x^2}{2})$.
Consequently, the mean value $\mu(x)$,
variance ${\sigma(x)}^2$, and tuning parameter $x_n$ satisfy
\begin{equation}
    \mu(x) = x(1 + x),
    \quad {\sigma(x)}^2 = x (1 + 2 x),
    \quad \text{and} \quad x_n=\dfrac{\sqrt{1+4n}-1}{2}.
\end{equation}
And so
\begin{equation}
    p_n(u) = \exp \left(-\frac{1}{4} (u-1) \left(\left(\sqrt{1+4 n}-1\right) (u-1)-2 n
   (u+1)\right)\right).
\end{equation}
Again, the premises of~\cref{lemma:generalised:qp} can be easily verified. In
the end, we obtain a limit Gaussian distribution where
\begin{equation}
    \mathbb{E}_{x_n} N \sim n \quad \text{and} \quad \operatorname{Var}_{x_n} N
    \sim \frac{1}{2} \left(1 + 4 n-\sqrt{1+4 n}\right).
\end{equation}

\subsubsection{Set partitions}
Consider the combinatorial class of set partitions for which we have
\( C(x) = e^{e^x-1} \).
A direct computation provides the identities
\begin{equation}
    \mu(x) = x e^x, \quad
    {\sigma(x)}^2 = x e^x (1 + x), \quad
    \text{and} \quad x_n = W(n)
\end{equation}
where \( W(n) \) is the Lambert function defined as the positive solution of \(
W(n) e^{W(n)} = n \).

In this case, $p_n(u) = \exp(h_n(u))\left(1 + o(1)\right)$ where $h_n(u) = e^{u
W(n)}-e^{W(n)}$. We can easily check that
\begin{equation}
    h'_n(1) = n, \quad h''_n(1) = e^{W(n)} {W(n)}^2, \quad \text{and} \quad
    h'''_n(u) = W(n)^3 e^{u W(n)}.
\end{equation}
Since $W(n) = \ln n - \ln \ln n + o(1)$ we note that $h'_n(1) + h''_n(1) \to
\infty$.  Moreover
\begin{equation}
    \frac{h'''_n(u)}{{\left(h'_n(1) + h''_n(1)\right)}^{3/2}} = \frac{W(n)^3
    e^{u W(n)}}{(n (W(n)+1))^{3/2}} \sim
    {(\ln n)}^{3/2} n^{u - 3/2}
\end{equation}
which for $u$ fixed near one tends to $0$ as $n \to \infty$.

Hence, by~\cref{lemma:generalised:qp} the distribution tends, again, to a Gaussian law.
In the limit we obtain
\begin{equation}
    \mathbb{E}_{x_n}(N) \sim n \quad \text{and} \quad
    \operatorname{Var}_{x_n}(N) \sim n \left( 1 + W(n)\right).
\end{equation}

\subsubsection{Fragmented permutations}
Next, consider the class of fragmented permutations, \ie sets of non-empty
labelled sequences, see~\cite[Example VIII.7, p. 562]{flajolet09}.  The
corresponding generating function satisfies
\(
    C(x) = \exp \left(
        \frac{x}{1 - x}
    \right)
\).
Consequently, we find that
\begin{equation}
\mu(x) = \frac{x}{{\left(1-x\right)}^2},
    \quad {\sigma(x)}^2 = x \left(\frac{2 x}{(1-x)^3}+\frac{1}{(1-x)^2}\right) \quad
    \text{and} \quad
    x_n = \frac{1 + 2 n-\sqrt{1+4 n}}{2 n}.
\end{equation}

Note that as $n \to \infty$, the tuning parameter $x_n \to 1$.  Again, we verify
that, as $x_n \to \infty$, the function \( h_n(u) = \frac{u x_n}{1-u
x_n}-\frac{x_n}{1-x_n} \) satisfies both
  \begin{equation}
    h'(x_n) + h''(x_n) \to \infty \quad \text{and} \quad
    \dfrac
    { h'''(x_n u) }
    { ( h_n'(x_n) + h_n''(x_n) )^{3/2} } \to 0.
  \end{equation}
However, now $h_n(u)$ is not analytic at $u = 1$.  We cannot therefore
apply~\cref{lemma:generalised:qp}. Nonetheless, it is still possible to prove
that the limiting distribution is Gaussian, using the explicit formula for the
number of fragmented permutations from~\cite{flajolet09}.

\bigskip
From the above examples we see that there is a variety of different behaviours
regarding the variance of the limit Gaussian distribution. In what follows we
will show how the mean value is connected to the anticipated rejection
cost~\eqref{eq:precision:generated:expectation} in the Gaussian case.

\begin{theorem}[Tuning precision for concentrated distributions]
Let the size parameter asymptotically follow a Gaussian law with
expectation $\mu(x)$ and standard deviation $\sigma(x)$, and let \( X(n) \) be
the inverse function of \( \mu(x) \), i.e. \( \mu(X(n)) = n \).
Denote by $x_{n,\delta}$ the biased tuning value
  \begin{equation}
    x_{n, \delta} := x_n+\delta\sigma(x_n)\dfrac{d X(n)}{dn}.
  \end{equation}
Then the anticipated rejection cost \( T_n \) with the target tolerance window
$[n-\varepsilon\sigma(x_n),n+\varepsilon\sigma(x_n)]$ satisfies, when $n$ tends
to infinity:
  \begin{equation}
    \mathbb{E}_{x_{n,\delta}}(T_n) \sim n \kappa(\varepsilon,\delta),
    \quad
    \kappa(\varepsilon,\delta) =
    \dfrac{
        1 + \Phi(-\varepsilon - \delta) - \Phi(\varepsilon - \delta)
    }{
        \Phi(\varepsilon - \delta)
        - \Phi(-\varepsilon - \delta)
    }
  \end{equation}
where
\(
\Phi(x) := \dfrac{1}{\sqrt{2 \pi}} \displaystyle\int_{-\infty}^x
e^{-w^2/2} dw
\)
is the Gaussian distribution function. The minimal cost is achieved when
\( \delta = \delta_{\min} = 0 \).
\end{theorem}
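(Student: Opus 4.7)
The plan is to start from the exact formula \eqref{eq:precision:generated:expectation} for the anticipated rejection cost, instantiated now with the tolerance window $n_1 = n - \varepsilon\sigma(x_n)$, $n_2 = n + \varepsilon\sigma(x_n)$ and tuning parameter $x = x_{n,\delta}$. Dividing the numerator and denominator by $C(x_{n,\delta})$ rewrites the cost as
\[
    \mathbb{E}_{x_{n,\delta}}(T_n) = \frac{\mathbb{E}_{x_{n,\delta}}[N\,\mathbf{1}_{N < n_1}] + n_2\, \mathbb{P}_{x_{n,\delta}}(N > n_2)}{\mathbb{P}_{x_{n,\delta}}(n_1 \leq N \leq n_2)},
\]
where $N$ is the single-trial Boltzmann-generated size at parameter $x_{n,\delta}$. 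The task therefore reduces to evaluating these three quantities under the assumed Gaussian limit.

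First, I would verify that the biased tuning $x_{n,\delta}$ shifts the mean of $N$ by exactly $\delta\sigma(x_n)$ while leaving the standard deviation asymptotically unchanged. Using $X'(n) = 1/\mu'(x_n)$ and Taylor-expanding $\mu$ at $x_n$ gives $\mu(x_{n,\delta}) = n + \delta\sigma(x_n) + o(\sigma(x_n))$; a similar expansion of $\sigma^2(x) = x\mu'(x)$ yields $\sigma(x_{n,\delta}) \sim \sigma(x_n)$. Setting $Z := (N - \mu(x_{n,\delta}))/\sigma(x_{n,\delta})$, the Gaussian hypothesis delivers $Z \to \mathcal{N}(0,1)$. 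Translating the window conditions, $\{N < n_1\}$ becomes $\{Z < -\varepsilon - \delta + o(1)\}$ and $\{N > n_2\}$ becomes $\{Z > \varepsilon - \delta + o(1)\}$, so the denominator converges to $\Phi(\varepsilon - \delta) - \Phi(-\varepsilon - \delta)$. For the numerator I would decompose $N = \mu(x_{n,\delta}) + \sigma(x_{n,\delta}) Z$, so that
\[
    \mathbb{E}[N\,\mathbf{1}_{N<n_1}] = (n + \delta\sigma)\,\mathbb{P}(Z < -\varepsilon-\delta) + \sigma\,\mathbb{E}[Z\,\mathbf{1}_{Z<-\varepsilon-\delta}] \sim n\,\Phi(-\varepsilon-\delta),
\]
and analogously $n_2\,\mathbb{P}(N>n_2) \sim n\,(1 - \Phi(\varepsilon-\delta))$; the $\sigma$-corrections are negligible relative to $n$. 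Dividing by the window probability produces the announced $\mathbb{E}(T_n) \sim n\,\kappa(\varepsilon,\delta)$.

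For the optimality statement, a direct differentiation with Gaussian density $\phi$ shows that both the numerator and the denominator of $\kappa(\varepsilon,\delta)$ have vanishing $\delta$-derivative at $\delta=0$ (each reduces to $\pm(\phi(\varepsilon-\delta) - \phi(\varepsilon+\delta))$ evaluated at zero), so $\delta=0$ is critical. Combined with the symmetry $\kappa(\varepsilon,\delta) = \kappa(\varepsilon,-\delta)$ obtained from $\Phi(-u) = 1 - \Phi(u)$, this identifies $\delta=0$ as the minimiser. The main obstacle lies in step two: the generalised quasi-powers theorem delivers only weak convergence of $Z$, whereas the tail expectation $\mathbb{E}[N\,\mathbf{1}_{N<n_1}]$ requires convergence of a moment functional against an unbounded random variable. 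The cleanest fix is to invoke either a local limit theorem, or uniform integrability of $N/n$ under $\mathbb{P}_{x_{n,\delta}}$, both of which are typically available in the algebraic-singular and exp-type regimes exemplified in \cref{section:biased:expectation}.
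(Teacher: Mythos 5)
Your approach is essentially the paper's: you use the same three-term decomposition of the rejection cost, the same Taylor expansion showing $\mu(x_{n,\delta}) = n + \delta\sigma(x_n) + o(\sigma(x_n))$ with $\sigma(x_{n,\delta}) \sim \sigma(x_n)$, and the same passage to Gaussian tail probabilities, arriving at the same $\kappa(\varepsilon,\delta)$. The only cosmetic difference is that you phrase everything in terms of the Boltzmann random variable $N$ and its probability mass function, whereas the paper works directly with generating-function coefficient sums and converts them to Gaussian integrals via an Euler--Maclaurin step. These are the same computation; the paper's Euler--Maclaurin step amounts to silently using a local-limit-type approximation for $[z^n]C(z)x^n/C(x)$, so the gap you correctly flag at the end (weak convergence from the quasi-powers theorem does not by itself control $\mathbb{E}[N\mathbf{1}_{N<n_1}]$) is in fact also present in the paper's proof, and your suggested remedies (local limit theorem, or uniform integrability of $N/n$) are exactly what is needed to make either version fully rigorous.

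One small flaw in your optimality argument: showing that $\delta=0$ is a critical point and that $\kappa(\varepsilon,\cdot)$ is even does not by itself identify $\delta=0$ as a \emph{minimiser} (an even function can just as well have a local maximum at $0$). The paper closes this cleanly by observing that the numerator of $\kappa$ equals one minus its denominator, so $\kappa(\varepsilon,\delta) = 1/D(\delta) - 1$ with $D(\delta) := \Phi(\varepsilon-\delta) - \Phi(-\varepsilon-\delta)$ the window probability; minimising $\kappa$ is then equivalent to maximising $D$, whose derivative $\phi(-\varepsilon-\delta) - \phi(\varepsilon-\delta)$ has the single root $\delta=0$, which is a global maximum since $D\to 0$ as $|\delta|\to\infty$. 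Either add this monotone-transform argument or check the sign of $\kappa''(\varepsilon,0)$; evenness alone is not enough.
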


\begin{proof}
Using the Euler--MacLaurin estimate similarly to~\cite{BodGenRo2015}, we obtain the
following estimates for \( C^{[n_1, n_2]}(x), C^{>n_2}(x) \) and
\( \frac{d}{dx} C^{<n_1}(x) \), when \( x = x_n \) as \( n \to \infty \):
\begin{align}
  \begin{split}
    \dfrac{C^{[n_1, n_2]}(x)}{C(x)} &= \sum_{n = n_1}^{n_2} \dfrac{[z^n]C(z)
      x^n}{C(x)} \sim \dfrac{1}{\sqrt{2 \pi} \sigma(x)} \int_{n_1}^{n_2} e^{
      -\frac{1}{2} \left( \frac{n - \mu(x)}{\sigma(x)} \right)^2 } dn
    \\
    & = \Phi\left( \dfrac{n_2 - \mu(x)}{\sigma(x)} \right) - \Phi\left(
      \dfrac{n_1 - \mu(x)}{\sigma(x)} \right)
    , \\
    \dfrac{C^{> n_2}(x)}{C(x)} & \sim 1 - \Phi\left( \dfrac{n_2 -
        \mu(x)}{\sigma(x)} \right)
    , \text{ and} \\
    \dfrac{x \frac{d}{dx} C^{< n_1}(x)}{C(x)} & = \sum_{n = 0}^{n_1} n
    \dfrac{[z^n]C(z) x^n}{C(x)} \sim \dfrac{1}{\sqrt{2 \pi} \sigma(x)}
    \int_0^{n_1} \left[ (n - \mu(x)) + \mu(x) \right] e^{ -\frac{1}{2} \left(
        \frac{n - \mu(x)}{\sigma(x)} \right)^2 } dn
    \\
    & \sim \dfrac{2 \sigma(x)}{\sqrt{2 \pi}} \left[ 1 - e^{ -\frac{1}{2} \left(
          \frac{n_1 - \mu(x)}{\sigma(x)} \right)^2 } \right] + \mu(x) \Phi
    \left( \dfrac{n_1 - \mu(x)}{\sigma(x)} \right).
  \end{split}
\end{align}
Since \( \mu(x) \gg \sigma(x) \) for \( x = x_n \) as \( n \to \infty \),
the second summand dominates the first summand in the last expression.

Combining these quantities, and by substituting into
\(
    \mathbb{E}(T_n)
    = \frac
    {x C'^{<n_1}(x)+ n_2 C^{>n_2}(x)}
    { C^{[n_1,n_2]}(x)}
\),
we get the estimated cost of anticipated rejection:
  \begin{equation}
    \mathbb E_x (T_n)
    =
    \dfrac
    {
        \mu(x) \Phi
        \left(
            \dfrac{n_1 - \mu(x)}{\sigma(x)}
        \right)
        +
        n_2 \left[
            1
            -
            \Phi\left(
                \dfrac{n_2 - \mu(x)}{\sigma(x)}
            \right)
        \right]
    }
    {
        \Phi\left(
            \dfrac{n_2 - \mu(x)}{\sigma(x)}
        \right)
        -
        \Phi\left(
            \dfrac{n_1 - \mu(x)}{\sigma(x)}
        \right)
    }
    .
  \end{equation}
Let us choose \( x = x_{n, \delta} \) in such a way that
\(
    \mu(x_{n, \delta}) = n + \delta \sigma(x_n)
\).
Using the first two terms of the Taylor expansion of \( \mu(x_{n,\delta}) \)
around \( x_n \), we can show that this value is asymptotically
\(
    x_{n, \delta} \sim x_n+\delta\sigma(x_n)\frac{d X(n)}{dn}
\).
This finally allows to obtain \( \kappa(\varepsilon, \delta)
:= \lim_{n \to \infty} \dfrac{\mathbb E T_n}{n}
\):
  \begin{equation}
    \kappa(\varepsilon, \delta) =
    \dfrac{
        1 + \Phi(-\varepsilon - \delta) - \Phi(\varepsilon - \delta)
    }{
        \Phi(\varepsilon - \delta)
        - \Phi(-\varepsilon - \delta)
    }
    .
  \end{equation}
This expression achieves its minimum when \( \Phi(\varepsilon - \delta) -
\Phi(-\varepsilon - \delta) \) achieves its maximum. Its derivative has only
one root \( \delta = 0 \) which corresponds to the global maximum. Therefore,
\( \delta_{\min} = 0 \).
\end{proof}

\begin{Remark}
    In the case when the standard deviation \( \sigma(x_n) \) is negligible
    compared to the mean value \( \mu(x_n) \) as \( n \to \infty \), the
    rejection cost can be shown to be asymptotically equal
      \begin{equation}
        \mathbb E_{x_{n,\delta}} T_n \sim n \kappa(\varepsilon, \delta)
        = n \dfrac{ \int_{(-\infty, n_1) \cup (n_2, +\infty)} p(x) dx}
        { \int_{(n_1, n_2)} p(x) dx},
      \end{equation}
    where \( p(x) \) is the probability density function of the limiting
    distribution. In the case when the limiting distribution is not symmetric,
    and the probability density function is known, the corresponding bias can be
    computed as \( \delta_{\min} = \arg\min \kappa(\varepsilon, \delta) \).
\end{Remark}

\section{Paganini: a multi-parametric tuner prototype}\label{sec:paganini}

To illustrate the effectiveness of our tuning procedure, we developed {\sf
Paganini\footnote{see \url{https://github.com/maciej-bendkowski/paganini}
and
\url{http://paganini.readthedocs.io}.}} ---
a lightweight Python library implementing the
\emph{tuning-as-convex-optimisation} idea. Our software relies on {\sf cvxpy}, a
Python-embedded modelling language for Disciplined Convex Programming ({\sf
DCP})~\cite{grant2006disciplined}. With its help, {\sf Paganini} is able to
automatically compose, and solve adequate optimisation problems so to compute
the parameter vector corresponding to the user-defined expectations.

\subsection{Implementation details}
\label{section:implementation:details}
Due to the imposed restrictions of Disciplined Convex Programming, {\sf
Paganini} supports a strict, though substantial subset of admissible
constructions. In both the labelled and unlabelled case, {\sf Paganini} provides
the basic empty class $\varepsilon$, disjoint sum $+$, and Cartesian product
$\times$ operations. More involved constructions are briefly discussed below.

\subsubsection{Sequence operator}
Consider $\Seq(\mathcal{A})$ for some class $\mathcal{A}$ (either labelled or
unlabelled). By definition, ${\Seq{(\mathcal{A})} = \varepsilon + \mathcal{A}
\times \Seq{(\mathcal{A})}}$ and so we can treat $\Seq{(\mathcal{A})}$ as a new,
auxiliary variable with a corresponding definition of the above shape. The
log-exp transform of $\Seq{(\mathcal{A})}$ takes then the form of an elementary
{\sf DCP} log-sum-exp function:
\begin{equation}
    \Seq{(\mathcal{A})}(\boldsymbol{z}) = \dfrac{1}{1 -
    \mathcal{A}(\boldsymbol{z})} \qquad \xrightarrow{\text{log-exp}}
    \qquad \sigma \geq \log \left(1 + e^{\alpha +
    \sigma}\right)
\end{equation}
where $e^{\alpha} = A(\boldsymbol{z})$ and $e^{\sigma} =
\Seq{(\mathcal{A})}(\boldsymbol{z})$.

Likewise, since
\begin{align}
    \begin{split}
    {\Seq}_{= k}{(\mathcal{A})}(\boldsymbol{z}) = {A(\boldsymbol{z})}^k,
        &\qquad{\Seq}_{\geq k}{(\mathcal{A})}(\boldsymbol{z}) =
        \dfrac{{A(\boldsymbol{z})}^k}{1 - A(\boldsymbol{z})},\quad\text{and}\\
    {\Seq}_{\leq
    k}{(\mathcal{A})}(\boldsymbol{z}) &= \sum_{i = 0}^{k} {A(\boldsymbol{z})}^k
    \end{split}
\end{align}
it is readily possible to translate $\Seq$ with its restricted variants into
valid {\sf DCP} constraints.

\subsubsection{Pólya structures}
Consider $\MSet{(\mathcal{A})}$ and its log-exp variant:
\begin{equation}
    \MSet{(\mathcal{A})}(\boldsymbol{z}) = \exp \left(\sum_{k \geq 1}
    \dfrac{A(\boldsymbol{z}^k)}{k}\right) \qquad \xrightarrow{\text{log-exp}}
    \qquad \mu \geq \sum_{k \geq 1} \dfrac{e^{\alpha_k}}{k}
\end{equation}
where $e^{\alpha_k} = {A(\boldsymbol{z}^k)}$ and $e^{\mu} =
\MSet{(\mathcal{A})}(\boldsymbol{z})$.

The right-hand side of the $\MSet$ log-exp transform is an infinite sum of exponents
with positive weights. For practical purposes, we can therefore truncate the
series at a finite threshold and notice that it conforms with {\sf DCP} rules.
What remains is to construct constraints for the respective \emph{diagonals}
$A(\boldsymbol{z}^k)$ based on the definition of $A(\boldsymbol{z})$.

It is also possible to handle the restricted $\MSet_{= k}$.
Following~\cite[Section 2.4]{flajolet2007boltzmann} we notice that
${\MSet}_{= k}{(\mathcal{A})}(\boldsymbol{z})$ can be expressed as
\begin{equation}\label{eq:paganini:mset:k}
    {\MSet}_{= k}{(\mathcal{A})}(\boldsymbol{z}) = \sum_{P \in \mathcal{P}_k} \prod_{i = 1}^k
    \dfrac{A(\boldsymbol{z}^i)}{i^{n_i} n_i!}
\end{equation}
where $\mathcal{P}_k$ consists of so-called \emph{partition sequences} of size $k$,
i.e.~sequences $\seq{n_i}$ of natural numbers satisfying the condition $\sum_{i
= 1}^k i n_i = k$. In this form~\eqref{eq:paganini:mset:k} unfolds to a
polynomial in $A(\boldsymbol{z}), A(\boldsymbol{z}^2),\ldots,
A(\boldsymbol{z}^k)$ with positive coefficients. Consequently, ${\MSet}_{= k}$
can be converted to a {\sf DCP} constraint just like a regular algebraic equation.
Following the same idea, we can handle ${\MSet}_{\leq k}$ as
\begin{equation}
    {\MSet}_{\leq k}{(\mathcal{A})}(\boldsymbol{z}) = 1 + \sum_{i = 1}^k {\MSet}_{=
    i}{(\mathcal{A})}(\boldsymbol{z}).
\end{equation}

The ${\MSet}_{\geq k}$ variant is much more involved. Since the difference of
convex functions is not necessarily a convex function itself, we cannot directly
translate the defining
\begin{equation}
    {\MSet}_{\geq k}(\mathcal{A}) = {\MSet}(\mathcal{A}) -
{\MSet}_{< k}(\mathcal{A})
\end{equation}
into a valid {\sf DCP} constraint. Let us recall,
however, that for $k = 1$
in the case when \( \mathcal A = z_1 + z_2 + \cdots + z_d \)
it is possible to rewrite ${\MSet}_{\geq k}$ so to
avoid subtraction altogether and compose a corresponding {\sf DCP} constraint
(cf.~\Cref{subsection:weighted:partitions} and more generally,~\Cref{section:symmetric:polynomials}).

For general $k$, a more heuristic approach using \emph{Disciplined
Convex-Concave Programming} ({\sf DCCP}) might be preferred, see~\cite{dccp}.
Consider the following \emph{exp} transform:
\begin{equation}
    {\MSet}_{\geq 1}(\mathcal{A})(\boldsymbol{z}) \qquad \xrightarrow{\text{exp}} \qquad
    e^{\mu} + 1 \geq \exp \left( \sum_{k \geq 1} \dfrac{e^{\alpha_k}}{k} \right)
\end{equation}
where $e^{\alpha_k} = {A(\boldsymbol{z}^k)}$ and $e^{\mu} =
{\MSet{(\mathcal{A})}}_{\geq 1}(\boldsymbol{z})$.

Here we have two convex expressions on both sides of the inequality. Although
such constraints do not conform with {\sf DCP} rules, they are allowed in the
{\sf DCCP} framework, and can be therefore included in the problem statement.

Now, let us focus the cycle construction $\Cycle{(\mathcal{A})}$. Note that
\begin{equation}\label{eq:paganini:cycle}
    \Cycle{(\mathcal{A})}(\boldsymbol{z}) = \sum_{k \geq 1} \dfrac{\varphi(k)}{k} \log
    \dfrac{1}{1- A(\boldsymbol{z}^k)}\qquad \xrightarrow{\text{log-exp}} \qquad
        \gamma \geq \log \left( \sum_{k \geq 1} \dfrac{\varphi(k)}{k} \log
        \dfrac{1}{1 - e^{\alpha_k}}  \right)
\end{equation}
where $e^{\alpha_k} = {A(\boldsymbol{z}^k)}$, $e^{\gamma} =
\Cycle{(\mathcal{A})}(\boldsymbol{z})$, and $\varphi(k)$ is the Euler
totient function.

Unfortunately, such a constraint does not meet the requirements of {\sf DCP},
even though its right-hand side is convex. On the other hand, the restricted ${\Cycle}_{=
k}{(\mathcal{A})}$ satisfies
\begin{equation}
    {\Cycle}_{= k}{(\mathcal{A})}{(\boldsymbol{z})} = \dfrac{1}{k} \sum_{i \mid k}
    \varphi(i) {A(\boldsymbol{z}^i)}^{\frac{k}{i}}
\end{equation}
and so is it possible to express its log-exp transform as a standard {\sf DCP}
log-sum-exp function.  In order to emulate the unrestricted
${\Cycle}{(\mathcal{A})}$ operator, one can either use {\sf DCCP},
reformulating~\eqref{eq:paganini:cycle} as a {\sf DCCP} constraint, or use the
relation
\begin{equation}
    {\Cycle}{(\mathcal{A})}{(\boldsymbol{z})} = \sum_{k \geq 0} {\Cycle}_{= k}{(\mathcal{A})}{(\boldsymbol{z})}
\end{equation}
with a (heuristically chosen) truncation threshold. See further comments about
the cycle construction and the function \( \log \log \frac{1}{1 - e^x} \)
in~\cref{section:barriers:cycle:set}.

Finally, consider the power set construction $\PSet{(\mathcal{A})}$:
\begin{equation}\label{eq:paganini:pset}
    \PSet{(\mathcal{A})}(\boldsymbol{z}) = \exp \left(\sum_{k \geq 1}
    \dfrac{{(-1)}^{k-1}}{k} A(\boldsymbol{z}^k) \right) \qquad \xrightarrow{\text{log-exp}}
    \qquad \pi \geq \sum_{k \geq 1} \dfrac{{(-1)}^{k-1} e^{\alpha_k}}{k}
\end{equation}
where $e^{\alpha_k} = {A(\boldsymbol{z}^k)}$ and $e^{\pi} =
\PSet{(\mathcal{A})}(\boldsymbol{z})$.

Due to the alternating summation and subtraction in~\eqref{eq:paganini:pset},
the right-hand side of the constraint cannot be expressed as an elementary,
convex, {\sf DCP} expression. Consequently, it is not supported in our prototype
implementation.

\subsubsection{Labelled constructions}
Consider the labelled set operator $\Set{(\mathcal{A})}$. Recall that for both
the restricted and unrestricted variants we have
\begin{align}
    \begin{split}
        \Set{(\mathcal{A})}(\boldsymbol{z}) = e^{A(\boldsymbol{z})} \qquad &\xrightarrow{\text{log-exp}}\qquad \sigma \geq
        e^{\alpha}\\
    {\Set}_{= k}{(\mathcal{A})}(\boldsymbol{z}) = \dfrac{1}{k!}
    {A(\boldsymbol{z})}^k  \qquad &\xrightarrow{\text{log-exp}}\qquad
    \sigma_k \geq \log \dfrac{e^{\alpha k}}{k!}
    \end{split}
\end{align}
where $e^{\alpha} = {A(\boldsymbol{z})}$, $e^{\sigma} =
\Set{(\mathcal{A})}(\boldsymbol{z})$, and $e^{\sigma_k} = {\Set}_{=
k}{(\mathcal{A})}(\boldsymbol{z})$.

In this form it is clear that both log-exp transformations can be expressed in
terms of elementary {\sf DCP} functions. Hence, both $\Set{(\mathcal{A})}$ and
${\Set}_{= k}{(\mathcal{A})}$ can be effectively handled.

Let us now focus on the final cycle operator $\Cycle{(\mathcal{A})}$. Note that
\begin{align}
    \begin{split}
    \Cycle{(\mathcal{A})}(\boldsymbol{z}) = \log \dfrac{1}{1 -
    A(\boldsymbol{z})}\qquad &\xrightarrow{\text{log-exp}}\qquad \gamma \geq \log \log
    \dfrac{1}{1- e^{\alpha}}\\
    {\Cycle}_{= k}{(\mathcal{A})}(\boldsymbol{z}) = \dfrac{1}{k}
    {A(\boldsymbol{z})}^k  \qquad &\xrightarrow{\text{log-exp}}\qquad
    \gamma_k \geq \log \dfrac{e^{\alpha k}}{k}
    \end{split}
\end{align}
where $e^{\alpha} = {A(\boldsymbol{z})}$, $e^{\gamma} =
\Cycle{(\mathcal{A})}(\boldsymbol{z})$, and $e^{\gamma_k} = {\Cycle}_{=
k}{(\mathcal{A})}(\boldsymbol{z})$.

The log-exp transform of ${\Cycle}_{= k}$ is an elementary {\sf DCP} constraint.
Unfortunately, the same does not hold for ${\Cycle}$ and, in general,
expressions in form of
\begin{equation}\label{eq:paganini:cycle:loglog}
    \log \log \dfrac{1}{1 - e^x}.
\end{equation}
Although~\eqref{eq:paganini:cycle:loglog} is convex, it cannot be modelled in
terms of basic {\sf DCP} functions. Consequently, heuristic approaches (as discussed
above) or alternative convex programming techniques like the interior point method should be applied, instead.
More labelled constructions, and the ways to bypass this limitation are discussed in~\cref{section:barriers:cycle:set}.

\subsection{Sampler construction}
Given a combinatorial specification enriched with user-specified parameters and
their target expectations, it is possible to \emph{mechanically} compute
respective tuning values, and compile dedicated samplers. To illustrate this
point, we implemented a \emph{sampler compiler} called {\sf Boltzmann
Brain}\footnote{see~\url{https://github.com/maciej-bendkowski/boltzmann-brain}.}
supporting algebraic, and in particular rational, specifications.

Constructed samplers for general algebraic specifications use the principle of
\emph{anticipated rejection} whereas samplers for rational specifications
implement the idea of \emph{interruptible sampling}, see~\cite{BodGenRo2015}.
In addition, both sampler types are endowed with near-optimal decision trees
based on established branching probabilities~\cite{KY76}.  Combined,
these optimisations lead to remarkably efficient samplers, supporting all of the
algebraic specifications included in the current paper,
see~\cref{sec:applications}.

\section{Applications}\label{sec:applications}
\label{section:applications}
In this section we present several examples illustrating the wide range of
applications of our tuning techniques.

\subsection{Polyomino tilings.}
We start with a benchmark example of a rational specification defining ${n
\times 7}$ rectangular tilings using up to $126$ different tile variants (a toy
example of so-called transfer matrix models, cf.~\cite[Chapter V.6, Transfer
matrix models]{flajolet09}).

\begin{figure}[hbt!]
    \begin{center}
        \includegraphics[height=0.02\textheight]{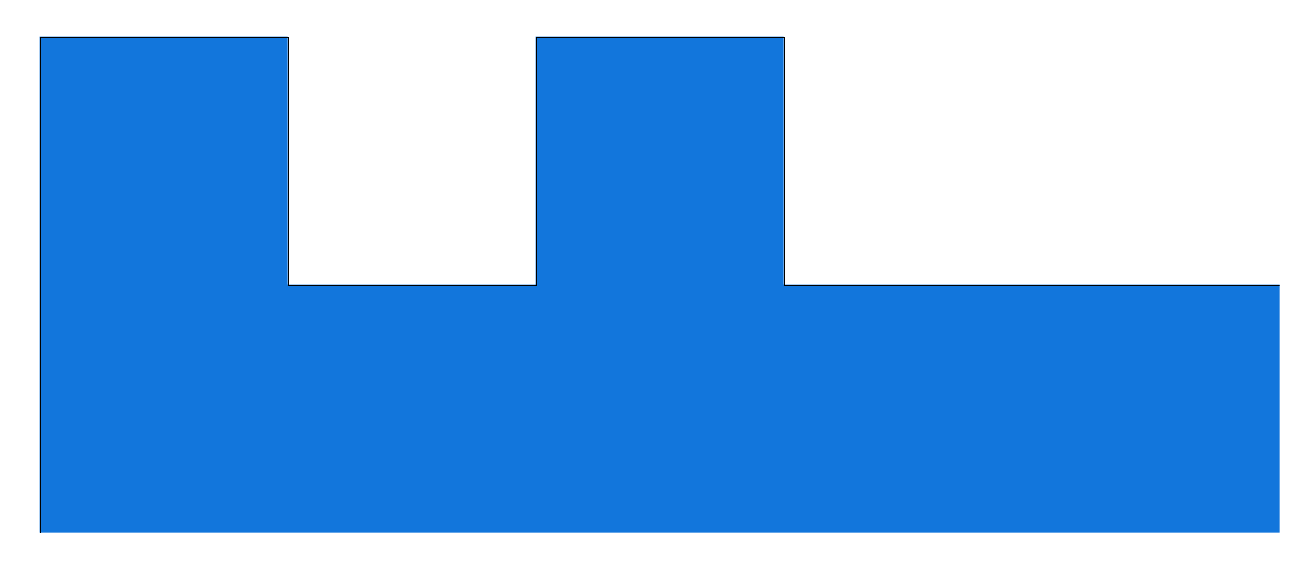} $\ $
        \includegraphics[height=0.02\textheight]{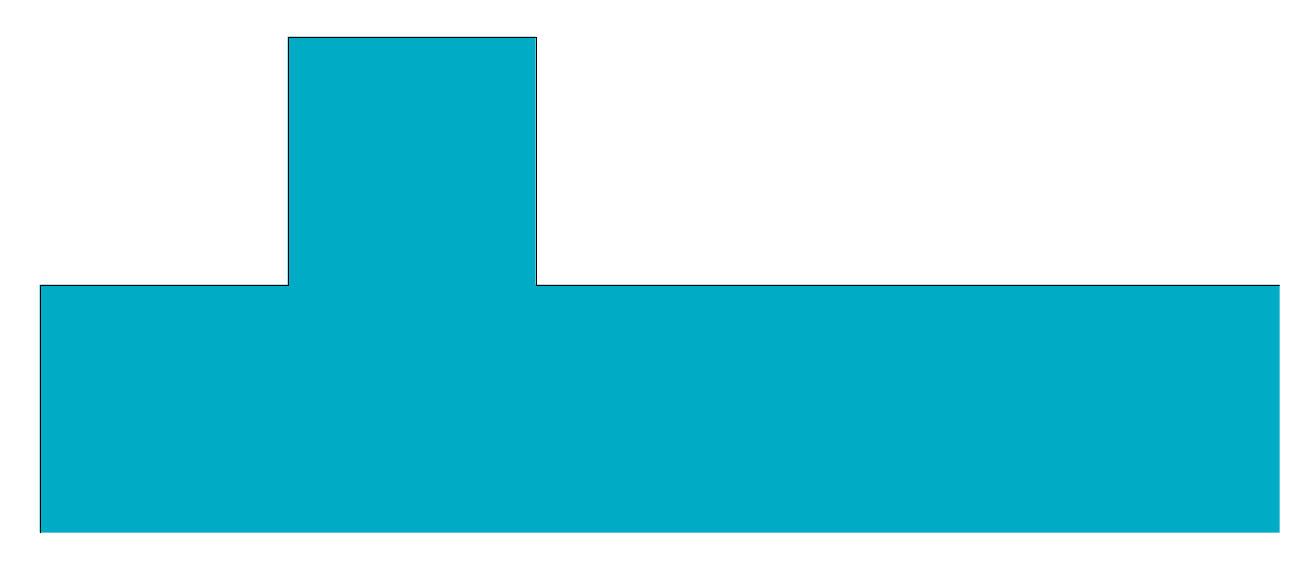} $\ $
        \includegraphics[height=0.02\textheight]{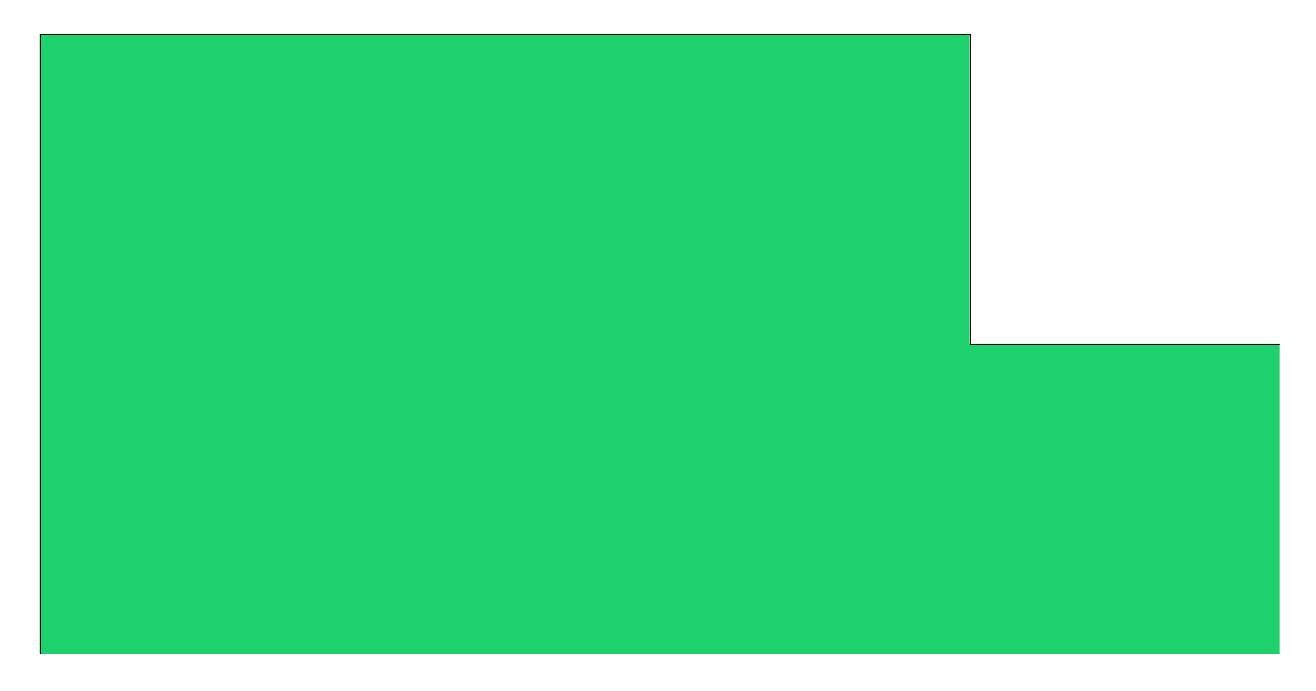} $\ $
        \includegraphics[height=0.02\textheight]{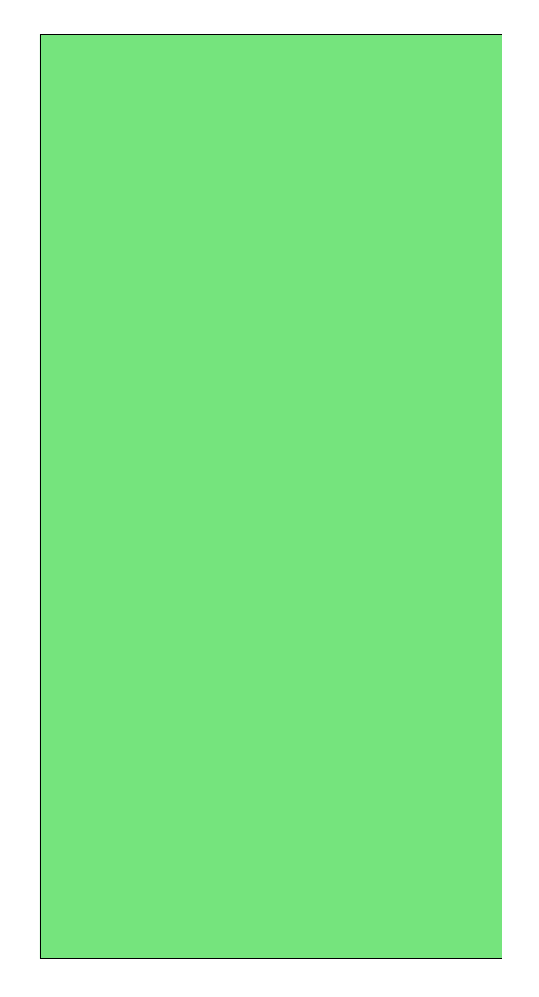} $\ $
        \includegraphics[height=0.02\textheight]{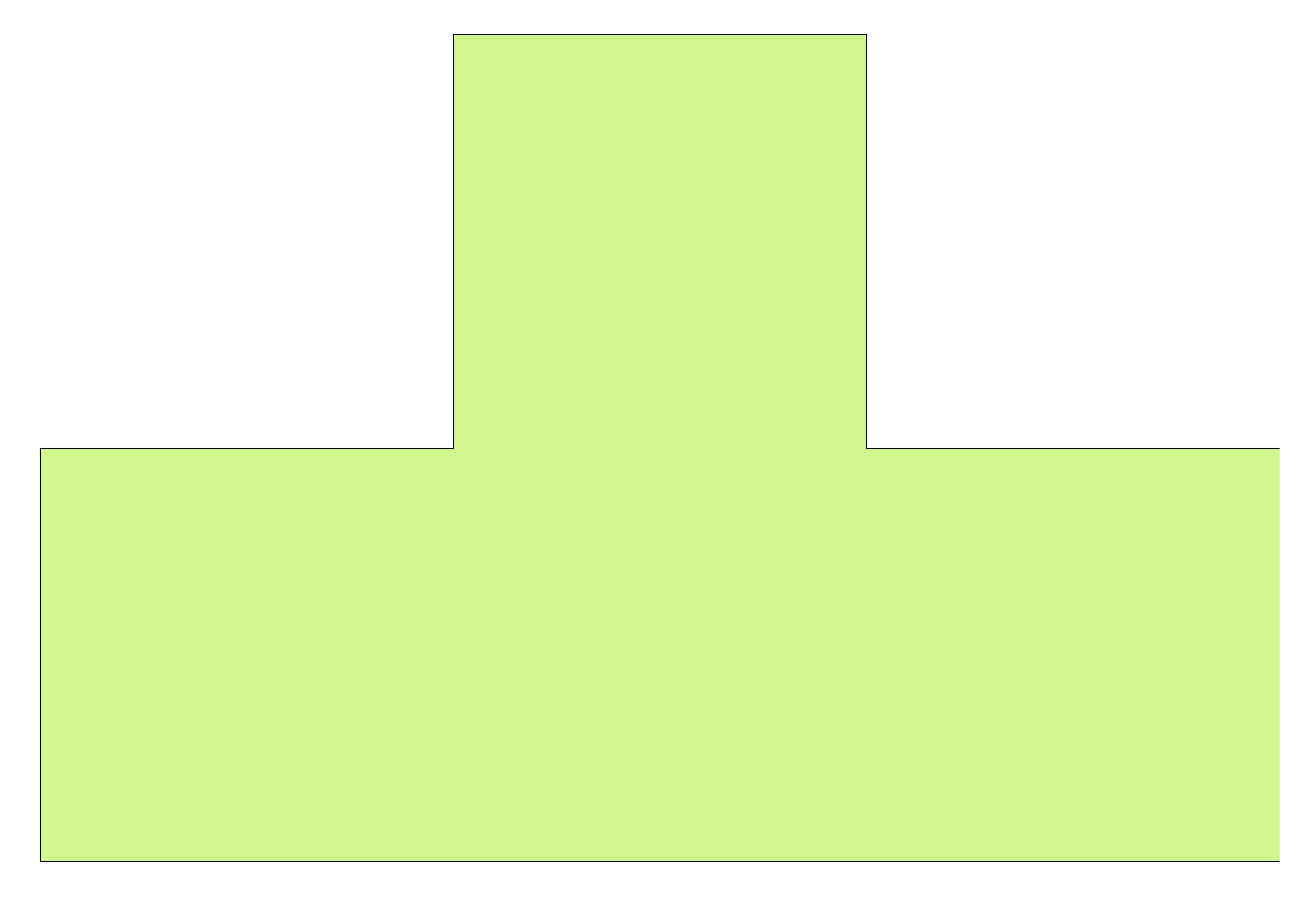} $\ $
        \includegraphics[height=0.02\textheight]{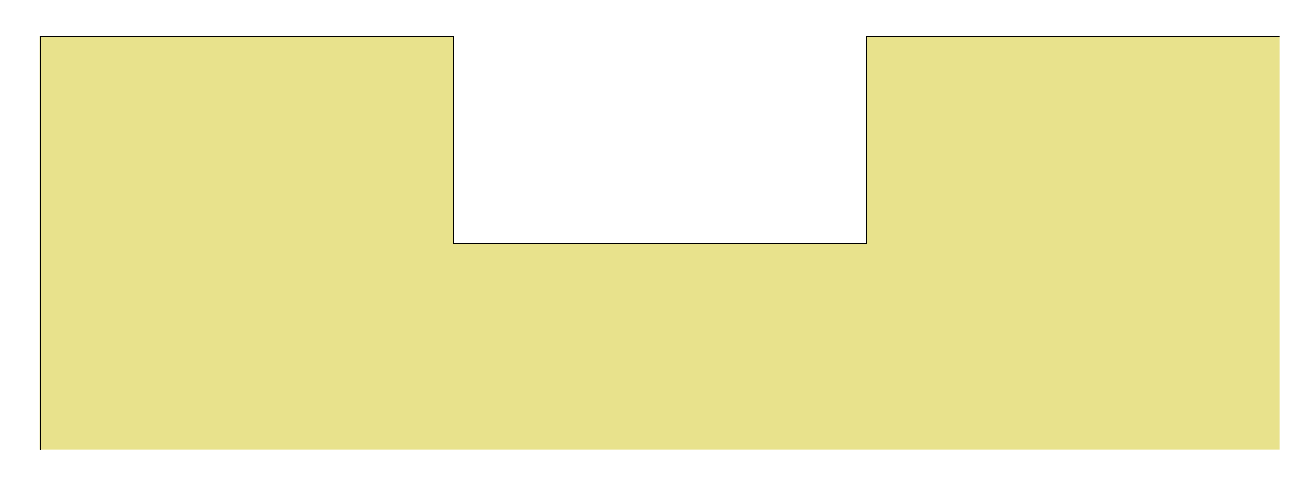} $\ $
        \includegraphics[height=0.02\textheight]{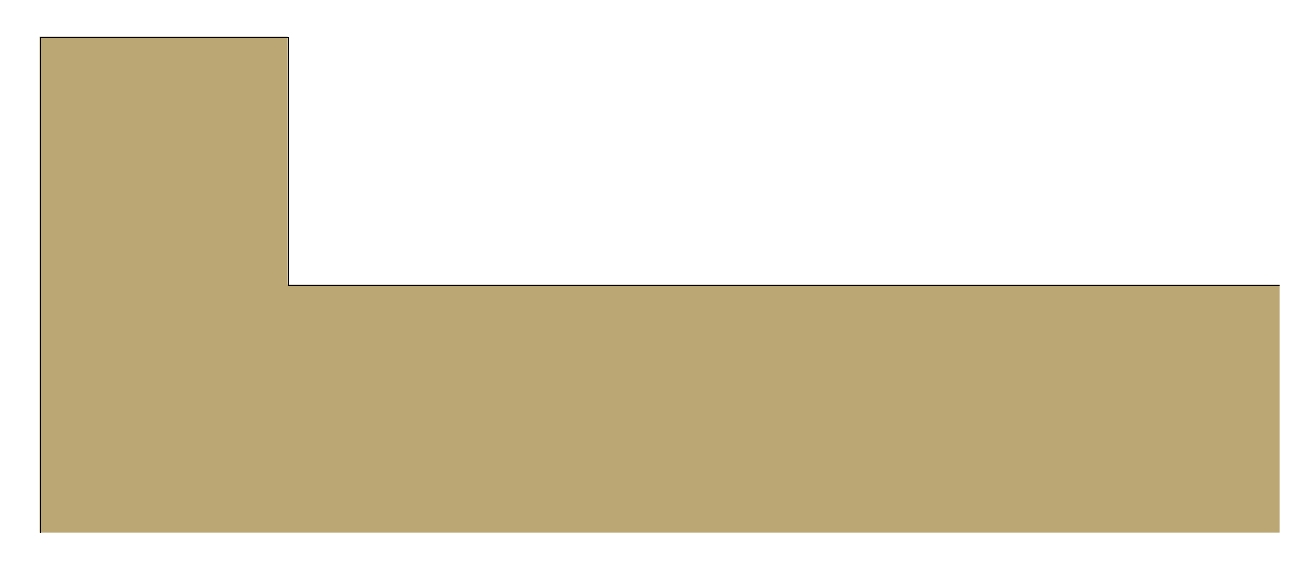} $\ $
        \includegraphics[height=0.02\textheight]{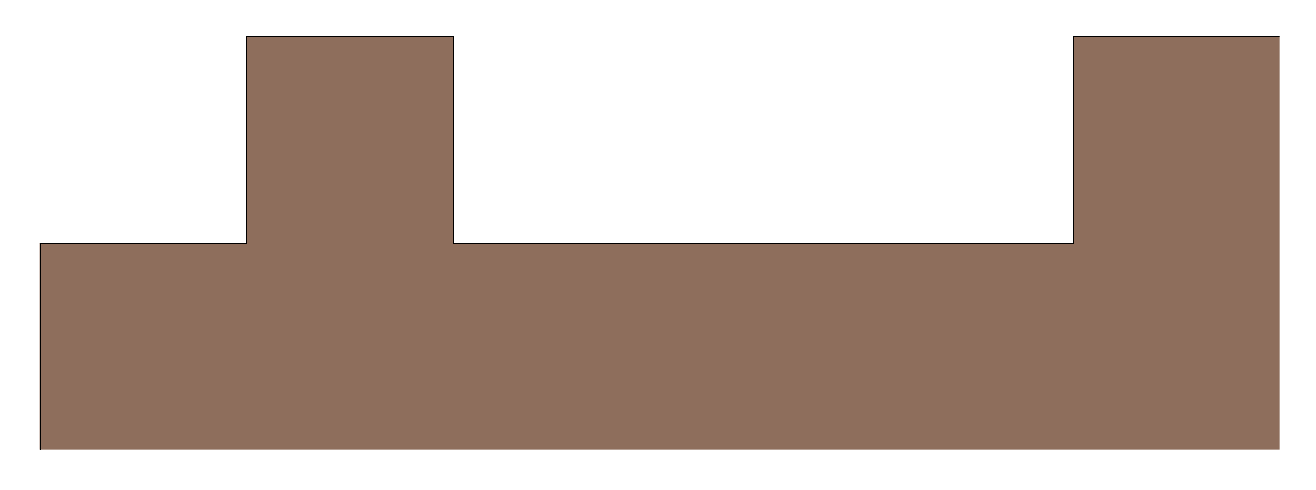} $\ $
        \includegraphics[height=0.02\textheight]{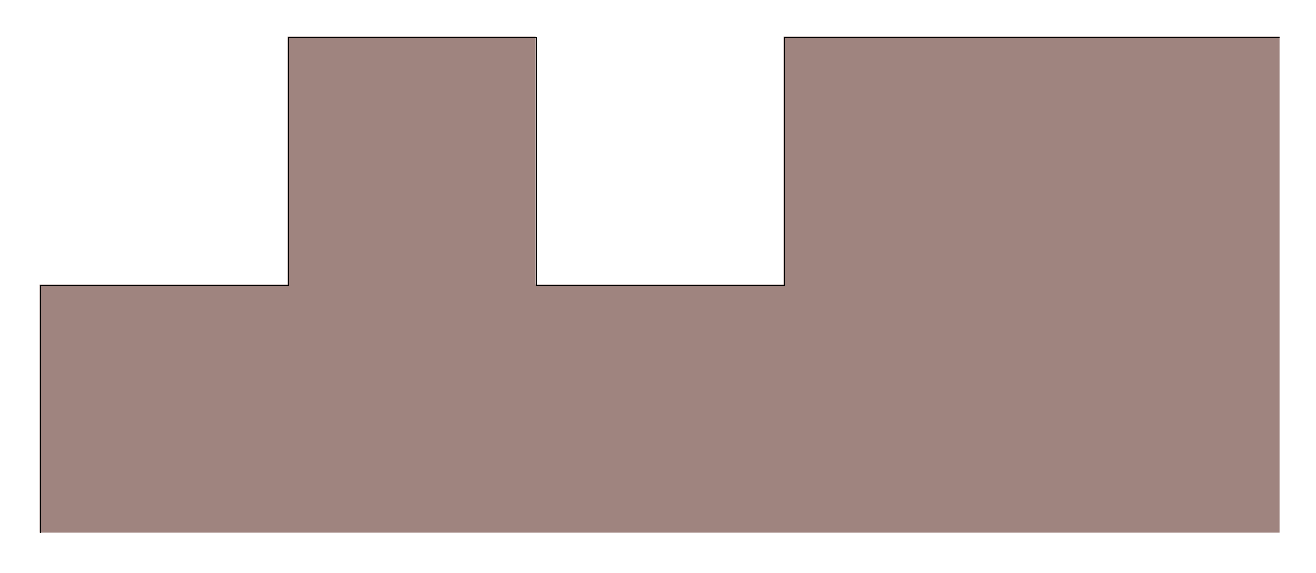} $\ $
        \includegraphics[height=0.02\textheight]{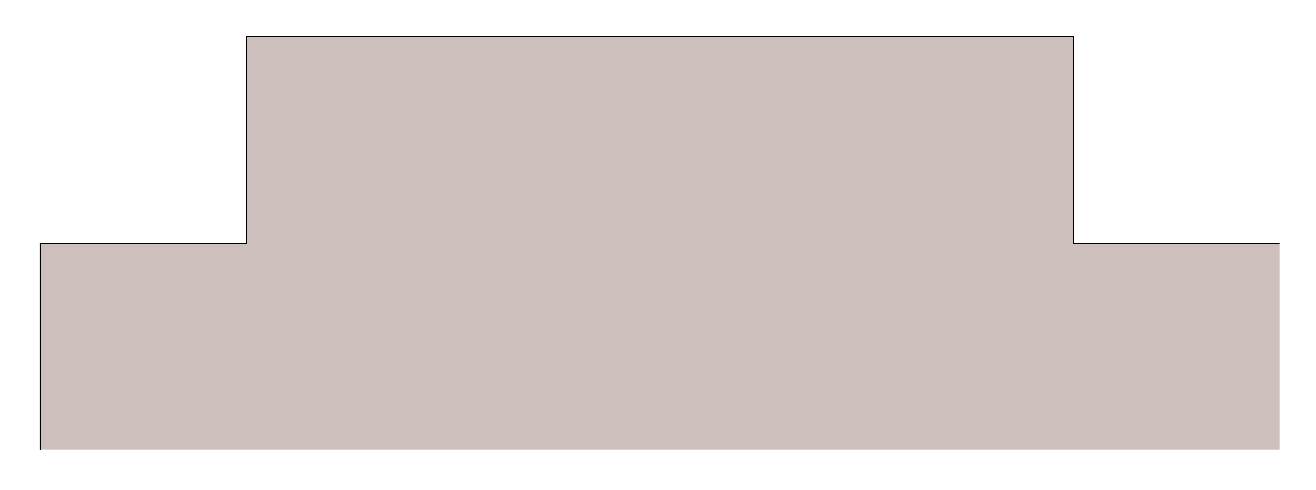}
    \end{center}
    \caption{Examples of admissible tiles}
    \label{fig:admissible:tiles}
\end{figure}

We begin the construction with defining the set $T$ of admissible tiles.  Each
tile $t \in T$ consists of two horizontal layers. The base layer is a single
connected block of width $w_t \leq 6$.  The second layer, placed on top of the
base one, is a subset (possibly empty) of $w_t$ blocks,
see \autoref{fig:admissible:tiles}.  For presentation purposes each
tile is given a unique, distinguishable colour.

Next, we construct the asserted rational specification following the general
construction method of defining a deterministic automaton with one state per
each possible partial tiling configuration using the set $T$ of available
tiles.  Tracking the evolution of attainable configurations while new tiles
arrive, we connect relevant configurations by suitable transition rules in the
automaton.  Finally, we (partially) minimise the constructed automaton removing
states unreachable from the initial empty configuration.  Once the automaton is
created, we tune the tiling sampler such that the target colour frequencies are
uniform, i.e.~each colour occupies, on average, approximately $\tfrac{1}{126}
\approx 0.7936\%$ of the outcome tiling area. \autoref{fig:tilings-7-126}
depicts an exemplary tiling generated by our sampler.
\begin{figure}[ht!]
\begin{subfigure}{.11\textwidth}
\centering
  \includegraphics[scale=0.8]{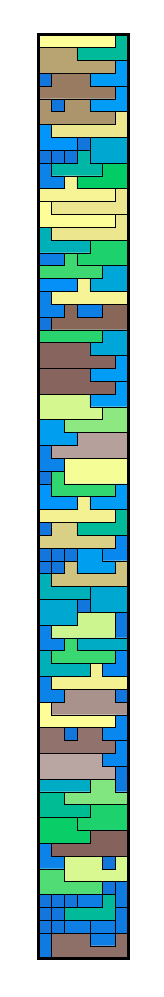}
\end{subfigure}
\begin{subfigure}{.11\textwidth}
\centering
  \includegraphics[scale=0.8]{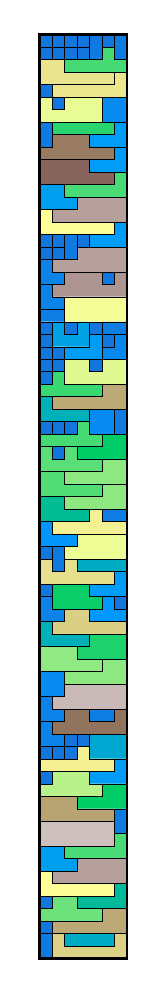}
\end{subfigure}
\begin{subfigure}{.11\textwidth}
\centering
  \includegraphics[scale=0.8]{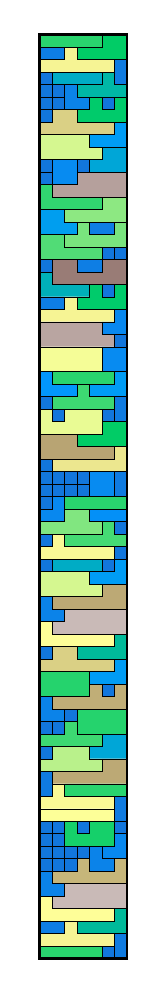}
\end{subfigure}
\begin{subfigure}{.11\textwidth}
\centering
  \includegraphics[scale=0.8]{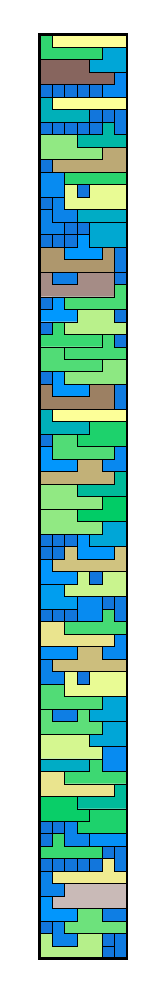}
\end{subfigure}
\begin{subfigure}{.11\textwidth}
\centering
  \includegraphics[scale=0.8]{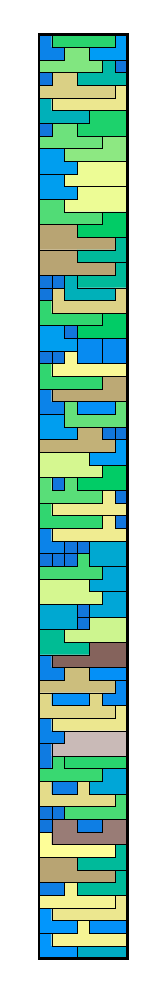}
\end{subfigure}
\begin{subfigure}{.11\textwidth}
\centering
  \includegraphics[scale=0.8]{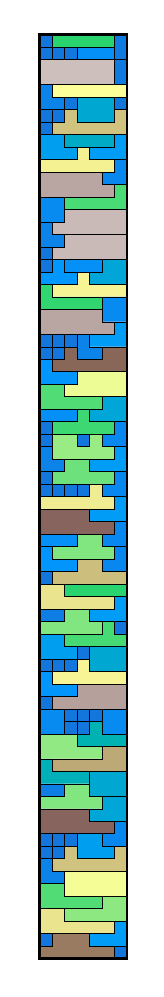}
\end{subfigure}
\begin{subfigure}{.11\textwidth}
\centering
  \includegraphics[scale=0.8]{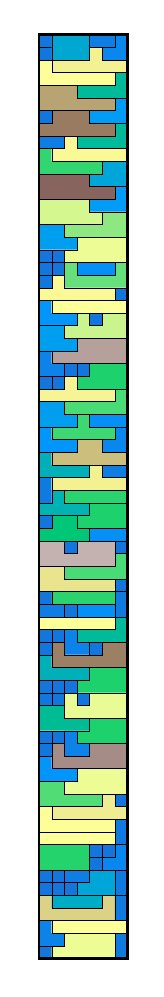}
\end{subfigure}
\begin{subfigure}{.11\textwidth}
\centering
  \includegraphics[scale=0.8]{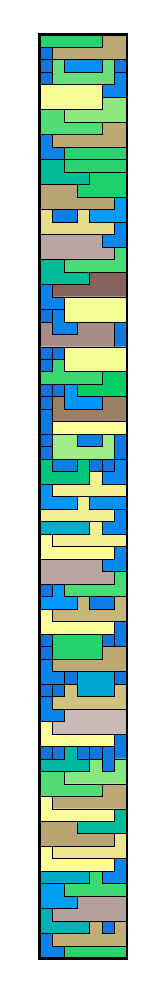}
\end{subfigure}
\caption{Eight random ${n \times 7}$ tilings of areas in the interval
$[500; 520]$ using in total $95$ different tiles.}
  \label{fig:tilings-7-126}
\end{figure}

\begin{Remark}
    The automaton corresponding to our ${n \times 7}$ tiling sampler consists of
    more than $2 000$ states and $28, 000$ transitions. Pushing this toy
    construction to its extreme, we were able to develop a sampler for ${n
    \times 9}$ tilings, using $1022$ different tiles. The corresponding
    automaton has more than $19, 000$ states and $357, 000$ edges.

    We remark that both examples are a notable improvement over the work of
    Bodini and Ponty~\cite{BodPonty} who were able to sample ${n \times 6}$
    tilings using $7$ different tiles with a corresponding automaton consisting
    of roughly $1 500$ states and $3 200$ transitions.
\end{Remark}

\subsection{Simply-generated trees with node degree constraints.}
Next, we give an example of simple varieties of plane trees with fixed sets of
admissible node degrees, satisfying the general equation
\begin{equation}
y(z) = z \phi(y(z))
\quad \text{for some polynomial} \quad \phi \colon \mathbb{C} \to \mathbb{C}\,
.
\end{equation}
Let us consider the case of plane trees where nodes have degrees in the set $D
= \set{0,\ldots,9}$, i.e.~$\phi(y(z)) = a_0 + a_1 y(z) + a_2 {y(z)}^2 + \cdots
+ a_{9} {y(z)}^{9}$.  Here, the numbers \( a_0, a_1, a_2, \ldots, a_{9} \) are
non-negative real coefficients.  We tune the corresponding algebraic
specification so to achieve a target frequency of $1\%$ for all nodes of
degrees $d \geq 2$. Frequencies of nodes with degrees $d \leq 1$ are left
undistorted. For presentation purposes all nodes with equal degree are given
the same unique, distinguishable colour. \autoref{fig:tree} depicts two
exemplary trees generated in this manner.
 \begin{figure}[ht!]
  \begin{subfigure}{.32\paperwidth}
      \centering
  \includegraphics[scale=0.07]{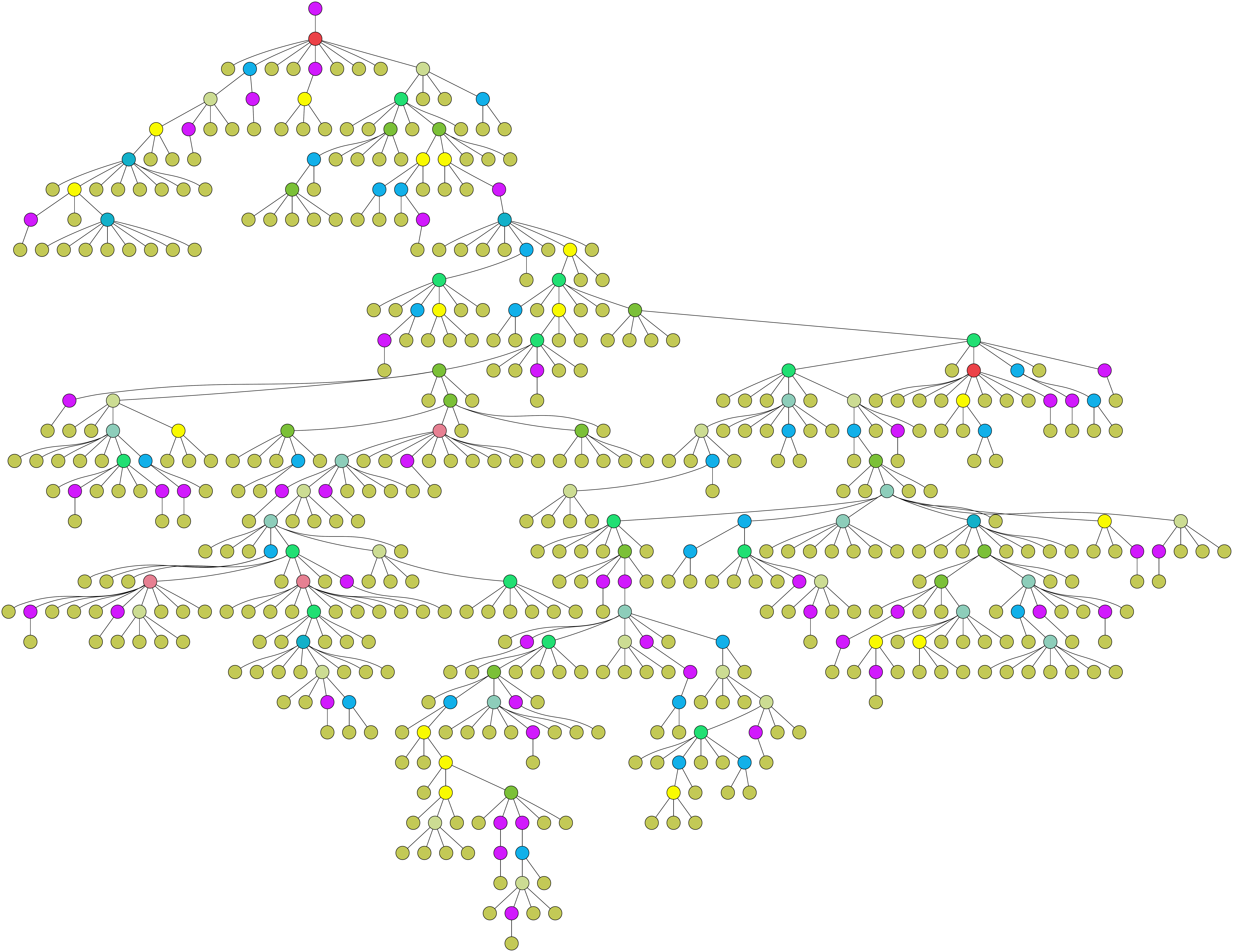}
\end{subfigure}
\begin{subfigure}{.4\paperwidth}
    \centering
  \includegraphics[width=\textwidth]{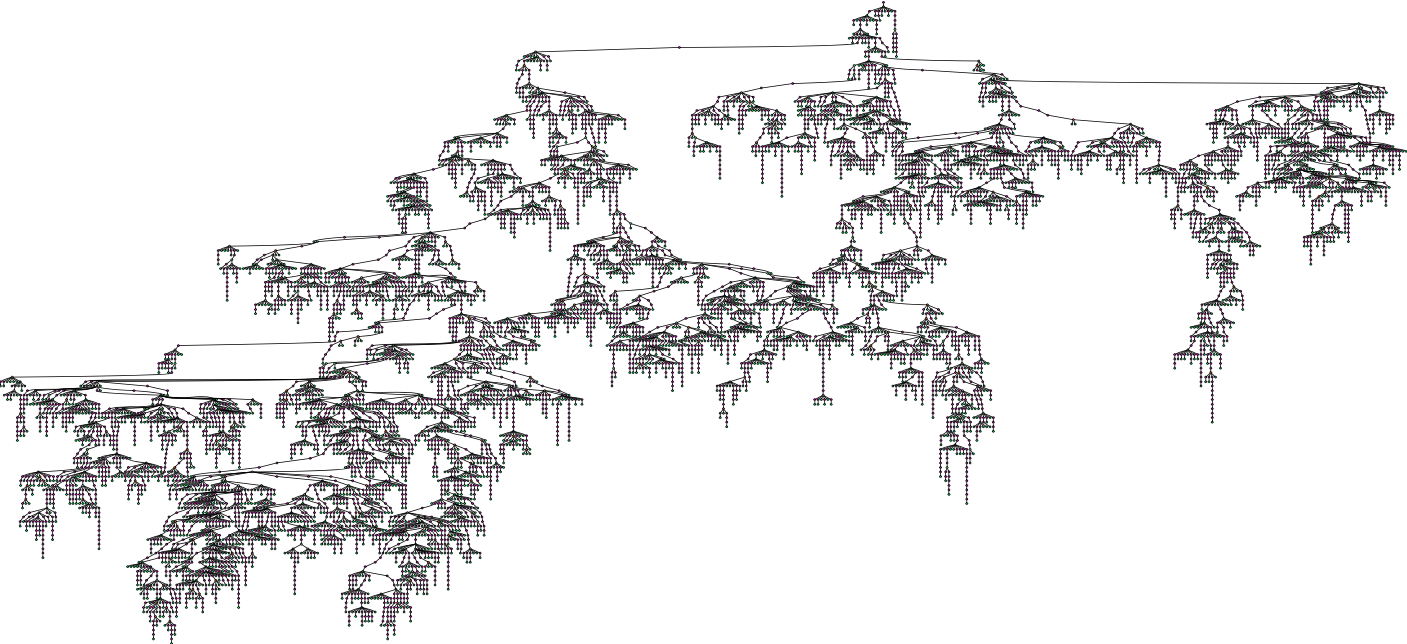}
\end{subfigure}
 \caption{Two random plane trees with degrees in the set ${D =
     \set{0,\ldots,9}}$. On the left, a tree of size in between $500$ and $550$;
     on the right, a tree of size in the interval $[10, 000; 10, 050]$.}
\label{fig:tree}
 \end{figure}

Empirical frequencies for the right tree of~\autoref{fig:tree} and a
simply-generated tree of size in between $10,000$ and $10,050$ with default
node degree frequencies are included in~\autoref{fig:tree-freqs}.

\begin{table*}[ht!]
\scalebox{.8}{
\begin{tabular}{c | c | c | c | c | c | c | c | c | c | c}
Node degree & $0$ & $1$ & $2$ & $3$ & $4$ & $5$ &
$6$ & $7$ & $8$ & $9$ \\ \hline
Tuned frequency & - - - & - - - & $1.00\%$ & $1.00\%$ & $1.00\%$ &
$1.00\%$ & $1.00\%$ & $1.00\%$ & $1.00\%$ & $1.00\%$\\
Observed frequency & $35.925\%$ & $56.168\%$ & $0.928\%$ & $0.898\%$ & $1.098\%$ &
$0.818\%$ & $1.247\%$ & $0.938\%$ & $1.058\%$ & $0.918\%$\\
Default frequency & $50.004\%$ & $24.952\%$ & $12.356\%$ & $6.322\%$ &
$2.882\%$ & $1.984\%$ & $0.877\%$ & $0.378\%$ & $0.169\%$ & $0.069\%$
\end{tabular}}
\caption{Empirical frequencies of the node degree distribution.}
\label{fig:tree-freqs}
\end{table*}

We briefly remark that for this particular problem, Bodini, David and Marchal
proposed a different, bit-optimal sampling procedure for random trees with
given partition of node degrees~\cite{DBLP:conf/caldam/BodiniJM16}.

\subsection{Variable distribution in plain $\lambda$-terms.}
To exhibit the benefits of distorting the intrinsic distribution of various
structural patterns in algebraic data types, we present an example
specification defining so-called plain $\lambda$\nobreakdash-terms with explicit control
over the distribution of de~Bruijn indices.

In their nameless representation due to de~Bruijn~\cite{deBruijn1972}
$\lambda$\nobreakdash-terms are defined by the formal grammar $L ::= \lambda L~|~(L L)~|~D$
where $D = \set{\idx{0},\idx{1},\idx{2},\ldots}$ is an infinite denumerable set
of so-called indices (cf.~\cite{BendkowskiGLZ16,GittenbergerGolebiewskiG16}).
Assuming that we encode de~Bruijn indices as a sequence of successors of zero
(i.e.~use a unary base representation), the class $\mathcal{L}$ of plain
$\lambda$\nobreakdash-terms can be specified as \( \mathcal{L} = \mathcal{Z} \mathcal{L} +
\mathcal{Z} {\mathcal{L}}^2 + \mathcal{D} \) where \( \mathcal{D} = \mathcal{Z}
\Seq(\mathcal{Z}) \).  In order to control the distribution of de~Bruijn
indices we need a more explicit specification for de~Bruijn indices. For
instance: \[ \mathcal{D} = \mathcal{U}_0 \mathcal{Z} + \mathcal{U}_1
    {\mathcal{Z}}^2 + \cdots + \mathcal{U}_k {\mathcal{Z}}^{k+1} +
\mathcal{Z}^{k+2} \Seq(\mathcal{Z})\, . \] Here, we roll out the $k+1$ initial indices and
assign distinct marking variables to each one of them, leaving the remainder
sequence intact. In doing so, we are in a position to construct a sampler tuned
to enforce a uniform distribution of $8\%$ among all marked indices,
i.e.~indices $\idx{0},\idx{1},\ldots,\idx{8}$, distorting in effect their
intrinsic geometric distribution.

\autoref{fig:lambda-term}~illustrates two random $\lambda$\nobreakdash-terms with
such a new distribution of indices. For presentation purposes, each index
in the left picture is given a distinct colour.

 \begin{figure}[ht!]
  \begin{subfigure}{.4\textwidth}
\centering
  \includegraphics[scale=0.1]{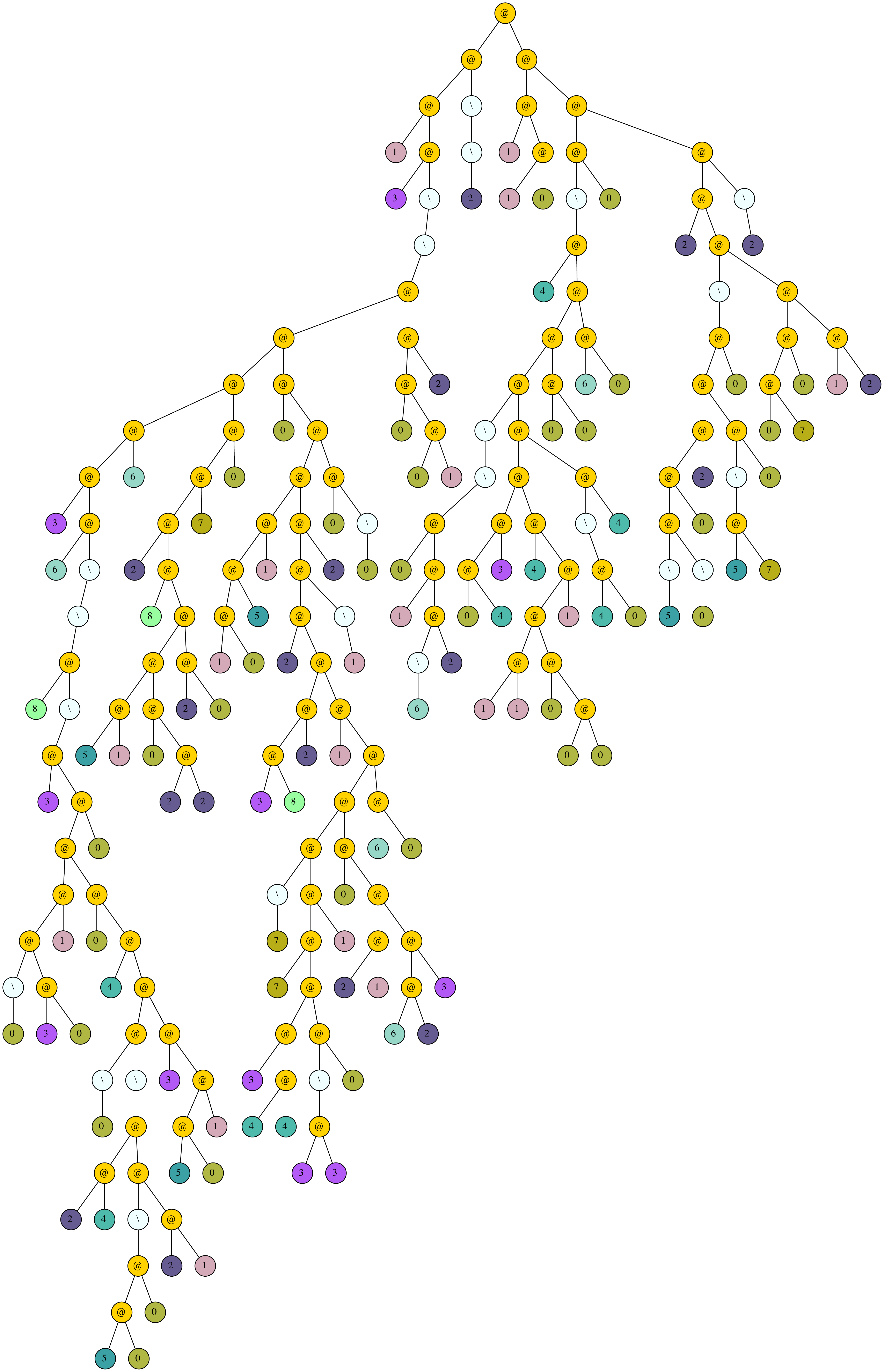}
\end{subfigure}
\begin{subfigure}{.4\textwidth}
\centering
  \includegraphics[width=\textwidth]{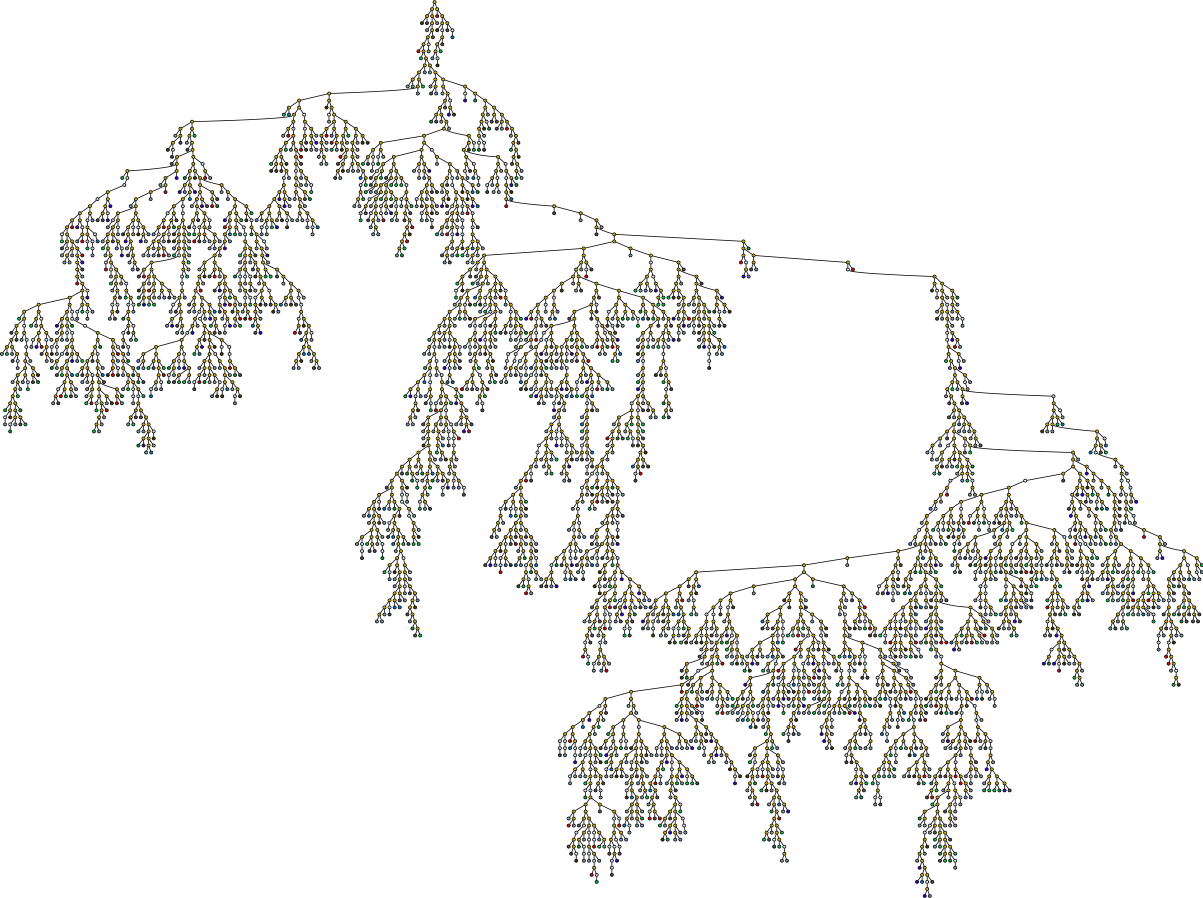}
\end{subfigure}
 \caption{On the left, a random $\lambda$\nobreakdash-term of size in the interval
 $[500;550]$; on the right, a larger example of a random $\lambda$\nobreakdash-term of size between $10,000$ and $10,050$.}
\label{fig:lambda-term}
 \end{figure}

Empirical frequencies for the right term of~\autoref{fig:lambda-term}
and a plain $\lambda$\nobreakdash-term of size in between $10,000$ and $10,050$ with
default de~Bruijn index frequencies are included
in~\autoref{fig:lambda-term-freqs}.

\begin{table*}[ht!]
\scalebox{.85}{
\begin{tabular}{c | c | c | c | c | c | c | c | c | c}
Index & $\idx{0}$ & $\idx{1}$ & $\idx{2}$ & $\idx{3}$ & $\idx{4}$ & $\idx{5}$ &
$\idx{6}$ & $\idx{7}$ & $\idx{8}$\\ \hline
Tuned frequency & $8.00\%$ & $8.00\%$ & $8.00\%$ & $8.00\%$ & $8.00\%$ &
$8.00\%$ & $8.00\%$ & $8.00\%$ & $8.00\%$\\
Observed frequency & $7.50\%$ & $7.77\%$ & $8.00\%$ & $8.23\%$ & $8.04\%$ &
$7.61\%$ & $8.53\%$ & $7.43\%$ & $9.08\%$\\
Default frequency & $21.91\%$ & $12.51\%$ & $5.68\%$ & $2.31\%$ &
$0.74\%$ & $0.17\%$ & $0.20\%$ & $0.07\%$ & - - -
\end{tabular}}
\caption{Empirical frequencies (with respect to the term size) of index distribution.}
\label{fig:lambda-term-freqs}
\end{table*}

Let us note that algebraic data types, an essential conceptual ingredient of
various functional programming languages such as Haskell or OCaml, and the
random generation of their inhabitants satisfying additional structural or
semantic properties is one of the central problems present in the field of
property-based software testing (see, e.g.~\cite{Claessen-2000,palka2012}). In
such an approach to software quality assurance, programmer-declared function
invariants (so-called properties) are checked using random inputs, generated
accordingly to some predetermined, though usually not rigorously controlled,
distribution. In this context, our techniques provide a novel and effective
approach to generating random algebraic data types with fixed average
frequencies of type constructors. In particular, using our methods it is
possible to \emph{boost} the intrinsic frequencies of certain desired subpatterns or
\emph{diminish} those which are unwanted.

\subsection{Weighted partitions.}
\label{subsection:weighted:partitions}
Integer partitions are one of the most intensively studied objects in number
theory, algebraic combinatorics and statistical physics. Hardy and Ramanujan
obtained the famous asymptotics which has later been refined by
Rademacher~\cite[Chapter VIII]{flajolet09}. In his article~\cite{Vershik1996},
Vershik considers several combinatorial examples related to statistical
mechanics and obtains the limit shape for a random integer partition of size \(
n \) with \( \alpha \sqrt{n} \) parts and summands bounded by \( \theta
\sqrt{n} \).
Let us remark that Bernstein, Fahrbach, and
Randall~\cite{doi:10.1137/1.9781611975062.10} have recently analysed
the complexity of exact-size Boltzmann sampler for weighted partitions.
In the model of ideal gas, there are several particles (bosons)
which form a so-called assembly of particles. The overall energy of the system
is the sum of the energies
\(
\Lambda = \sum_{i=1}^N \lambda_{\vec i}
\)
where \( \lambda_i \) denotes the energy of \( i \)-th particle.  We assume
that energies are positive integers. Depending on the energy level \( \lambda
\) there are \( j(\lambda) \) possible available states for each particle; the
function \( j(\lambda) \) depends on the physical model. Since all the
particles are indistinguishable, the generating function \( P(z) \) for the
number of assemblies \( p(\Lambda) \) with energy \( \Lambda \) takes the form
\begin{equation}
    P(z) = \sum_{\Lambda=0}^\infty p(\Lambda) z^\Lambda =
        \prod_{\lambda > 0}
        \dfrac{1}{(1 - z^\lambda)^{j(\lambda)}}
    \enspace .
\end{equation}
In the model of \( d \)-dimensional harmonic trap (also known as the
Bose-Einstein condensation) according to \cite{chase1999canonical,
haugset1997bose, lucietti2008asymptotic} the number of states for a particle
with energy \( \lambda \) is \( { d + \lambda - 1 \choose \lambda } \) so that
each state can be represented as a multiset with \( \lambda \) elements having
\( d \) different colours.  Accordingly, an assembly is a multiset of particles
(since they are bosons and hence indistinguishable) therefore the generating
function for the number of assemblies takes the form
\begin{equation}
    P(z) =
    \MSet(\MSet\nolimits_{\geq 1}(\mathcal Z_1 + \cdots + \mathcal Z_d))
    \enspace .
\end{equation}

It is possible to control the expected frequencies of colours using our tuning
procedure and sample resulting assemblies as Young tableaux. Each row
corresponds to a particle whereas the colouring of the row displays the
multiset of included colours, see~\autoref{fig:bose:einstein}. We also
generated weighted partitions of expected size \( 1000 \) (which are too large
to display) with tuned frequencies of \( 5 \) colours,
see~\autoref{table:partition:frequencies}.

 \begin{figure}[ht!]
\begin{subfigure}{0.2\textwidth}
\centering
    \includegraphics[height=0.2\textheight]{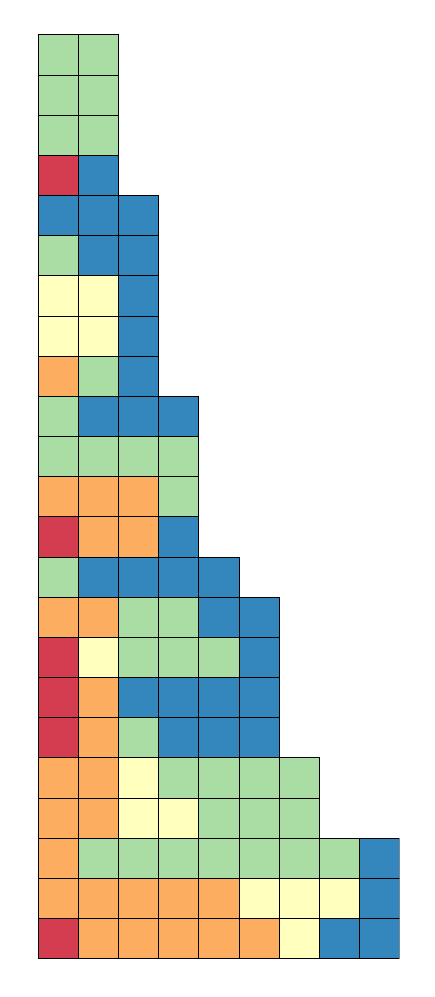}
    \caption{\tiny [5, 10, 15, 20, 25]}
\end{subfigure}
\begin{subfigure}{0.25\textwidth}
\centering
    \includegraphics[height=0.2\textheight]{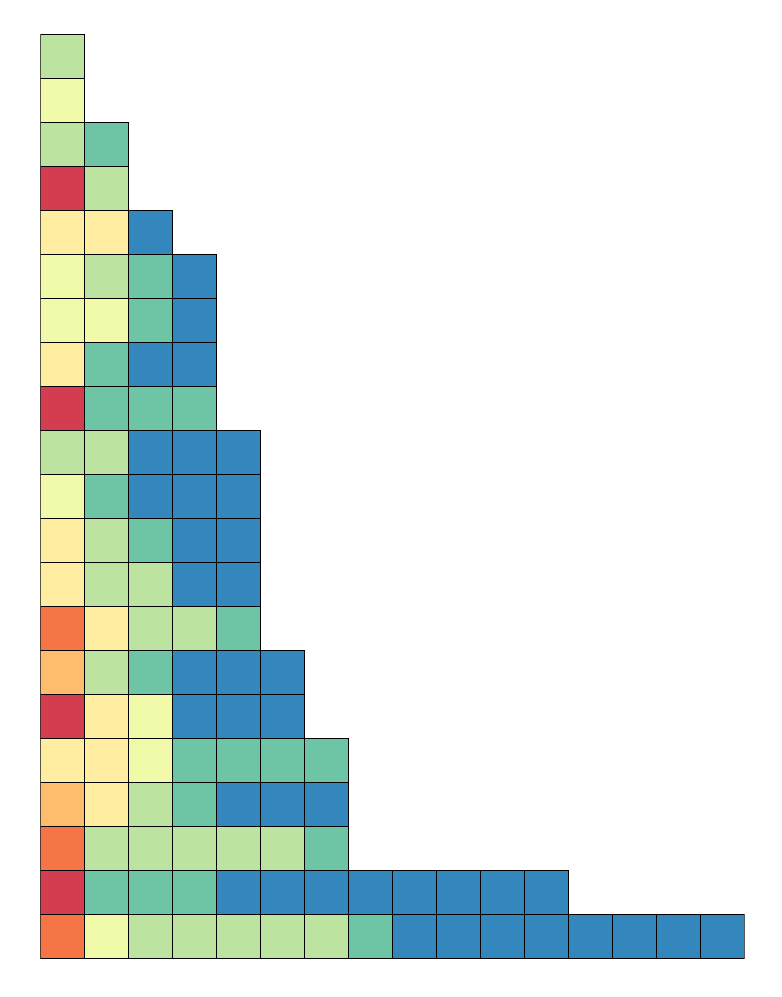}
    \caption{\tiny [4,4,4,4, 10, 20, 30, 40]}
\end{subfigure}
\begin{subfigure}{0.25\textwidth}
\centering
    \includegraphics[height=0.2\textheight]{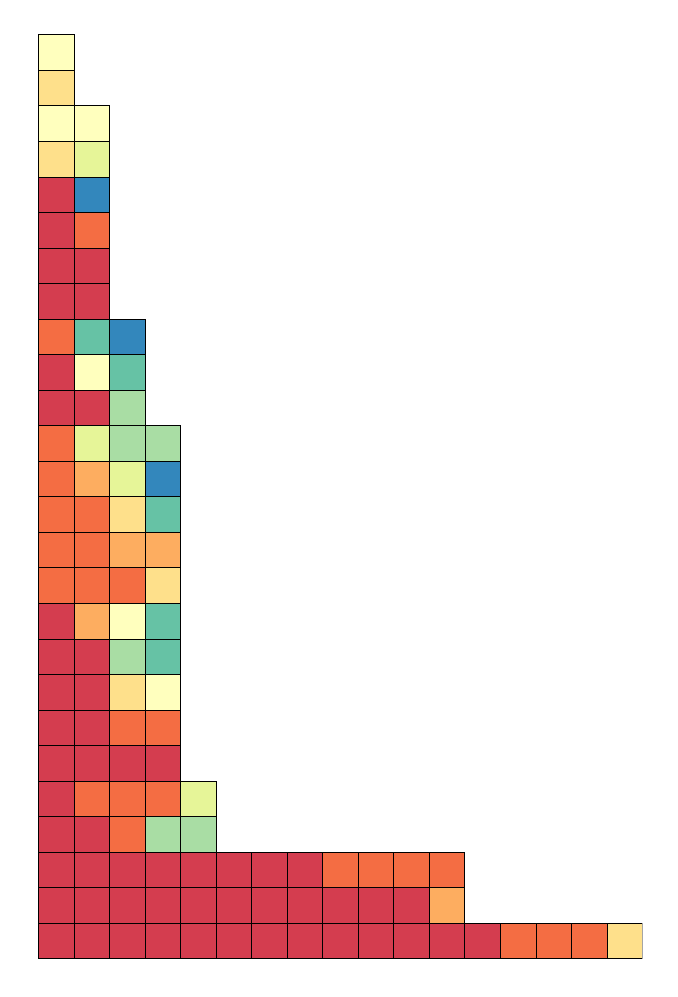}
    \caption{\tiny [80, 40, 20, 10, 9, 8, 7, 6, 5]}
\end{subfigure}
\begin{subfigure}{0.25\textwidth}
\centering
    \includegraphics[height=0.2\textheight]{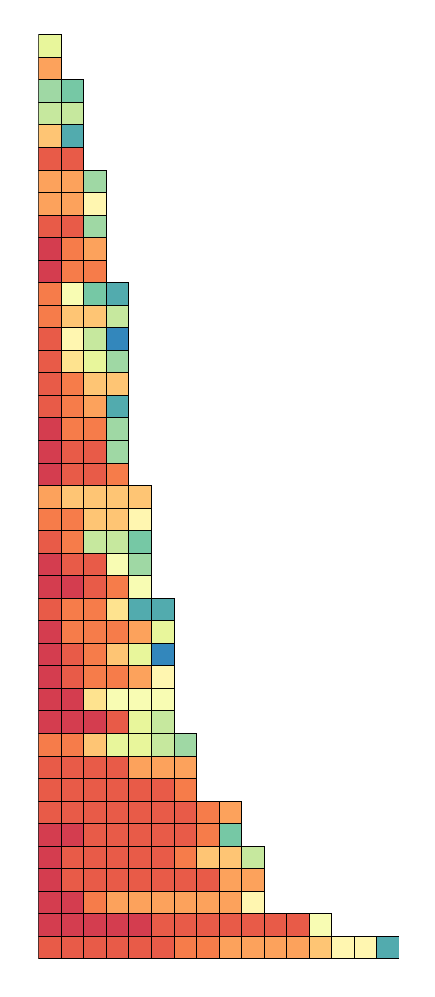}
    \caption{\tiny [20, 60, 30, 20, 10, $5^9$]}
\end{subfigure}
    \caption{Young tableaux corresponding to Bose--Einstein condensates with
        expected numbers of different colours. Notation \( [c_1, c_2, \ldots,
        c_k] \) provides the expected number \( c_j \) of the \( j \)-th
        colour, \( c_k^m \) is a shortcut for \( m \) occurrences of \( c_k \).}
    \label{fig:bose:einstein}
\end{figure}

\begin{table}[ht!]
\begin{tabular}{c | c | c | c | c | c | c}
Colour index & $\idx{1}$ & $\idx{2}$ & $\idx{3}$ & $\idx{4}$ & $\idx{5}$ & size\\ \hline
    Tuned frequency    & $0.03$ & $0.07$ & $0.1$ & $0.3$ & $0.5$ & 1000\\ \hline
                       & $0.03$ & $0.08$ & $0.07$ & $0.33$ & $0.49$ & 957\\
                       & $0.03$ & $0.06$ & $0.09$ & $0.28$ & $0.54$ & 1099\\
    Observed frequency & $0.03$ & $0.08$ & $0.09$ & $0.34$ & $0.46$ & 992\\
                       & $0.04$ & $0.07$ & $0.1$  & $0.31$ & $0.49$ & 932\\
                       & $0.04$ & $0.09$ & $0.1$  & $0.25$ & $0.52$ & 1067\\
\end{tabular}
\caption{Empirical frequencies of colours observed in random partition.}
\label{table:partition:frequencies}
\end{table}

Let us briefly explain our generation procedure. Boltzmann sampling for the outer \(
\MSet \) operator is described in~\cite{flajolet2007boltzmann}.
The sampling of
inner \( \MSet_{\geq 1}(\mathcal Z_1 + \ldots + \mathcal Z_d) \) is more
delicate. The generating function for this multiset can be written as
\begin{equation}
    \MSet\nolimits_{\geq 1}( z_1 + \cdots + z_d ) = \prod_{i=1}^d \dfrac{1}{1 - z_i} - 1
    \enspace .
\end{equation}
In order to correctly calculate the branching probabilities, we introduce slack
variables \( s_1, \ldots, s_d \) satisfying \( (1 + s_i) = (1 - z_i)^{-1}
\). Boltzmann samplers for the newly determined combinatorial classes \( \Gamma
\mathcal S_i \) are essentially Boltzmann samplers for \( \Seq_{\geq
1}(\mathcal Z_i) \). Let us note that after expanding brackets the expression
becomes
\begin{multline}
    \MSet\nolimits_{\geq 1} (z_1 + \cdots + z_d) ={}
    (s_1 + \cdots + s_d)
     + (s_1 s_2 + \cdots + s_{d-1} s_d)
     + \cdots + s_1 s_2 \ldots s_d.
\end{multline}
The total number of summands is \( 2^{d} - 1 \) where each summand corresponds
to choosing some subset of colours. Finally, we pre-compute all the symmetric
polynomials and efficiently handle the branching process in quadratic time using
a dynamic programming approach discussed
in~\cref{section:symmetric:polynomials}.

\subsection{Multi-partite rooted labelled trees}
\label{section:cayley:trees}
Consider a family of rooted labelled trees, such that the children of each node
are not ordered. The exponential generating function of
such trees \( T(z) \) satisfies the equation
  \begin{equation}
    T(z) = z e^{T(z)}.
  \end{equation}
In this example, we suggest an alteration of this model, where the nodes on each
level have distinct colours. We consider a periodic system of colouring, where
the levels \( 1, 2, \ldots, d \) have distinct colours, and then the colours
repeat, \ie the levels \( d+1, \ldots, 2d \) have the same colours as the levels
\( 1, 2, \ldots, d \). Let \( u_1, \ldots, u_d \) be the marking variables for
the respective colours, and let \( T_1, \ldots, T_d \) denote multivariate
exponential generating functions for trees whose root is coloured respectively,
with the colour \( 1, 2, \ldots, d \). Then, these functions satisfy the system
of functional equations
\begin{align}
  \begin{split}
    T_1(z, u_1, \ldots, u_d) &= z u_1 e^{T_{2}(z, u_1, \ldots, u_d)}
    ,\\
    T_2(z, u_1, \ldots, u_d) &= z u_2 e^{T_{3}(z, u_1, \ldots, u_d)}
    ,\\
    & \vdots \\
    T_d(z, u_1, \ldots, u_d) &= z u_d e^{T_{1}(z, u_1, \ldots, u_d)}.
  \end{split}
\end{align}
Using our software (see~\cref{sec:paganini}), we implement the multi-parametric
tuning and sampling when \( d = 10 \) and the proportions of vertices of the
respective colours are sorted in an arithmetic progression
$0.01, 0.03, 0.05,
0.07, 0.09, 0.11, 0.13, 0.15, 0.17, 0.19$.

An example of a resulting tree of size 1665 is shown in~\cref{fig:cayley}, and
the empirical frequencies of the colours inside this tree are given
in~\cref{fig:cayley:colours}.

\begin{figure}[hbt!]
    \RawFloats
\centering
\begin{minipage}[t]{0.5\textwidth}
    \includegraphics[width=\textwidth]{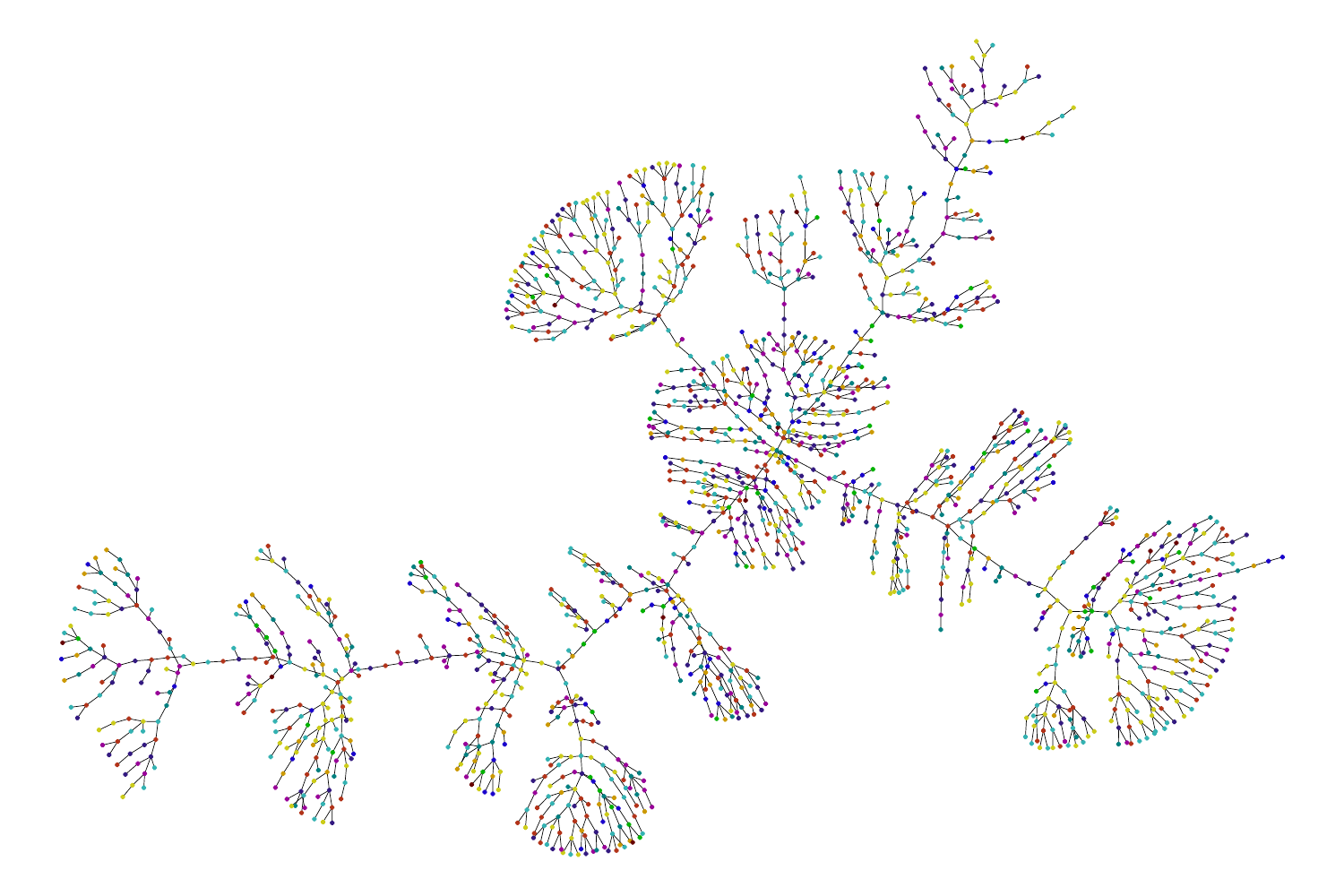}
    \caption{Random coloured Cayley tree of size 1665 drawn with Kamada--Kawai
    algorithm~\cite{kamada1989algorithm}.}
    \label{fig:cayley}
\end{minipage}%
\begin{minipage}[t]{0.5\textwidth}
    \includegraphics[width=\textwidth]{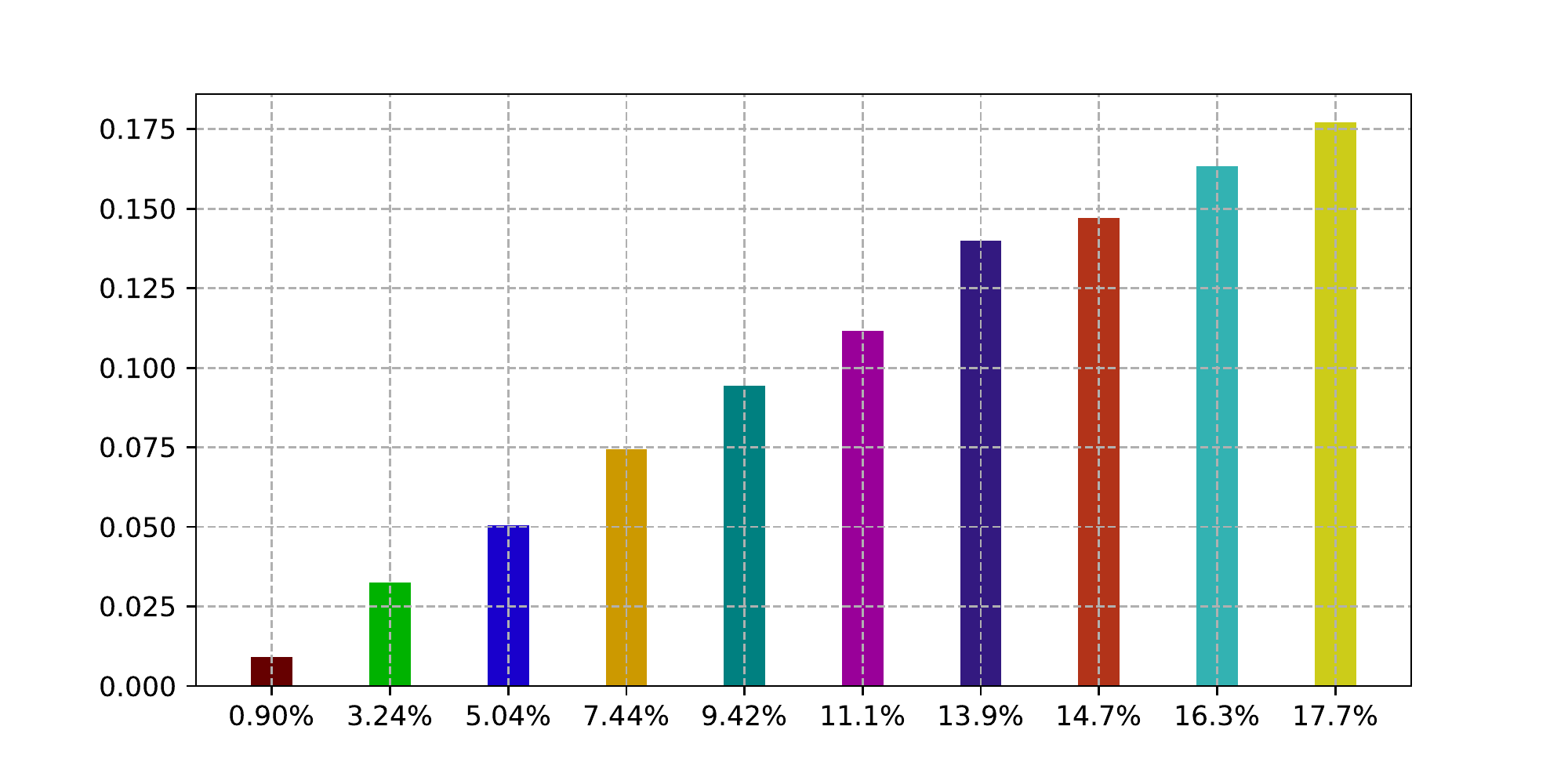}
    \caption{Histogram of colour frequencies for Cayley tree.}
    \label{fig:cayley:colours}
\end{minipage}
\end{figure}

Note that even in such a simple example where the dependency graph forms a
cycle, and the structure of the equations is symmetrical, no simpler tuning
procedure is available --- no variable can be tuned separately from the others,
and the resulting variable values do not form the same arithmetic pattern as the
target weights.
Also, let us point to a curious pattern for the numerical values of the tuning
parameters can be observed in~\cref{tab:numerical:cayley}.

\begin{table}
    \centering
    \begin{tabular}{c|c|c|c|c|c|c|c|c|c|c|c}
        &
        $ z $  &
        $ u_1 $ &
        $ u_2 $ &
        $ u_3 $ &
        $ u_4 $ &
        $ u_5 $ &
        $ u_6 $ &
        $ u_7 $ &
        $ u_8 $ &
        $ u_9 $ &
        $ u_{10} $
        \\
        \hline
        value &
        0.3 &
        0.009 &
        1.88 &
        1.37 &
        1.29 &
        1.26 &
        1.25 &
        1.24 &
        1.23 &
        1.23 &
        3.52
    \end{tabular}
    \caption{Numerical values for multi-partite rooted labelled
    trees parameters.}
    \label{tab:numerical:cayley}
\end{table}

\subsection{Otter trees}
Starting from the seminal paper of Otter~\cite{otter1948number}, unlabelled
tree-like structures play an important role in chemistry,
phylogenetics~\cite{penny1982testing} and synthetic
biology~\cite{fichtner2017tip}. Their study also helps to discover new methods
for numerical solution of partial differential equations, involving automatic
differentiation and construction of expression
trees~\cite{mullier2018validated}.

In this section, we consider a relatively simple example of rooted unlabelled
binary trees, such that the children of each node are not ordered. An additional
assumption that the leaves are coloured in \( d \) distinct colours gives the
following specification:
\begin{align}
    T(z, u_1, \ldots, u_d) =
    z \sum_{i = 1}^d u_i + \MSet\nolimits_2(T(z, u_1, \ldots, u_d)),
\end{align}
where the \( \MSet\nolimits_2\) operator is defined as
  \begin{equation}
    \MSet\nolimits_2(T(z, u_1, \ldots, u_d)) =
    \dfrac{T(z, u_1, \ldots, u_d)^2 + T(z^2, u_1^2, \ldots, u_d^2)}{2}.
  \end{equation}
Recall that the corresponding univariate model was considered
in~\cite{BodLumRolin} where the authors constructed a system of quadratic
equations which could be solved in reverse.
As in the previous example (see~\cref{section:cayley:trees}), we solve the
multi-parametric tuning problem when \( d = 10 \), and the expected colour
frequencies form an arithmetic progression \( 0.01, 0.03, \ldots, 0.19 \).

After setting an appropriate truncation level (the probability of having nodes
on a level \( h \) is exponentially decreasing in \( h \)), we solve the tuning
problem and generate the corresponding trees, see~\cref{fig:otter} for the
generated tree of size \( 1434 \), and~\cref{fig:otter:colours} for empirical
frequency histogram. The numerical values of the tuning parameters are given
in~\cref{tab:numerical:otter}.

\begin{figure}[!hbt]
    \RawFloats
\centering
\begin{minipage}[t]{0.5\textwidth}
    \includegraphics[width=\textwidth]{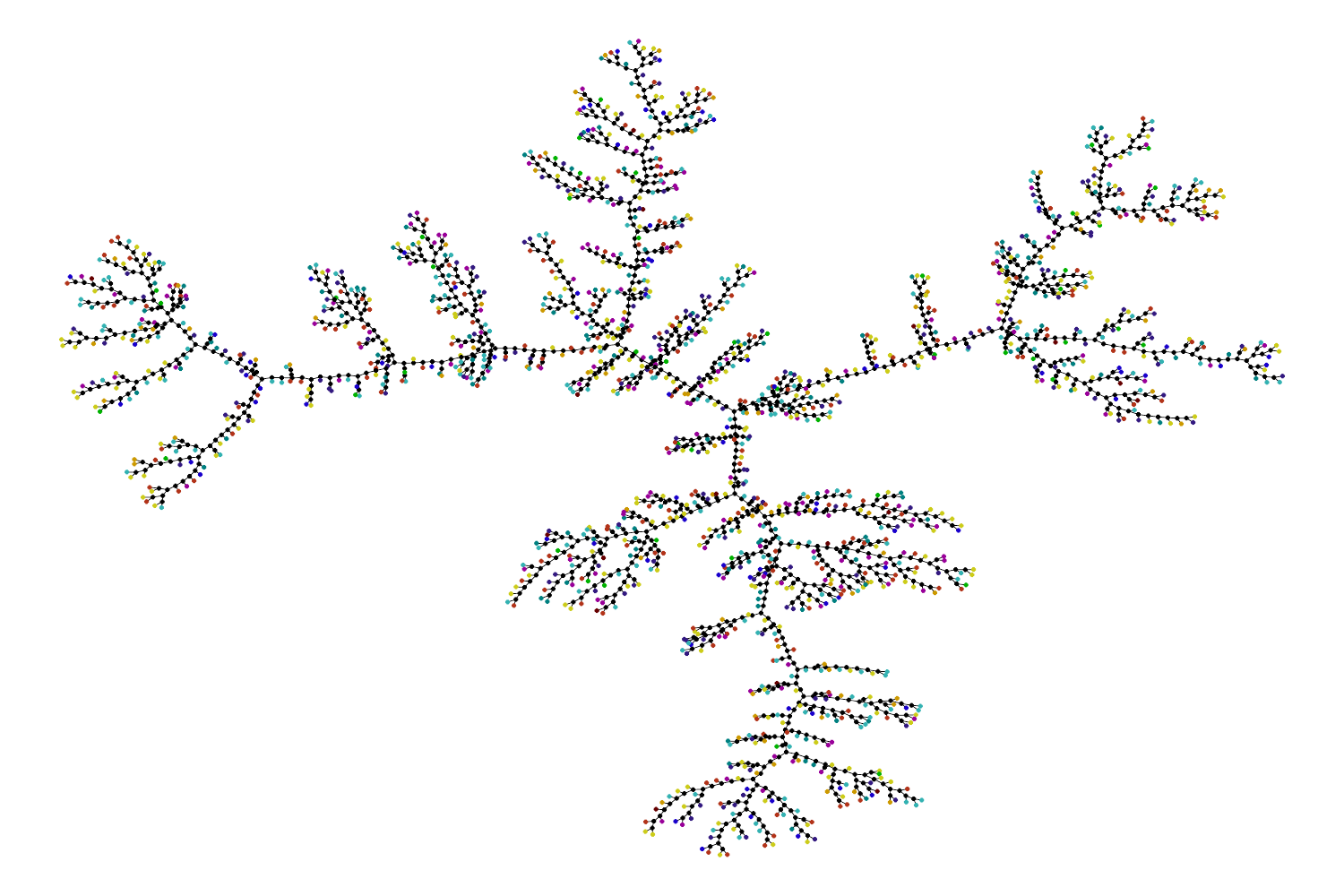}
    \caption{Random coloured Otter tree of size 1434 drawn with Kamada--Kawai
    algorithm~\cite{kamada1989algorithm}.}
    \label{fig:otter}
\end{minipage}%
\begin{minipage}[t]{0.5\textwidth}
    \includegraphics[width=\textwidth]{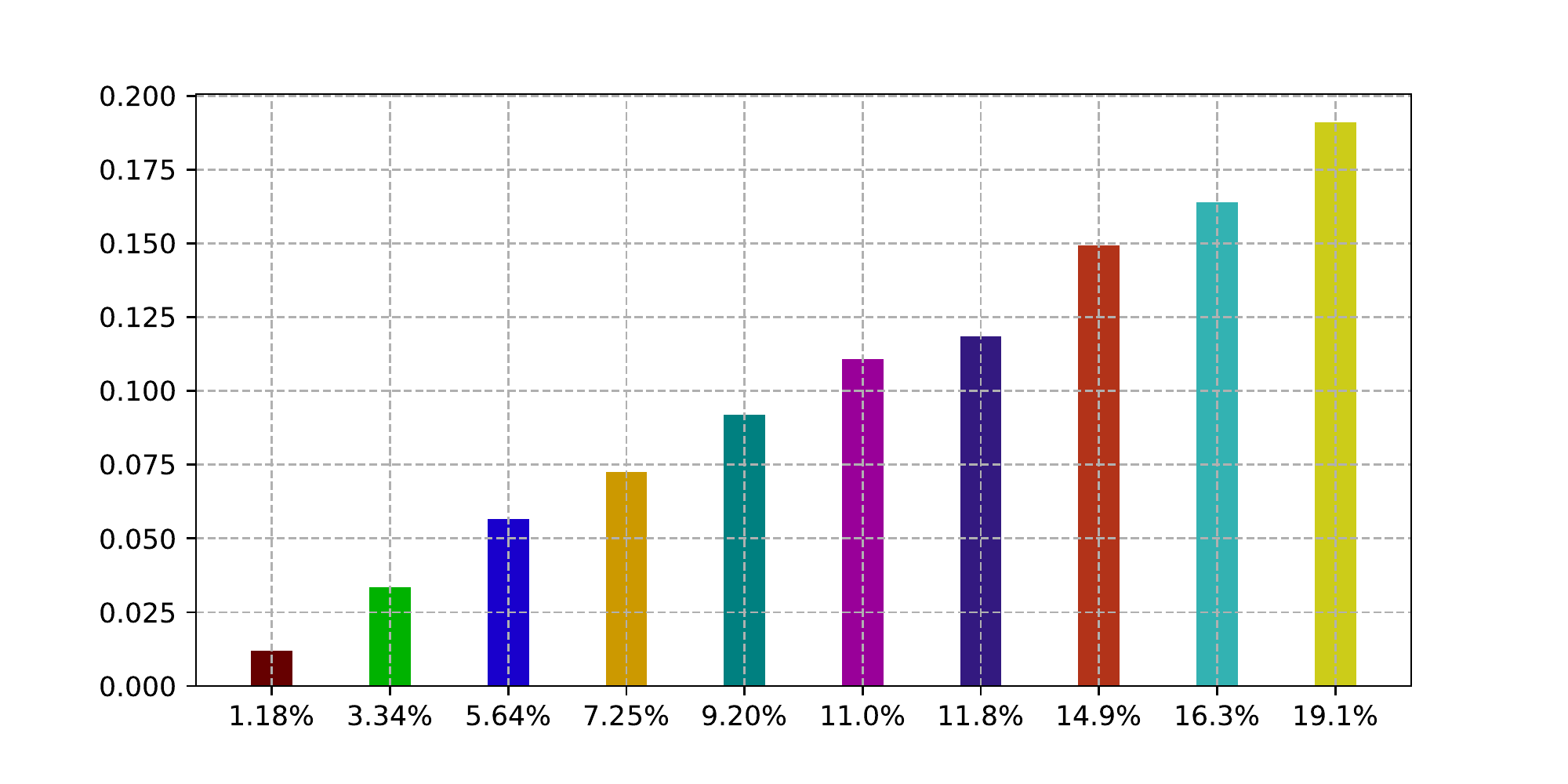}
    \caption{Histogram of colour frequencies for Otter tree.}
    \label{fig:otter:colours}
\end{minipage}
\end{figure}

\begin{table}
    \centering
    \begin{tabular}{c|c|c|c|c|c|c|c|c|c|c}
        &
        $ u_1 z $ &
        $ u_2 z $ &
        $ u_3 z $ &
        $ u_4 z $ &
        $ u_5 z $ &
        $ u_6 z $ &
        $ u_7 z $ &
        $ u_8 z $ &
        $ u_9 z $ &
        $ u_{10} z $
        \\
        \hline
        value &
        0.005 &
        0.015 &
        0.025 &
        0.035 &
        0.044 &
        0.054 &
        0.063 &
        0.072 &
        0.081 &
        0.09
    \end{tabular}
    \caption{Numerical values of tuning parameters for multicoloured Otter
    trees.}
    \label{tab:numerical:otter}
\end{table}

\subsection{Substitution-closed permutation classes}
Permutation patterns stem from a growing body of research which originated from
Knuth's study of sorting algorithms and also later from the study of sorting
networks by Tarjan and Pratt. Recently, the authors
of~\cite{bassino2017algorithm} have obtained a method which allows to
automatically construct a combinatorial specification for permutation classes
avoiding given set of permutations.

Let us start by giving a few basic definitions. A permutation \( \sigma =
(\sigma(1), \sigma(2), \ldots, \sigma(n)) \) is said to \emph{contain a pattern}
\( \pi = (\pi(1), \ldots, \pi(k)) \) if \( \sigma \) has a subsequence whose
terms have the same relative ordering as \( \pi \). A permutation of length \( n
\) is said to be \emph{simple} if it does not contain \emph{intervals} of length
strictly in between $1$ and $n$, where an \emph{interval} is a contiguous
sequence of indices \( \{ i \, \mid \, a \leq i \leq b\} \) such that the set of
values \( \{ \sigma(i) \, \mid \, a \leq i \leq b\} \) is also contiguous. And
so, for instance, the permutation from~\cref{fig:random:permutation} is not
simple, since it contains an interval \( \{ 1, 2, 3\} \) whose image is \(
\{5,6,7\} \).

\begin{figure}[!hbt]
    \begin{center}
        \includegraphics[width=0.4\textwidth]{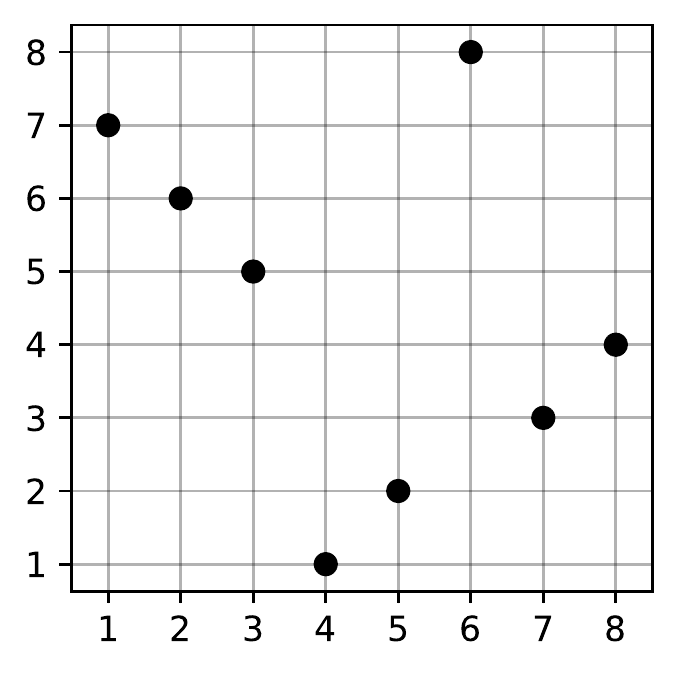}
    \end{center}
    \caption{Visualisation of a random permutation \textsc{76512834}.}
    \label{fig:random:permutation}
\end{figure}

Many interesting permutation classes can be described in the augmented language
of context-free specifications. Given a permutation \( \sigma \in S_m \) and
non-empty permutations \( \tau_1, \ldots, \tau_m \), the \emph{inflation} of \(
\sigma \) by \( \tau_1, \ldots, \tau_m \) is the permutation obtained by
replacing each entry \( \sigma(i) \) with an interval having the same relative
ordering as \( \tau_i \). If \( \tau_1, \ldots, \tau_m \) belong, respectively,
to the classes \( \CS T_1, \ldots, \CS T_m \), such an inflation is denoted as
\( \sigma[\CS T_1, \ldots, \CS T_m] \). While, from the counting point of view,
\( \sigma[\CS T_1, \ldots, \CS T_m] \) is isomorphic to a Cartesian product \(
\CS T_1 \times \cdots \times \CS T_m \), it is useful to explicitly keep the
external permutation \( \sigma \) for sampling and construction purposes.

Interestingly, Albert and Atkinson describe a specification for any
substitution-closed class of permutations~\cite{albert2005simple}.

\begin{proposition}[{\cite[Lemma 11]{albert2005simple}}]
    Suppose that a class \( \CS C \) is substitution-closed and contains \( 12
    \) and \( 21 \). Let \( \CS S \) be the class of all simple permutations
    contained in \( \CS C \). Then, \( \CS C \) satisfies the following system
    of equations
    \begin{align}
        \begin{split}
            \CS C &= \{ \bullet \} + 12[\CS C^+, \CS C] + 21[\CS C^-, \CS C]
            + \sum_{\pi \in \CS S} \pi[\CS C, \CS C, \ldots, \CS C] \\
            \CS C^+ &= \{ \bullet \} + 21[\CS C^-, \CS C]
            + \sum_{\pi \in \CS S} \pi[\CS C, \CS C, \ldots, \CS C] \\
            \CS C^- &= \{ \bullet \} + 12[\CS C^+, \CS C]
            + \sum_{\pi \in \CS S} \pi[\CS C, \CS C, \ldots, \CS C].
        \end{split}
    \end{align}
\end{proposition}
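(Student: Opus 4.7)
The plan is to derive the system of equations from the \emph{substitution decomposition theorem} for permutations: every permutation $\sigma$ of length $n \geq 2$ admits a representation $\sigma = \pi[\tau_1, \ldots, \tau_m]$ where $\pi$ is a simple permutation of length $m \geq 2$ and the $\tau_i$ are non-empty permutations. When $m \geq 4$ (equivalently, $\pi \notin \{12, 21\}$), this decomposition is unique. When $\pi = 12$ (respectively $\pi = 21$), uniqueness fails due to the associativity $12[12[\alpha,\beta],\gamma] = 12[\alpha, 12[\beta,\gamma]]$; it is restored by the standard convention that the \emph{leftmost} component $\tau_1$ must not itself be sum-decomposable (respectively, skew-decomposable).

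The second step is to identify the auxiliary classes with the appropriate indecomposability constraints. Inspecting the right-hand side of the equation for $\CS C$, one sees that the permutations in $\CS C^+$ are precisely those in $\CS C$ whose outer substitution decomposition is \emph{not} of the form $12[\alpha,\beta]$, \ie the sum-indecomposable elements of $\CS C$. Symmetrically, $\CS C^-$ collects the skew-indecomposable elements. Requiring the left argument of $12[\cdot,\cdot]$ to lie in $\CS C^+$ (and of $21[\cdot,\cdot]$ in $\CS C^-$) thus enforces exactly the uniqueness convention above.

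Given these identifications, the first equation follows by partitioning $\CS C$ according to the shape of the outer substitution decomposition: a permutation is either the single point $\bullet$, or an inflation $12[\CS C^+, \CS C]$, or $21[\CS C^-, \CS C]$, or $\pi[\CS C, \ldots, \CS C]$ with $\pi$ simple of length at least $4$. Substitution-closure of $\CS C$ ensures that the components of any such decomposition remain in $\CS C$ (so the right-hand side is well-defined), while $12, 21 \in \CS C$ guarantees that the $12$- and $21$-inflations are also allowed. The equations for $\CS C^\pm$ are obtained by the same partition, simply deleting the one offending summand: for $\CS C^+$ remove the $12[\CS C^+, \CS C]$ term (since sum-decomposables are excluded), and for $\CS C^-$ remove the $21[\CS C^-, \CS C]$ term.

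The main obstacle is the substitution decomposition theorem itself — more precisely, the proof that the outer simple permutation $\pi$ and the components $\tau_i$ are uniquely determined (up to the associativity issue for $\pi \in \{12,21\}$). This is the classical content of Albert--Atkinson~\cite{albert2005simple}, based on analysing the lattice of intervals of $\sigma$ and showing that the coarsest non-trivial interval partition yields the outer $\pi$. Once this uniqueness is granted, the combinatorial identity asserted by the proposition reduces to the bookkeeping described above, and the remainder of the argument is a straightforward disjoint union check.
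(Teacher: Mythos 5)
The paper does not prove this proposition---it is stated verbatim as a citation of \cite[Lemma~11]{albert2005simple}---so there is no in-paper argument to compare against. Your reconstruction is, however, an accurate account of what underlies the Albert--Atkinson lemma. You correctly isolate the substitution decomposition theorem as the load-bearing step (unique outer simple permutation $\pi$ of length $\geq 4$, with the associativity ambiguity for $\pi \in \{12,21\}$ resolved by forcing the leftmost component to be sum-, resp.\ skew-, indecomposable), and you correctly identify $\CS C^+$ with the sum-indecomposable members of $\CS C$ (including $\bullet$, which is vacuously indecomposable) and $\CS C^-$ with the skew-indecomposable ones, which is exactly why the left argument of $12[\cdot,\cdot]$ is constrained to $\CS C^+$ and that of $21[\cdot,\cdot]$ to $\CS C^-$. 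The partition of $\CS C$ into $\bullet$, $12$-inflations, $21$-inflations, and $\pi$-inflations for longer simple $\pi$, together with the closure under substitution (so that the components stay in $\CS C$) and the presence of $12, 21 \in \CS C$ (so that the inflation terms are nonempty), gives the first equation; deleting the single offending summand gives the equations for $\CS C^\pm$. This is exactly the intended derivation, and you are right that the hard content is the uniqueness of the substitution decomposition, which is the part proved in Albert--Atkinson (via the interval structure of permutations) and which neither the paper nor you re-derives.
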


It is, therefore, possible to endow each of the simple permutations \( \pi \in
\CS S \) by a distinguished marking variable \( u_{\pi} \) and insert them into
the specification:
\begin{align}
    \begin{split}
        \CS C &= \{ \bullet \} + 12[\CS C^+, \CS C] + 21[\CS C^-, \CS C]
        + \sum_{\pi \in \CS S} u_{\pi}\cdot \pi[\CS C, \CS C, \ldots, \CS C] \\
        \CS C^+ &= \{ \bullet \} + 21[\CS C^-, \CS C]
        + \sum_{\pi \in \CS S} u_{\pi}\cdot \pi[\CS C, \CS C, \ldots, \CS C] \\
        \CS C^- &= \{ \bullet \} + 12[\CS C^+, \CS C]
        + \sum_{\pi \in \CS S} u_{\pi}\cdot \pi[\CS C, \CS C, \ldots, \CS C].
    \end{split}
\end{align}

Finally, note that by tuning the expectations attached to \( u_{\pi} \) we alter
the expected frequencies of inflations used during the construction of a
permutation.

\section*{Acknowledgements}
We are grateful to Anne Bouillard, Matthieu Dien, Michael Grant, Hsien-Kuei
Hwang, \'Elie de Panafieu, Martin P\'epin, Bruno Salvy for their valueable
feedback and fruitful discussions. We are especially grateful to Cedric Chauve,
Yann Ponty, and Sebastian Will for showing us numerous applications of Boltzmann
sampling in biology, and to Sergey Tarasov for asking a question about
computational complexity of random sampling. We appreciate the anonymous referee's comments that helped to improve the paper and in particular a comment about the existence of the tuning problem solution which resulted in~\Cref{remark:non:tunable}.

\printbibliography

\end{document}